\RequirePackage{rotating}
\documentclass[11pt,a4paper]{amsart}
\usepackage[DIV14,paper=a4,BCOR14mm,headinclude]{typearea}
\usepackage[british]{babel}
\usepackage{amsmath, amssymb}
\usepackage{graphicx}
\usepackage{color}
\usepackage{tikz}
\usepackage{pgfplots}
\usepackage{pgfplotstable}
\usepackage{rotating}
\usepackage{lscape}
\usepackage{enumitem}
\usepackage{hyperref}
\usepackage{fp-eval}
\usepackage{esint}
\usepackage{subfigure}
\usepackage{caption}
\usepackage{todonotes}
\usepackage{mathabx}
\usepackage{booktabs}
\usepackage{ngerman}
\definecolor{darkblue}{rgb}{0,0,.7}
\hypersetup{colorlinks=true,linkcolor=darkblue,citecolor=darkblue}

\newlist{alphenum}{enumerate}{1}
\setlist[alphenum]{fullwidth,label={(\alph*)}}

\allowdisplaybreaks[0]
\setlength{\abovecaptionskip}{3pt}
\setlength{\belowcaptionskip}{3pt}

\theoremstyle{definition}
\newtheorem{theorem}{Theorem}[section]

\newtheorem{remark}[theorem]{Remark}
\newtheorem{lemma}[theorem]{Lemma}
\newtheorem{definition}[theorem]{Definition}

\numberwithin{figure}{section}
\numberwithin{table}{section}
\numberwithin{equation}{section}

\newcommand{\epstensor}{\boldsymbol{\epsilon}}

\newcommand{\bzero}{\mathbf{0}}

\newcommand{\ba}{\mathbf{a}}
\newcommand{\bfe}{\mathbf{f}}
\newcommand{\bg}{\mathbf{g}}
\newcommand{\bn}{\mathbf{n}}

\newcommand{\bu}{\mathbf{u}}
\newcommand{\bv}{\mathbf{v}}
\newcommand{\bw}{\mathbf{w}}
\newcommand{\bx}{\mathbf{x}}

\newcommand{\bH}{\mathbf{H}}
\newcommand{\bL}{\mathbf{L}}
\newcommand{\bV}{\mathbf{V}}

\newcommand{\vecb}[1]{\textbf{#1}}

\bibliographystyle{plain}

\begin{document}
\date{\today}

\title[A gradient-robust scheme]{A gradient-robust well-balanced
scheme for the compressible isothermal Stokes problem}

\author{M.~Akbas, T.~Gallou\"et, A.~Ga{\ss}mann, A.~Linke, C.~Merdon}
\maketitle

\begin{abstract}
A novel notion for constructing a well-balanced scheme
--- a gradient-robust scheme --- is introduced
and a showcase application for a steady compressible, isothermal
Stokes equations is presented.
Gradient-robustness means that
arbitrary gradient fields
in the momentum balance are
well-balanced by the discrete pressure gradient ---
if there is enough mass in the system to compensate the force.
The scheme is asymptotic-preserving in the sense
that it degenerates
for low Mach numbers to a recent inf-sup stable and
pressure-robust discretization for the incompressible Stokes
equations. The convergence
of the coupled FEM-FVM scheme for the nonlinear, isothermal Stokes
equations is proved by compactness arguments.
Numerical examples illustrate the numerical analysis, and show
that the novel approach
can lead to a dramatically increased accuracy in
nearly-hydrostatic low Mach number
flows. Numerical examples also suggest that a straight-forward extension to
 barotropic situations with nonlinear equations of state
 is feasible.
\end{abstract}

\section{Introduction}
In recent years, novel concepts and discretization approaches for the incompressible Navier-Stokes equations appeared around the so-called \textit{pressure-robustness} property. Such discretizations allow for a priori error estimates
of the discrete velocity that are independent of the pressure and the viscosity parameter that otherwise gives rise to a severe locking phenomenon \cite{JLMNR:sirev,cmame:linke:merdon:2016,
MR3767413,MR3824769,MR3780790,MR3767413,MR3743746,MR3826676,MR3833698,
MR3511719, MR2304270, MR3833698,
MR818790, MR2114637}
demonstrated in several benchmark examples \cite{JLMNR:sirev,cmame:linke:merdon:2016,MR3481034,MR3824769, gauger:linke:schroeder:2019}
even in coupled problems to simulate electrolyte flows \cite{WIASpreprint2525}.
Surprisingly, an astonishingly simple
modification  that
manipulates locally the
velocity test functions in the right-hand side (and the material derivative if present)
in order to restore the orthogonality between discretely divergence-free functions and gradients renders any classical inf-sup stable finite element method a pressure-robust method \cite{MR3133522,MR3824769,MR3743746,MR3656505}.

In this contribution we apply
this modification to a provably convergent discretization
of the compressible Stokes equations inspired by 
\cite{gal-09-conv,MR3749377}. The proposed modification
does not compromise the convergence analysis, but improves the
accuracy in nearly hydrostatic low Mach number flows.
Thereby, a novel notion for a certain
class of well-balanced schemes
--- {\em gradient-robust} schemes ---
for vector-valued partial differential equations
like the compressible Euler, the compressible Navier--Stokes
or the shallow-water equations
is introduced.

The notion \emph{gradient-robust} wants to emphasize that
the accuracy of these schemes does not suffer from the
appearance of dominant gradient fields in the momentum balance,
leading to an accurate, implicitly defined discrete vorticity equation
\cite{JLMNR:sirev}.
Indeed, several schemes for several
different \emph{vector PDEs} can be classified
as \emph{gradient-robust}, e.g., see
 \cite{cotter:thuburn:2014,
natale:cotter:2018, guzman:et:al:2017}. In the meteorology
community such schemes have been introduced
by Cotter and Thuburn and their well-balanced property
has been explained by \emph{exact sequences} in the
setting of finite element exterior calculus \cite{cotter:thuburn:2014}.
The proposed explanation for
the well-balanced property of these schemes
below is complementary,
but sets a different focus. Emphasizing the
importance of an 
$\bL^2$ orthogonality of certain (discretely) divergence
vector fields against (arbitrary) gradient fields for accuracy
reason allows to build novel \emph{gradient-robust} schemes.

In order to compare {\em gradient-robustness}
with classical well-balanced schemes \cite{XingShu2013, BottaKlein2004},
we regard the
{\em momentum balance of isothermal hydrostatics}
\begin{equation} \label{eq:example}
  (\rho \vecb{u})_t +
   \nabla \cdot \left ( \rho \vecb{u} \otimes \vecb{u}
     \right ) + \nabla p = -\rho \nabla \phi,
\end{equation}
for the compressible and the incompressible Euler equations
on a bounded polyhedral Lip\-schitz domain $\mathcal{D}$.
Here, the potential $\phi$ is assumed to depend on the space variable
$\vecb{x}$ only, i.e., we search for a steady density $\rho(\vecb{x})$
and pressure solutions $p(\vecb{x})$ fulfilling \eqref{eq:example}
with $\vecb{u} \equiv \vecb{0}$. The goal of the comparison
is to understand
the necessary properties for a space discretization that
discretely preserves incompressible or compressible {\em hydrostatics}.

In the incompressible case, it holds $\rho = \mathrm{const}$
and therefore one concludes for a hydrostatic balance
\begin{equation*}
 \nabla p = -\rho \nabla \phi = \nabla \left (-\rho \phi \right ),
\end{equation*}
i.e., the hydrostatic
pressure is given (up to a constant) by
$p = -\rho \phi + \mathrm{const}$. A consistent discretization
of incompressible hydrostatics requires to balance
{\em arbitrary gradient fields $\nabla (- \rho \phi)$} by the discrete pressure gradient,
i.e., a consistent, pressure-robust discretization
possesses an appropriately defined discrete Helmholtz projector
$\mathbb{P}_h$
--- an $\vecb{L}^2$ projector onto discretely divergence-free vector fields
--- whose kernel contains {\em arbitrary (!) gradient fields}
in $\vecb{L}^2$, i.e., it holds
\begin{equation}
  \mathbb{P}_h(-\rho \nabla \phi) = \vecb{0},
\end{equation}
for all $\phi \in \vecb{H}^1(\mathcal{D})$.
Pressure-robust schemes for incompressible flows achieve this goal
by exploiting the $\vecb{L}^2$-orthogonality
of vector-valued, {\em divergence-free} $H(\mathrm{div})$-conforming finite element test functions
(with vanishing normal component at the boundary)
against arbitrary gradient fields.
Thus, pressure-robust schemes can be constructed
on general unstructured grids. Note that most
classical finite element, finite volume and Discontinuous
Galerkin schemes are {\em not pressure-robust}.
The kernel of their discrete Helmholtz projectors
contains only a subspace of {\em discrete pressure gradients} \cite{cmame:linke:merdon:2016,JLMNR:sirev}.

Assuming for the isothermal ($T=\mathrm{const}$) compressible case an ideal gas law $p = \rho \mathcal{R} T$, the hydrostatic balance
is given by
\begin{equation*}
 \nabla p = -\rho \nabla \phi  \qquad \Leftrightarrow \qquad
 \nabla \rho = \rho \nabla \left (-\frac{\phi}{\mathcal{R} T} \right ),
\end{equation*}
which can be explicitely integrated, leading to
$$
  \rho = \rho_0 \exp \! \left ( \frac{-\phi}{\mathcal{R} T} \right ),
  \qquad 
  p = \rho_0 \mathcal{R} T \exp \! \left ( \frac{-\phi}{\mathcal{R} T} \right ).
$$
Exploiting this {\em explicit solution}, one confirms the identity
$$
  \mathcal{R} T \exp \! \left ( \frac{\phi}{\mathcal{R} T} \right )
    \nabla \left ( \exp \! \left ( \frac{-\phi}{\mathcal{R} T} \right)
     \right ) = -\nabla \phi
$$
and --- as an example ---
the classical well-balanced scheme \cite{XingShu2013}
is based on a space discretization
of the right hand side term in the form
\begin{equation} \label{eq:class:compr:wbs}
 -\rho \nabla \phi = 
   \rho_0 \mathcal{R} T  \, \nabla \left ( \exp \! \left ( \frac{-\phi}{\mathcal{R} T} \right)
     \right ).
\end{equation}

The key ideas of a {\em gradient-robust} well-balanced
scheme for compressible flows, which is based on the notion of pressure-robustness for incompressible flows, rely on the following observations:
\begin{enumerate}[fullwidth]
\item[(a)] A hydrostatic balance
$$
  \nabla p = -\rho \nabla \phi
$$
is only possible if $\rho \nabla \phi$ is a gradient-field,
which is only a-priori clear in the incompressible case $\rho=\mathrm{const}$; thus it seems to be plausible for variable $\rho$ 
that a more accurate treatment
of gradient forces may increase the overall accuracy of the scheme
in a nearly-hydrostatic situation.
\item[(b)] Actually, we demonstrate in this contribution that
it is possible to construct {\em gradient-robust schemes} on arbitrary unstructured grids, which allow a well-balanced property
of the form
$$
 \nabla p = -\nabla \psi
$$
for arbitrary gradient fields $\nabla \psi \in \vecb{L}^2(\mathcal{D})$ --- if there is enough mass in the system to compensate the gradient force $-\nabla \psi$.
\item[(c)] The velocity field of nearly-hydrostatic flows is
of {\em low Mach number} type. Thus,
its non-divergence-free part is small, i.e., of order
$\mathcal{O}(\frac{1}{\mathrm{Ma}^2})$, and
for an accurate treatment in nearly-hydrostatic,
low Mach number flows it suffices
to achieve that the {\em divergence-free part} of the velocity
field vanishes in the hydrostatic limit case.
Exactly this is achieved in this contribution for
the barotropic compressible Stokes equations, since it is
shown that the divergence-free part of the velocity fulfills
the {\em incompressible Stokes equations} --- where
(incompressible)
{\em pressure-robustness} can be exploited.
\end{enumerate}

Although, this paper develops an appropriate space discretization for the
rather simple compressible Stokes equations, it is a first step to
develop and analyze {\em gradient-robust} schemes for the full compressible Navier--Stokes equations. Nevertheless, even this
simple physical problem is highly relevant in atmospheric and oceanic modeling where the correct representation of the hydrostatic balance
between pressure gradient and gravity is of vital importance in
{\em stably stratified airflows over a topography}.
It has been early recognized that discretization errors especially in terrain-following coordinates --- leading to rather structured grids ---
can become large and deteriorate the accuracy of the numerical solutions. Such errors are especially severe, if a resting fluid is located over steep terrain \cite{zaengl2004}. Several attempts were made over the years to ameliorate the simulations. Proposed methods are the increase of the order of accuracy \cite{ENGWIRDA20171,Zaengl2012,Mahrer1984}, the improvement of the lower boundary condition \cite{Gassmann2004}, the usage of cut cells or step mountain coordinates instead of the terrain following coordinates \cite{Steppeler2019,SteppelerMinotte2002,ShawWeller2016}, the reduction of the steepness of the slopes \cite{Schaer2002}, the damping of error induced noise \cite{Zaengl2002}, covariant formulations of the pressure gradient term \cite{Lin1997,LiLiWang2016}, curl-free pressure gradient formulations \cite{WellerShahrokhi2014}, and energy conserving schemes (discrete Poisson brackets) that hinder the spurious increase of kinetic energy in the perturbations \cite{Gassmann2013}. These references are just to be thought
to reflect the importance of the problem in applications.

The rest of the paper is structured as follows. Section~\ref{sec:cStokes} introduces
the steady, compressible isothermal Stokes equations, which
serve as a model problem.
Section~\ref{sec:discrete_scheme} explains our discretization, in particular the
finite-volume scheme for the continuity equation and the finite element scheme for the momentum equation with the gradient-robust right-hand side modification.
Section~\ref{sec:gradient_robustness} motivates and discusses the new gradient-robustness property and links it to pressure-robustness in the incompressible setting or the
well-balanced known from shallow water equations.
Section~\ref{sec:existence} proves the existence of a discrete solution by standard compactness
arguments, while Section~\ref{sec:convergence} proves the convergence of a series
of discrete solutions to a weak solution of the continuous system.
In Section~\ref{sec:numerics} the theoretical findings are validated by appropriate numerical benchmarks.

\section{A model problem: the steady compressible isothermal Stokes equations} \label{sec:cStokes}
The isothermal compressible Stokes problem seeks
for $(\bfe, \bg) \in \bL^2(\Omega) \times \bL^\infty(\Omega)$
some velocity field \(\vecb{u}\), pressure \(p\) and non-negative density \(\varrho \geq 0\) with \(\int_\Omega \varrho \, dx = M\) such that
\begin{equation} \label{eq:compr:stokes:problem}
\begin{split}
  -\nabla \cdot \boldsymbol{\sigma} + \nabla p & = \vecb{f} + \varrho \vecb{g},\\
  \mathrm{div}(\varrho \bu) & = 0\\
  p & = \varphi(\rho) := c \rho,
\end{split}
\end{equation}
where friction is modeled as in linear elasticity
by
\begin{equation}
  \boldsymbol{\sigma} =  2 \mu \epstensor(\bu) + \lambda (\nabla \cdot \bu) \boldsymbol{I},
\end{equation}
with
$\epstensor(\bu) := \frac{1}{2} (\nabla \bu + (\nabla \bu)^T)$, $\mu \in \mathbb{R}^+$,
$\lambda \in \mathbb{R}$ with $\lambda \geq \underline{\lambda}  > -2 \mu$, compare e.g.\ with \cite{feireisl:2010},
and where the equation of state function \(\varphi:= c \rho\)
with $c > 0$ is prescribed in addition to homogeneous
Dirichlet velocity boundary conditions to close the system. Note, that the constant \(c\) may model (in a dimensionless setting) the squared inverse of the Mach number.

The compressible Stokes problem is thus a nonlinear problem
and can be written in the following weak form \cite{MR2600538}:
search for
 \((\vecb{u},p,\varrho) \in \vecb{H}^1_0(\Omega) \times L^2(\Omega) \times L^{2}(\Omega)\) with
\begin{align}\label{eqn:compressible_weak}
  a_1(\vecb{u},\vecb{v}) + a_2(\vecb{u},\vecb{v}) + b(p,\vecb{v}) & = F(\vecb{v}) + G(\varrho, \vecb{v}) && \text{for all } \vecb{v} \in \vecb{H}^1_0(\Omega),\\\nonumber
  c(\varrho,u,\phi) & = 0 && \text{for all } \phi \in W^{1,\infty}(\Omega),
\end{align}
where the multilinear forms used above read as
\begin{align*}
  a_1(\vecb{u},\vecb{v}) & := 2 \mu \int_\Omega \epstensor(\bu) : \epstensor(\bv) \, dx,
  & a_2(\vecb{u},\vecb{v}) & := \lambda \int_\Omega \mathrm{div}(\vecb{u}) \mathrm{div}(\vecb{v}),\\
  b(p,\vecb{u}) & := -\int_\Omega p \, \mathrm{div}(\vecb{u}) \, dx,
  & c(\varrho, \bu,\phi) & := \int_\Omega \varrho \bu \cdot \nabla \phi \, \, dx,\\
  F(\vecb{v}) & := \int_\Omega \vecb{f} \cdot \vecb{v} \, dx,
  & G(\varrho,\vecb{v}) & := \int_\Omega \varrho \vecb{g} \cdot \vecb{v} \, dx.
\end{align*}

Qualitative properties of the velocity solution
$\bu$ can be investigated by introducing the spaces
\begin{align}
  \bV^0 & = \{ \bv \in \bH^1_0(\Omega)  : \nabla \cdot \bv = 0\} \nonumber\\
 \bV^\perp & = \{ \bv \in \bH^1_0(\Omega) : (\epstensor(\bv), \epstensor(\bw)) = 0
  \quad \text{for all $\bw \in \bV^0$}
   \}  \label{eqn:def_Vperp}
\end{align}
and with the help of the orthogonal splitting
--- in the scalar product $(\epstensor(\bullet), \epstensor(\bullet))$ ---
\begin{equation*}
  \bu = \bu^0 + \bu^\perp
\end{equation*}
with $\bu^0 \in \bV^0$ and $\bu^\perp \in \bV^\perp$.
Testing the equation with an arbitrary $\bv^0 \in \bV^0$
one recognizes that it holds
\begin{equation} \label{eq:u0}
 2\mu (\epstensor(\bu^0), \epstensor(\bv^0))
   = (\vecb{f} + \rho \vecb{g}, \bv^0)
\end{equation}
for all $\bv^0 \in \bV^0$. Thus, for fixed $\rho$ or for
$\vecb{g} = \vecb{0}$ the
\emph{divergence-free part $\bu^0$ }of the solution $\bu$
fulfills a \emph{linear incompressible Stokes equations}.
Moreover, introducing the space
\begin{equation}
  \bL^2_\sigma = \{ \bv \in \bL^2(\Omega) :  (\bv, \nabla \phi) = 0 \, \text{for all $\phi \in H^1(\Omega)$} \},
\end{equation}
one obtains the orthogonal decomposition
\begin{equation} \label{eq:helmholtz:hodge:decomp}
  \bL^2(\Omega) = \bL^2_\sigma \oplus_{\bL^2}
   \{ \nabla \phi : \phi \in H^1(\Omega) \}
\end{equation}
and $\bL^2_\sigma$ represents the space of \emph{weakly divergence-free} $\bL^2$ vector fields with vanishing normal component
at the boundary \cite{JLMNR:sirev}.
Exploiting the Helmholtz--Hodge decomposition
\eqref{eq:helmholtz:hodge:decomp}, one can introduce
the $\bL^2$-orthogonal Helmholtz--Hodge projector
$\mathbb{P}: \bL^2(\Omega) \to \bL^2_\sigma$ 
of a vector field $\vecb{f} \in \bL^2(\Omega)$ by
$$
 (\mathbb{P}(\vecb{f}), \bw) = (\vecb{f}, \bw) \qquad \text{for
 all $\bw \in \bL^2_\sigma$},
$$
see  \cite{JLMNR:sirev}. Then, due to the orthogonal decomposition \eqref{eq:helmholtz:hodge:decomp} one obtains:
\begin{lemma} \label{lem:helm:consistency}
For all $\phi \in H^1(\Omega)$ it holds
$$
  \mathbb{P}(\nabla \phi) = \bzero.
$$
\end{lemma}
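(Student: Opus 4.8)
The plan is to show that the zero field satisfies the defining variational equation of $\mathbb{P}(\nabla\phi)$ and then invoke uniqueness of the projector. Recall that $\mathbb{P}(\nabla\phi)$ is, by definition, the unique element $\bw^\ast \in \bL^2_\sigma$ with $(\bw^\ast, \bw) = (\nabla\phi, \bw)$ for all $\bw \in \bL^2_\sigma$; uniqueness is guaranteed by the orthogonal Helmholtz--Hodge decomposition \eqref{eq:helmholtz:hodge:decomp} established above. Thus it suffices to verify that $\bw^\ast = \bzero$ is an admissible choice.

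The key step is to test against an arbitrary $\bw \in \bL^2_\sigma$ and read off both sides. On the one hand, $(\bzero, \bw) = 0$. On the other hand, by the very definition of the space $\bL^2_\sigma$ as the $\bL^2$-orthogonal complement of all gradient fields, every $\bw \in \bL^2_\sigma$ satisfies $(\bw, \nabla\psi) = 0$ for all $\psi \in H^1(\Omega)$; specializing to $\psi = \phi$ gives $(\nabla\phi, \bw) = 0$. Hence $(\bzero, \bw) = (\nabla\phi, \bw)$ holds for every test field in $\bL^2_\sigma$, and since $\bzero \in \bL^2_\sigma$ trivially, the uniqueness of the projector forces $\mathbb{P}(\nabla\phi) = \bzero$.

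There is essentially no obstacle here: the statement is a direct unwinding of the definition of $\bL^2_\sigma$ together with the construction of $\mathbb{P}$, and it can equivalently be phrased by observing that $\nabla\phi$ lies entirely in the gradient summand of \eqref{eq:helmholtz:hodge:decomp}, so that its $\bL^2_\sigma$-component vanishes by orthogonality. The only point worth stating explicitly is that the projector is well defined in the first place, which is exactly what the orthogonality of the decomposition supplies; no regularity beyond $\phi \in H^1(\Omega)$ is needed, since that is precisely the class of potentials used in the definition of $\bL^2_\sigma$.
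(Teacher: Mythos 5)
Your proof is correct and follows exactly the route the paper intends: the paper states the lemma as an immediate consequence of the orthogonal decomposition \eqref{eq:helmholtz:hodge:decomp} without writing out a proof, and your argument simply makes that explicit by noting that $(\nabla\phi,\bw)=0$ for every $\bw\in\bL^2_\sigma$ by the very definition of $\bL^2_\sigma$, so $\bzero$ satisfies the defining variational identity of $\mathbb{P}(\nabla\phi)$.
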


Using the concept of the Helmholtz--Hodge projector, one
can refine \eqref{eq:u0} to observe
\begin{equation} \label{eq:u0:helm}
\mu (\epstensor(\bu^0), \epstensor(\bv^0))
   = (\mathbb{P}(\vecb{f} + \rho \vecb{g}), \bv^0)
\end{equation}
for all $\bv^0 \in \bV^0$, i.e., the divergence-free
part $\bu^0$ does not depend on the entire data
$\vecb{f} + \rho \vecb{g}$, but only on its divergence-free part $\mathbb{P}(\vecb{f} + \rho \vecb{g})$.

\section{Well-balanced Bernardi--Raugel finite element - finite volume method}\label{sec:discrete_scheme}
%
The proposed discretization is based on the finite element-finite volume scheme of \cite{gal-09-conv}. Here, the continuity equation is discretized by some finite volume technique that ensures the non-negativity and mass constraints of the piecewise-constant discrete density \(\varrho_h\).

For the velocity the classical $\bH^1$-conforming
Bernardi--Raugel finite element method is employed ---
instead of the \emph{nonconforming Crouzeix--Raviart} element
used in \cite{gal-09-conv,MR3749377}.
This has several advantages: First, the conforming method
is cheaper in terms of the number of degrees of freedom. Second, it easily
allows for the use of the stress tensor $\boldsymbol{\sigma}$, whereas the
Crouzeix--Raviart element does not fulfill a \emph{discrete
Korn inequality}. Third, the conforming setting makes
some of the compactness arguments easier, in order to prove
convergence to a weak solution of this nonlinear problem,
without resorting to additional stability terms.

However, the main important difference to the scheme
\cite{gal-09-conv} is a modified discretization
of the right-hand side $\vecb{f} + \rho \vecb{g}$ that
delivers more accurate results in nearly hydrostatic
situations.
The modification is inspired by
certain \emph{pressure-robust schemes} for the incompressible Stokes equations, see e.g.\ \cite{JLMNR:sirev,cmame:linke:merdon:2016}.
Fundamental is an appropriate discrete equivalent of \eqref{eq:u0:helm},
where the discretely divergence-free part $\bu^0_h$
of the discrete solution $\bu_h$ does only depend
on the \emph{continuous} Helmholtz--Hodge projector
$\mathbb{P}(\vecb{f} + \rho_h \vecb{g})$.

\subsection{Notation}
Consider a shape-regular triangulation \(\mathcal{T}\) with nodes \(\mathcal{N}\) and
faces \(\mathcal{F}\). The subset \(\mathcal{F}(\Omega)\) denotes the interior faces of the triangulation.
The set \(P_k(T)\) consists of all scalar-valued polynomials of total degree \(k\) on the simplex \(T \in \mathcal{T}\). Moreover, the set of piecewise polynomials is denoted by
\begin{align*}
  P_k(\mathcal{T}) := \lbrace v_h \in L^2(\Omega) : v_h|_T \in P_k(T) \text{ for all } T \in \mathcal{T} \rbrace.
\end{align*}
Vector-valued quantities or functions are addressed by bold letters.

\subsection{Finite Element Method and a divergence-free
reconstruction operator}
The numerical discretization employs the Bernardi--Raugel finite element spaces
\begin{align*}
  \vecb{V}_h := \left(\vecb{P}_1(\mathcal{T}) \cap \vecb{H}^1_0(\Omega)\right) \oplus \mathcal{B}(\mathcal{F}(\Omega)) 
  \quad \text{and} \quad 
  Q_h := P_0(\mathcal{T}) \cap L^2_0(\Omega),
\end{align*}
where \(\mathcal{B}(\mathcal{F}(\Omega)) \) denotes the normal-weighted face bubbles, i.e.
\begin{align*}
  \mathcal{B}(\mathcal{F}(\Omega)) 
  := \lbrace b_F \vecb{n}_F : F \in \mathcal{F}(\Omega) \rbrace.
\end{align*}
For $d=2$, \(b_F\) is the standard quadratic face bubble on the face \(F \in \mathcal{F}\). For $d=3$, the
corresponding standard face bubble is cubic.
The $L^2$ projection in the discrete pressure $Q_h$ will be
denoted in the following by $\pi_0$.

Then, the \emph{discrete divergence} operator 
$\mathrm{div}_h: \bV_h \to Q_h$ of the Bernardi--Raugel element is denoted by
\begin{equation}
  \mathrm{div}_h(\bv_h) := \pi_0 (\mathrm{div} \, \bv_h).
\end{equation}
 Note that the Bernardi--Raugel element
is \emph{discretely inf-sup stable} on shape-regular meshes
\cite{GirRav-nse}. The space of \emph{discretely divergence-free} vector fields will be denoted as
\begin{equation}
  \bV^0_h = \{  \bv_h \in \bV_h : \mathrm{div}_h \bv_h = 0 \}.
\end{equation}
Due to the general theory of mixed finite element spaces
\cite{GirRav-nse},
the space of discretely divergence-free Bernardi--Raugel
functions has optimal approximation properties versus
$\bV^0$. More precisely, it holds for all $\bv^0 \in \bV^0$
that
\begin{equation}
  \inf_{\bv^0_h \in \bV^0_h} \| \nabla(\bv^0 - \bv^0_h) \| \leq (1 + C_F) \inf_{\bv_h \in \bV_h} \| \nabla (\bv^0 - \bv_h) \|,
\end{equation}
where $C_F$ denotes the (uniformly bounded)
stability constant of the corresponding
\emph{Fortin operator} \cite{GirRav-nse}
of the Bernardi--Raugel element.

The gradient-robust modification of the Bernardi--Raugel finite element method employs
a reconstruction operator \(\Pi\) in the right-hand side functionals, which maps
\emph{discretely divergence-free functions} onto
\emph{weakly divergence-free} ones
in the sense of $\bL^2_\sigma$ \cite{cmame:linke:merdon:2016}.
For the Bernardi--Raugel finite element method, this can be
ensured by standard interpolators into
either the Raviart--Thomas $\mathrm{RT}_0$
or the Brezzi--Douglas--Marini \(\mathrm{BDM}_1\)
finite element spaces \cite{brezzi:fortin}.
Here, we employ the Brezzi--Douglas--Marini standard \(\mathrm{BDM}_1\) standard interpolator defined by
\begin{align*}
  \int_F q_h (\Pi \vecb{v}_h - \vecb{v}_h) \cdot \vecb{n}_F \, ds & =0 \quad \text{for all } q_h \in P_1(F) \text{ and } F \in \mathcal{F}.
\end{align*}
Also note, that \(\Pi(\vecb{v}_h) = \vecb{v}_h\) whenever \(\vecb{v}_h \in \vecb{P}_1(\mathcal{T}) \cap \vecb{H}^1_0\), hence only face bubbles are modified by the reconstruction operator (and their reconstruction equals their $\mathrm{RT}_0\) standard interpolation).
\begin{remark}
In order to give an impression how the proposed space
discretization can actually be implemented, we describe
the discretization variant with the Raviart--Thomas
standard interpolator in detail, although we will not
use this slightly less accurate variant in our numerical
experiments.

The Raviart--Thomas standard interpolator can be
elementwise defined in an explicit way by
\begin{equation}
 \Pi^{\mathrm{RT}_0}(\bv_h)_{| T}= \ba_T + \frac{c_T}{d} \left ( \bx - \bx_T \right ),
\end{equation}
where $\bx_T$ denotes the barycenter of the element $T$,
$c_T$ denotes the elementwise divergence computable by
\begin{equation} \label{eq:rt0:div}
  c_T := \frac{1}{|T|} \sum_{F \in \mathcal{F}(T)}
   \int_F \bv_h \cdot \bn_F \, \mathit{dS}
\end{equation}
and $\ba_T$ denotes the \emph{average velocity} computable by
$$
  \ba_T := \frac{1}{|T|} \sum_{F \in \mathcal{F}(T)}
    \left ( \int_F \bv_h \cdot \bn_F \, \mathit{dS} \right )
    (\bx_F - \bx_T),
$$
where $\bx_F$ denotes a face barycenter of the face $F$.
\end{remark}
 The following lemma collects some more important properties.

\begin{lemma}[Properties of \(\Pi\)]
  It holds, for all \(\vecb{v} \in \vecb{H}^1_0(\Omega)\),
  \begin{align}
    \mathrm{div}(\Pi \vecb{v}) & = \mathrm{div}_h
    \bv, \label{eq:pi:prop:1} \\
    \| \vecb{u} - \Pi \vecb{v} \|_{L^2(T)} & \leq  h_T \|\nabla \vecb{v} \|_{L^2(T)} \quad \text{for all } T \in \mathcal{T},
    \label{eq:pi:prop:2} \\
    \int_\Omega \nabla \phi \cdot (\Pi \vecb{v}) \, dx & =  - \int_\Omega \phi \, \mathrm{div}_h  \bv \, dx \quad \text{for all } \phi \in H^1(\Omega). \label{eq:pi:prop:3}
  \end{align}
\end{lemma}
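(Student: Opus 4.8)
The plan is to prove \eqref{eq:pi:prop:1} first, since it is the structural identity from which \eqref{eq:pi:prop:3} follows almost immediately, and to handle the interpolation estimate \eqref{eq:pi:prop:2} separately by a scaling argument. Throughout I would exploit two standard features of the $\BDM_1$ interpolator on a simplex $T$: its range lies in $\vecb{P}_1(T)$, so that $\mathrm{div}(\Pi\vecb{v})$ is constant on each cell; and the face functionals $\vecb{w}\mapsto\int_F q_h\,\vecb{w}\cdot\vecb{n}_F\ds$ (with $q_h\in P_1(F)$) are unisolvent on $\vecb{P}_1(T)$, whence $\Pi\vecb{p}=\vecb{p}$ on $T$ for every local linear field $\vecb{p}\in\vecb{P}_1(T)$.

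For \eqref{eq:pi:prop:1} I would fix $T\in\T$ and apply the divergence theorem twice:
\begin{equation*}
 \int_T \mathrm{div}(\Pi\vecb{v})\dx = \sum_{F\in\F(T)}\int_F \Pi\vecb{v}\cdot\vecb{n}_F\ds
  = \sum_{F\in\F(T)}\int_F \vecb{v}\cdot\vecb{n}_F\ds = \int_T \mathrm{div}\,\vecb{v}\dx,
\end{equation*}
where the middle equality uses the admissible test function $q_h\equiv 1\in P_1(F)$ in the defining relation of $\Pi$ (with the same face normal $\vecb{n}_F$ on both sides, so the orientation is irrelevant). Since $\mathrm{div}(\Pi\vecb{v})$ is constant on $T$, dividing by $|T|$ identifies it with the cell mean $\pi_0(\mathrm{div}\,\vecb{v})|_T=\mathrm{div}_h\vecb{v}|_T$, which is \eqref{eq:pi:prop:1}.

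Property \eqref{eq:pi:prop:3} I would then obtain by one global integration by parts. Because neighbouring cells share the same face degrees of freedom, $\Pi\vecb{v}$ has continuous normal component across interior faces, so $\Pi\vecb{v}\in\Hdiv$; and since $\vecb{v}\cdot\vecb{n}=0$ on $\partial\Omega$, the boundary degrees of freedom vanish and with them the normal trace of $\Pi\vecb{v}$ on $\partial\Omega$. Testing with $\phi\in H^1(\Omega)$ the boundary term therefore drops, and inserting \eqref{eq:pi:prop:1} yields $\int_\Omega \nabla\phi\cdot\Pi\vecb{v}\dx = -\int_\Omega \phi\,\mathrm{div}(\Pi\vecb{v})\dx = -\int_\Omega \phi\,\mathrm{div}_h\vecb{v}\dx$.

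For \eqref{eq:pi:prop:2} I would subtract a local linear best approximation $\vecb{p}\in\vecb{P}_1(T)$ and use $\Pi\vecb{p}=\vecb{p}$ to write $\vecb{v}-\Pi\vecb{v}=(\vecb{v}-\vecb{p})-\Pi(\vecb{v}-\vecb{p})$ on $T$, then combine the local $L^2$-stability of $\Pi$ with a Bramble--Hilbert/Poincar\'e estimate. Here the difficulty is quantitative rather than conceptual: the clean factor $h_T$ with unit constant requires a careful reference-element computation of the interpolation norm (exploiting that in fact only the face-bubble contribution is altered by $\Pi$), whereas \eqref{eq:pi:prop:1} and \eqref{eq:pi:prop:3} are exact identities resting on the divergence theorem alone. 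I therefore expect \eqref{eq:pi:prop:2} to be the only step requiring genuine estimation work.
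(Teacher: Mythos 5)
Your proposal is correct and follows essentially the same route as the paper: the paper simply cites the standard properties of the $\mathrm{BDM}_1$ (and $\mathrm{RT}_0$) interpolation from Brezzi--Fortin for \eqref{eq:pi:prop:1} and \eqref{eq:pi:prop:2}, and derives \eqref{eq:pi:prop:3} by integration by parts combined with \eqref{eq:pi:prop:1}, which is exactly your argument with the standard facts (commuting-diagram/divergence-theorem identity, $\Hdiv$-conformity and vanishing normal trace, scaling estimate) written out in detail. Your caveat that the unit constant in \eqref{eq:pi:prop:2} requires a reference-element computation is fair, but the paper does not address this either and simply defers to the literature.
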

\begin{proof}
  The properties \eqref{eq:pi:prop:1} and \eqref{eq:pi:prop:2} follow from the properties of the standard interpolation into the spaces \(\mathrm{BDM}_1\)
  (and $\mathrm{RT}_0$), see e.g.\ \cite{brezzi:fortin}.
   Property \eqref{eq:pi:prop:3} follows from an integration by parts and property \eqref{eq:pi:prop:1}.
\end{proof}
\begin{remark}
The cornerstone of the novel gradient-robust scheme
is given by the following statement:
for all \emph{discretely-divergence-free} Bernardi--Raugel
vector fields $\bv^0_h \in \bV^0_h$ and all $\phi \in H^1(\Omega)$ it holds
$$
  \int_\Omega \nabla \phi \cdot (\Pi \bv^0_h) \, dx
  =  - \int_\Omega \phi \, \mathrm{div}_h  \bv^0_h \, dx
  = 0,
$$
i.e., the reconstruction operator $\Pi$
enables to repair the $\bL^2$ orthogonality
of \emph{discretely divergence-free} vector fields
and arbitrary gradient fields $\nabla \phi$.
\end{remark}

\subsection{Coupling to finite volume upwind discretization of continuity equation}\label{subsection:pseudo:time:stepping}

This finite element scheme is coupled to a finite volume discretization for the continuity equation. Altogether, our discretization seeks some \((\vecb{u}_h,p_h,\varrho_h) \in \vecb{V}_h \times Q_h \times Q_h\) such that
\begin{align}
  a_1(\vecb{u}_h,\vecb{v}_h) + a_{2}(\Pi \vecb{u}_h,\Pi \vecb{v}_h) + b(p_h,\vecb{v}_h)& = F(\Pi \vecb{v}_h) + G(\varrho_h, \Pi \vecb{v}_h)  && \text{for all } \vecb{v}_h \in \vecb{V}_h,
  \label{eq:gradient:robust:scheme} \\
    \mathrm{div}_\text{upw}(\varrho_h \vecb{u}_h) & = 0, &&
    \nonumber \\
    p_h & = \varphi(\varrho_h). && \nonumber
\end{align}

The upwind discretization \(\mathrm{div}_\text{upw}(\varrho_h \vecb{u}_h) \in P_0(\mathcal{T})\) of \(\mathrm{div}(\varrho_h \vecb{u}_h)\) is defined
on all $T \in \mathcal{T}$ by
\begin{align*}
  \mathrm{div}_\text{upw}(\varrho_h \vecb{u}_h)|_T
  & :=
  \frac{1}{|T|} \sum_{F \in \mathcal{F}(T) \cap \mathcal{F}(L)} u_{T,F}^+ \varrho_h|_T - u_{T,F}^-\varrho_h|_L \\
 &  = \frac{1}{|T|}
  \sum_{F \in \mathcal{F}(T) \cap \mathcal{F}(L)} \varrho^\text{upw}_F u_{T,F},
\end{align*}
where \(u_{T,F} = \int_F \vecb{u}_h \cdot \vecb{n}_T \, ds\) is the integral over
the face \(F\) in outer normal direction of the simplex \(T\) and \(u_{K,F}^+ \geq 0\)
and \(u_{T,F}^- \geq 0\) is the positive and negative part, respectively. Hence,
\(\varrho^\text{upw}_F := \varrho_h|_T\) if \(u_{T,F} > 0\) and \(\varrho^\text{upw}_F := \varrho_h|_L\)
else for \(F = \partial T \cap \partial L\).

The introduction of the upwind divergence leads to a (singular) matrix
\begin{align}\label{eqn:definition_matrixD}
 \mathrm{div}_\text{upw}(\varrho_h \vecb{u}_h) = 0 \quad \Leftrightarrow \quad D \varrho_h = 0
  \quad \text{where} \quad D_{jk} := \mathrm{div}_\text{upw}(\chi_j \vecb{u}_h)|_{T_k}
\end{align}
  where \(\chi_j\) is the characteristic function of \(T_j \in \mathcal{T}\).
  
\begin{lemma}[Properties of $D$]
  It holds
\begin{align*}
  & (1) & D \text{ is } & \text{weakly diagonal-dominant,i.e. } \\
  && D_{jj} &\geq 0 
  \quad \text{and} \quad
  \sum_k D_{jk} = 0 \quad \text{for all } j=1,\ldots,\mathcal{T},\\
  & (2) & D^T \vecb{1} &= 0,\\
  & (3) & D \vecb{1} & = \mathrm{div}_\text{upw}(\vecb{u}_h) = \pi_0 \mathrm{div}(\vecb{u}_h).
\end{align*}
\end{lemma}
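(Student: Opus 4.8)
The plan is to read the entries of $D$ directly off the cellwise definition of $\mathrm{div}_\text{upw}$ and then verify the three claims by elementary face-by-face bookkeeping, exploiting that $\mathrm{div}_\text{upw}(\,\cdot\,\vecb{u}_h)$ is linear in its density argument (so $D$ is well defined and $\sum_k \chi_k \equiv 1$). Substituting the basis density $\chi_j$, which equals $1$ on $T_j$ and $0$ elsewhere, one sees that on a cell $T$ only two mechanisms produce a nonzero entry: an outflow face of $T$, where $\varrho^\text{upw}_F$ is taken from $T$ itself, and an inflow face of $T$, where $\varrho^\text{upw}_F$ is imported from the upwind neighbour $L$.

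For the sign structure in (1) I would first compute the diagonal. With density $\chi_j$ evaluated on $T_j$, only outflow faces contribute, since inflow faces import the density of a neighbour $L \neq T_j$ where $\chi_j$ vanishes; this leaves $D_{jj} = \frac{1}{|T_j|}\sum_{F \in \mathcal{F}(T_j)} u_{T_j,F}^+ \geq 0$ because each positive part is nonnegative. An off-diagonal entry can arise only from an inflow face importing density from exactly one neighbour, giving a single term of the form $-\frac{1}{|T_j|} u_{T_j,F}^- \leq 0$; hence $D$ carries the sign pattern of an M-matrix, and the weak diagonal dominance (the zero/row-sum property) is then inherited from the consistency computation below.

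For (3) I would combine, face by face, the diagonal contribution $u_{T_j,F}^+$ with the matching off-diagonal contribution $-u_{T_j,F}^-$, using $u_{T_j,F}^+ - u_{T_j,F}^- = u_{T_j,F}$. Summing over the faces of $T_j$ and invoking $\vecb{u}_h \in \vecb{H}^1_0(\Omega)$, so that the normal fluxes across boundary faces vanish, the divergence theorem gives $\sum_{F \in \mathcal{F}(T_j)} u_{T_j,F} = \int_{T_j} \mathrm{div}\,\vecb{u}_h \dx = |T_j|\,\pi_0(\mathrm{div}\,\vecb{u}_h)|_{T_j}$, whence $(D\vecb{1})_j = \pi_0(\mathrm{div}\,\vecb{u}_h)|_{T_j}$. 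Equivalently, and more slickly, the row sum equals $\mathrm{div}_\text{upw}(\vecb{u}_h)|_{T_j}$ by linearity together with $\sum_k \chi_k \equiv 1$, and the upwind value of the constant unit density is simply $1$ on every face, which reproduces the same cell average of $\mathrm{div}\,\vecb{u}_h$.

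Finally, the conservation statement (2) rests on the observation that each interior face $F = \partial T \cap \partial L$ carries a single, unambiguous upwind value $\varrho^\text{upw}_F$ seen identically from both sides, while the two normal fluxes satisfy $u_{T,F} = -u_{L,F}$. Assembling the cellwise fluxes into the global (volume-weighted) balance, every interior face then cancels in pairs, and boundary faces contribute nothing by the homogeneous boundary condition; this is precisely global mass conservation of the upwind flux and yields $D^T \vecb{1} = \bzero$. The main obstacle, and the only place where genuine care is needed, is exactly this bookkeeping: one must consistently attribute each face's upwind density to the correct (upwind) cell from the viewpoint of both adjacent elements, and keep the transpose and volume-scaling $1/|T|$ conventions straight, so that the row-sum identity (delivering $\pi_0\,\mathrm{div}\,\vecb{u}_h$) and the telescoping column-sum identity (delivering $\bzero$) are attributed to the correct side of $D$.
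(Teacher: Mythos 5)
Your proof is correct and follows essentially the same route as the paper, whose own proof merely invokes the relations $u_{T,F}^\pm = u_{L,F}^\mp$ on interior faces (for (1) and (2)) and $u_{T,F} = u_{T,F}^+ - u_{T,F}^-$ (for (3)); your face-by-face bookkeeping is the fully written-out version of exactly that argument. One small wording slip: the zero-sum part of (1) is inherited from the conservation/telescoping computation you perform for (2), not from the consistency computation for (3), which yields $\pi_0\,\mathrm{div}\,\vecb{u}_h$ rather than $0$ --- but your closing paragraph already keeps these two identities, and the transpose and volume-weighting conventions they require, correctly apart.
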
  
\begin{proof}
  The proof of (1) and (2) is common for finite volume discretizations
  and follows straightforwardly from the relation
  \(u_{T,F}^\pm = u_{L,F}^\mp\) for \(F \in \mathcal{F}(T) \cap \mathcal{F}(L)\). For the proof of (3) recall \(u_{L,F} = u_{L,F}^+ - u_{L,F}^-\).
\end{proof}

Since \eqref{eq:gradient:robust:scheme} is a nonlinear problem, it has to be solved iteratively and one has to choose a reasonable solution \(\varrho_h\) with \(\mathrm{div}_\text{upw}(\varrho_h \vecb{u}_h) = 0\) that satisfies the non-negativity and mass constraints.
Consider a given approximation \(\vecb{u}_h\) and \(\varrho_h^{n-1}\)
(from a previous fixpoint iterate or an initial solution). To compute a unique update \(\varrho_h^{n}\) of the discrete density that preserves the non-negativity and the integral mean of \(\varrho_h^{n-1}\), we suggest to employ the backward Euler method. Given the (diagonal) \(P_0\) mass matrix \(M\), i.e.
\(M_{jj} := \lvert T_j \rvert\) and some time step \(\tau\), this leads to the linear problem
\begin{align}\label{eqn:discrete_timestep_continuity}
  (M + \tau D) \varrho_h^{n} = M \varrho_h^{n-1}.
\end{align}
Here, \(\varrho_h^{n}\) has to be understood as a column vector with
the elementwise constant values of \(\varrho_h^{n} \in P_0(\mathcal{T})\).

\begin{lemma}[Preservation of non-negativity and integral mean]
\label{lem:mass_conservation}
  It holds
  \begin{align*}
    & (1) & \varrho_h^{n} & \geq 0 \quad \text{if } \varrho_h^{n-1} \geq 0,\\
    & (2) & M (\varrho_h^{n} - \varrho_h^{n-1}) \cdot \vecb{1} & = 0.
  \end{align*}
\end{lemma}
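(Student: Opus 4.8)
The plan is to read off both statements from the structure of the system matrix $A := M + \tau D$ of \eqref{eqn:discrete_timestep_continuity}, using only the properties of $D$ and $M$ collected above together with $\tau > 0$. From $M_{jj} = |T_j| > 0$ and the weak diagonal dominance of $D$ one obtains that $A$ has strictly positive diagonal entries $A_{jj} = M_{jj} + \tau D_{jj} > 0$ and non-positive off-diagonal entries $A_{jk} = \tau D_{jk} \le 0$ for $j \ne k$; moreover, since the balance $D^T \vecb{1} = 0$ makes the column sums of $D$ vanish, $A$ is strictly diagonally dominant (by columns), $A_{kk} = M_{kk} + \tau D_{kk} > \tau D_{kk} = \sum_{j \ne k} |A_{jk}|$. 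In particular $A$ is non-singular, so the update $\varrho_h^n = A^{-1} M \varrho_h^{n-1}$ is well defined, and the two assertions reduce to one conservation identity and one monotonicity statement for $A$.

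I would establish the conservation of the integral mean (2) first, as it is immediate. Multiplying the update $A \varrho_h^n = M \varrho_h^{n-1}$ from the left by $\vecb{1}^T$ and using $\vecb{1}^T D = 0$ (the property $D^T \vecb{1} = 0$) gives
\begin{equation*}
  \vecb{1}^T M \varrho_h^n + \tau\, \vecb{1}^T D \varrho_h^n = \vecb{1}^T M \varrho_h^{n-1}, \qquad \text{hence} \qquad \vecb{1}^T M \varrho_h^n = \vecb{1}^T M \varrho_h^{n-1}.
\end{equation*}
Since $M = M^T$, this is exactly $M(\varrho_h^n - \varrho_h^{n-1}) \cdot \vecb{1} = 0$, i.e. the discrete total mass $\sum_j |T_j|\, \varrho_h^n|_{T_j}$ equals that of $\varrho_h^{n-1}$.

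For the non-negativity (1) the plan is to prove that $A$ is \emph{monotone}, that is $A^{-1} \ge 0$ entrywise, and then to conclude $\varrho_h^n = A^{-1} M \varrho_h^{n-1} \ge 0$ from $M \ge 0$ and $\varrho_h^{n-1} \ge 0$. Monotonicity follows because the three features recorded in the first paragraph --- positive diagonal, non-positive off-diagonals and strict diagonal dominance --- make $A$ a non-singular M-matrix, and such matrices have an entrywise non-negative inverse. The elementary mechanism behind this is a Jacobi splitting $A = P - N$ with $P := \mathrm{diag}(A)$ positive and $N := P - A \ge 0$: the strict dominance forces the associated iteration to have spectral radius below one, so that the Neumann series $\big( \sum_{m \ge 0} (P^{-1} N)^m \big) P^{-1}$ converges to a non-negative matrix equal to $A^{-1}$ (applied to $A^T$, whose row dominance is equivalent to the column dominance of $A$, if one prefers a row-wise bound).

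The only genuinely delicate point is this monotonicity $A^{-1} \ge 0$, and within it the non-positivity of the off-diagonal entries of $D$: a cell receives mass only from its upwind neighbours and with a non-negative weight, which is precisely the content of the weak diagonal dominance of $D$ and the place where the upwind choice $\varrho_F^{\mathrm{upw}}$ enters. The positivity of $\tau$ and of the diagonal of $M$ is likewise essential, since without the mass matrix the dominance would degenerate to a non-strict one and $A$ could fail to be invertible; everything else is bookkeeping.
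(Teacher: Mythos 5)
Your proposal is correct and follows essentially the same route as the paper: identity (2) from $D^T\vecb{1}=0$ by testing the update with $\vecb{1}$, and (1) from the fact that $M+\tau D$ is an $M$-matrix with entrywise non-negative inverse. You simply spell out in more detail (sign pattern, column dominance, Neumann series) what the paper asserts in one line.
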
  
\begin{proof}
  Since \(M\) is a positive diagonal matrix, \(M + \tau D\) is diagonal-dominant
  and hence an \(M\)-matrix. This implies that the inverse \((M + \tau D)^{-1}\)
  is totally positive and hence preserves the non-negativity of \(\varrho_h^{n-1}\).
  The second property follows from \(D^T \vecb{1} = 0\) used in the identity
  \begin{align*}
    (M \varrho_{h}^{n}) \cdot \vecb{1} 
  = \varrho_{h}^{n} \cdot ((M + \tau D)^T \vecb{1})
     = ((M + \tau D) \varrho_{h}^{n}) \cdot \vecb{1}
     = (M \varrho_{h}^{n-1}) \cdot \vecb{1}.
  \end{align*}
  This concludes the proof.
\end{proof}

The pseudo time-stepping \eqref{eqn:discrete_timestep_continuity} is embedded into the iterative algorithm in Section~\ref{sec:algorithm}.

\section{On gradient-robustness and well-balanced schemes}
\label{sec:gradient_robustness}
In analogy to \eqref{eqn:def_Vperp}, consider the discrete space
$$
  \bV^\perp_h := \{ \bv_h \in \bV_h :
  (\epstensor(\bv_h), \epstensor(\bw^0_h)) = 0
  \, \text{ for all $\bw^0_h \in \bV^0_h$} \}
$$
which allows for the orthogonal splitting $\bV_h := \bV^0_h \oplus \bV^\perp_h$
in the discrete scalar product $(\epstensor(\bullet), \epstensor(\bullet))$.
The main structural property of the gradient-robust
scheme \eqref{eq:gradient:robust:scheme} is now derived by:
\begin{theorem} \label{thm:grad:robust:structure}
Exploiting the splitting $\bu_h = \bu^0_h + \bu^\perp_h$
with $\bu^0_h \in \bV^0_h$ and $\bu^\perp \in \bV^\perp_h$,
the \emph{discretely divergence-free part $\bu^0_h$} fulfills
a \emph{pressure-robust} discretization of the
\emph{incompressible
Stokes} problem in the form:
for all $\bv^0_h \in \bV^0_h$ it holds
$$
  a_1(\bu^0_h, \bv^0_h) = F(\Pi \vecb{v}^0_h) + G(\varrho_h, \Pi \vecb{v}^0_h)
    = (\mathbb{P}(\bfe + \rho_h \bg), \Pi \bv^0_h) ).
$$
\end{theorem}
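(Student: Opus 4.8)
The plan is to test the discrete momentum equation \eqref{eq:gradient:robust:scheme} with a single discretely divergence-free function $\bv^0_h \in \bV^0_h \subset \bV_h$ and to show that three of the four terms collapse, leaving exactly the claimed identity. First I would treat the left-hand side term by term. For the viscous form I would insert the orthogonal splitting $\bu_h = \bu^0_h + \bu^\perp_h$ and use that, by the definition of $\bV^\perp_h$ in analogy with \eqref{eqn:def_Vperp}, the part $\bu^\perp_h$ is $(\epstensor(\bullet),\epstensor(\bullet))$-orthogonal to every element of $\bV^0_h$; hence $a_1(\bu^\perp_h, \bv^0_h) = 0$ and $a_1(\bu_h, \bv^0_h) = a_1(\bu^0_h, \bv^0_h)$. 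For the dilatation form $a_2(\Pi\bu_h, \Pi\bv^0_h)$ I would invoke property \eqref{eq:pi:prop:1}, which gives $\mathrm{div}(\Pi\bv^0_h) = \mathrm{div}_h \bv^0_h = 0$ since $\bv^0_h$ is discretely divergence-free, so this term vanishes. For the pressure form $b(p_h, \bv^0_h)$ I would use that $p_h \in Q_h$ is piecewise constant, whence $\int_\Omega p_h\,\mathrm{div}\,\bv^0_h = \int_\Omega p_h\,\pi_0(\mathrm{div}\,\bv^0_h) = \int_\Omega p_h\,\mathrm{div}_h\bv^0_h = 0$. What survives on the left is exactly $a_1(\bu^0_h, \bv^0_h)$, matched on the right by $F(\Pi\bv^0_h) + G(\varrho_h, \Pi\bv^0_h)$; this is the first claimed equality.

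For the second equality I would rewrite the right-hand side as a single $\bL^2$ inner product, $F(\Pi\bv^0_h) + G(\varrho_h, \Pi\bv^0_h) = (\bfe + \varrho_h\bg, \Pi\bv^0_h)$, and then split the data by the Helmholtz--Hodge decomposition \eqref{eq:helmholtz:hodge:decomp}: writing $\bfe + \varrho_h\bg = \mathbb{P}(\bfe + \varrho_h\bg) + \nabla\phi$ with some $\phi \in H^1(\Omega)$, which exists precisely because the $\bL^2$-complement of $\bL^2_\sigma$ consists of gradients. The key step is then to show that the gradient part is annihilated by the reconstructed test function, i.e. $(\nabla\phi, \Pi\bv^0_h) = 0$. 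This is exactly property \eqref{eq:pi:prop:3}, which yields $\int_\Omega \nabla\phi\cdot(\Pi\bv^0_h) = -\int_\Omega \phi\,\mathrm{div}_h\bv^0_h = 0$ because $\mathrm{div}_h\bv^0_h = 0$. Combining this with linearity of the inner product gives $(\bfe + \varrho_h\bg, \Pi\bv^0_h) = (\mathbb{P}(\bfe + \varrho_h\bg), \Pi\bv^0_h)$, closing the chain.

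The proof is essentially bookkeeping once the properties of $\Pi$ and of the Helmholtz--Hodge projector are in hand. The conceptual heart --- and the only step that genuinely uses the gradient-robust modification rather than standard mixed-method manipulations --- is the orthogonality $(\nabla\phi, \Pi\bv^0_h) = 0$ furnished by \eqref{eq:pi:prop:3}. Without the reconstruction operator $\Pi$ one would be pairing $\nabla\phi$ with $\bv^0_h$ itself, and $(\nabla\phi, \bv^0_h)$ would in general fail to vanish for arbitrary $\phi \in H^1(\Omega)$, surviving except for $\phi$ in the smaller space of discrete-pressure potentials; this is exactly the gap $\Pi$ is designed to close, and the reason the divergence-free part $\bu^0_h$ ends up seeing only the \emph{continuous} projection $\mathbb{P}(\bfe + \varrho_h\bg)$. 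I would therefore expect no serious obstacle, but would double-check the piecewise-constant pressure cancellation and the sign in the Helmholtz split, as these are the two spots where an index or sign slip could go unnoticed.
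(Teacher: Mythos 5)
Your proof is correct and follows exactly the argument the paper intends: the paper states Theorem \ref{thm:grad:robust:structure} without an explicit proof, but the ingredients you use --- the $(\epstensor(\cdot),\epstensor(\cdot))$-orthogonality of the splitting, the vanishing of $a_2$ and $b$ via $\mathrm{div}(\Pi\bv^0_h)=\mathrm{div}_h\bv^0_h=0$ and the piecewise-constant pressure, and the annihilation of the gradient part of the data via \eqref{eq:pi:prop:3} --- are precisely the properties the paper establishes beforehand, with the last one singled out as the ``cornerstone'' in the remark following the lemma on $\Pi$. Nothing is missing.
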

\begin{remark}\label{rem:asymptotic_convergence}
Theorem \ref{thm:grad:robust:structure} is the discrete
equivalent to the continuous relation
\eqref{eq:u0:helm}.
We emphasize the appearance of the \emph{continuous}
Helmholtz--Hodge projector $\mathbb{P}(\bfe + \rho_h \bg)$.
Actually, it is again Theorem \ref{thm:grad:robust:structure}
that makes the scheme \emph{asymptotic-preserving} in the
low Mach number limit, where the non-divergence-free
part $\bu^\perp_h$ of the discrete velocity solution
$\bu_h$ should vanish in the limit.
\end{remark}


Consider the compressible Stokes problem \eqref{eqn:compressible_weak} with the right-hand sides
\begin{align*}
  \vecb{f} := \nabla q \quad \text{and} \quad \vecb{g} = 0
\end{align*}
for some \(q \in H^1(\Omega)\). For this setting, one can observe that
the solution \((\vecb{u},p) = (\vecb{0},q+C)\) of the incompressible Stokes problem also solves the compressible problem if there is enough mass
in the system. Indeed, if it exists a (global) constant \(C\), such that \(\varrho := q/c+C\) satisfies the mass constraint \(\int_\Omega \varrho \, dx = M\) and is non-negative \(\varrho \geq 0\), then the solution of the incompressible
Stokes problem also is a solution of the compressible problem.

Vice versa, assume that \(\varrho \geq 0\) satisfies the mass constraint and \(\nabla(\varphi(\varrho)) = \nabla q\). Then, it is clear that
\((\vecb{u},p,\varrho) = (\vecb{0},\varphi(\varrho),\varrho)\) solves the
compressible Stokes problem and \((\vecb{u},p) = (\vecb{0},\varphi(\varrho))\)
solves the incompressible Stokes problem. This proves the following lemma.

\begin{lemma}\label{lem:hydrostatic_solutions_compressible}
  The compressible Stokes problem with right-hand sides
  \begin{align*}
    \vecb{f} := \nabla q \quad \text{and} \quad \vecb{g} = 0
  \end{align*}
  has a hydrostatic solution \(\vecb{u} = \vecb{0}\), if and only if it exists a (global) constant \(C\), such that \(\varrho := q/c+C\) satisfies the mass constraint \(\int_\Omega \varrho \, dx = M\) and is non-negative, i.e.\ \(\varrho \geq 0\). The pair \((\vecb{u},p) := (\vecb{0},q)\) also solves
  the incompressible Stokes problem.
\end{lemma}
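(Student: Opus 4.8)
The plan is to prove both implications of the equivalence by direct substitution into the three equations of \eqref{eq:compr:stokes:problem}, since every term simplifies drastically once $\vecb{u} = \vecb{0}$. No compactness or existence machinery is needed; the statement is essentially a verification, and the algebraic content is entirely contained in the discussion preceding the lemma.

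First I would treat the forward direction. Assuming a hydrostatic solution $(\vecb{u},p,\varrho)$ with $\vecb{u} = \vecb{0}$ exists, I observe that $\epstensor(\vecb{0}) = \bzero$ and $\mathrm{div}\,\vecb{0} = 0$, so the stress tensor $\boldsymbol{\sigma} = 2\mu\,\epstensor(\vecb{u}) + \lambda(\nabla\cdot\vecb{u})\boldsymbol{I}$ vanishes identically and the momentum balance collapses to $\nabla p = \vecb{f} + \varrho\vecb{g} = \nabla q$ (using $\vecb{g} = \vecb{0}$). Inserting the equation of state $p = c\varrho$ yields $c\,\nabla\varrho = \nabla q$, hence $\nabla(\varrho - q/c) = \bzero$. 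Since $\Omega$ is connected, this forces $\varrho = q/c + C$ for a single global constant $C$, and the admissibility of $\varrho$ as a density supplies exactly the two required properties $\int_\Omega \varrho\,dx = M$ and $\varrho \geq 0$.

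For the converse I would simply exhibit the candidate. Given a constant $C$ for which $\varrho := q/c + C$ meets the mass and non-negativity constraints, I set $\vecb{u} := \vecb{0}$ and $p := c\varrho$. The continuity equation $\mathrm{div}(\varrho\vecb{u}) = 0$ holds trivially, the equation of state holds by definition, and the momentum balance reduces, as above, to $\nabla p = c\,\nabla\varrho = \nabla q = \vecb{f} = \vecb{f} + \varrho\vecb{g}$ because $\boldsymbol{\sigma} = \bzero$ and $\vecb{g} = \vecb{0}$. Thus $(\vecb{u},p,\varrho) = (\vecb{0}, c\varrho, \varrho)$ solves the compressible problem. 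Finally, since $\vecb{f} = \nabla q$ and the incompressible pressure is determined only up to an additive constant, the identical computation shows that $(\vecb{u},p) = (\vecb{0}, q)$ solves the incompressible Stokes problem.

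I do not expect any genuine obstacle here. The only points deserving care are the integration step $c\,\nabla\varrho = \nabla q \Rightarrow \varrho = q/c + C$, which relies on connectedness of $\Omega$ to guarantee a \emph{single} constant, and the book-keeping of that constant across the two pressure normalisations: the equation of state pins down the compressible pressure absolutely as $p = q + cC$, whereas in the incompressible problem the constant is immaterial, which is precisely why one may write $p = q$ there.
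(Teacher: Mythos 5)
Your proposal is correct and follows essentially the same route as the paper, which also proves both directions by direct substitution: with $\vecb{u}=\vecb{0}$ the stress vanishes, the momentum balance collapses to $\nabla p = \nabla q$, and the equation of state forces $\varrho = q/c + C$ up to the mass and non-negativity constraints. Your added remarks on connectedness of $\Omega$ and on the bookkeeping of the additive constant between the compressible and incompressible pressure normalisations are sound refinements of the paper's more informal verification, not a different argument.
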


\begin{definition}[Well-balanced property]
  A discretization of the compressible Stokes problem is called \emph{well-balanced} if it computes hydrostatic solutions \(\vecb{u} = \vecb{0}\) correctly
  if the right-hand side is balanced by the gradient of some admissible
  pressure-density pair.
\end{definition}

%

\section{Existence of discrete solutions}\label{sec:existence}

The discussion in Subsection \ref{subsection:pseudo:time:stepping}
and in
Section \ref{sec:gradient_robustness}
motivates the following
pseudo-time stepping algorithm
and the choice of its initial value.
Subsection \ref{subsection:fixed:point} proves that this algorithm
has a fixed point, which is a discrete solution of \eqref{eq:gradient:robust:scheme}.

\subsection{An iterative algorithm with well-balanced initial solution}\label{sec:algorithm}
The previous discussion motivates to choose the initial solution by a solve of
the incompressible Stokes equations and a rescaling of its pressure. In case of a well-balanced situation as in Lemma~\ref{lem:hydrostatic_solutions_compressible}, this then already gives a discrete solution of the compressible system. Otherwise, one enters a suitable fixed point iteration.

\medskip
\textbf{Input.}
\begin{itemize}
  \item some triangulation \(\mathcal{T}\),
  \item stepsize \(\tau > 0\).
\end{itemize}

\medskip 
\textbf{Initial Step.}
\begin{itemize}
\item 
  Set \(\varrho_{-1} \equiv M/\lvert \Omega \rvert\).
\item Solve the incompressible Stokes system, i.e., find \(\vecb{u}_0 \in \vecb{V}_h\) and \(p_0 \in Q_h\) such that
  \begin{align*}
  a_1(\vecb{u}_0,\vecb{v}_h) + b(p_0,\vecb{v}_h) & = F(\Pi \vecb{v}_h) + G(\varrho_{-1}, \Pi \vecb{v}_h) && \text{for all } \vecb{v} \in \vecb{V}_h,\\
  b(q_h,\vecb{u}_0) & = 0 && \text{for all } q_h \in Q_h.
\end{align*}
\item Set \(\varrho_0 := p_0/c + C\), where \(C \in \mathbb{R}\) is chosen such that
\(\varrho_0\) satisfies \(\int_\Omega \varrho_0 \, dx = M\) and
\(\varrho_0 \geq 0\).
If this is not possible, start with \(\varrho_0 = \varrho_{-1}\) and
 \(\vecb{u}_h = 0\).
\end{itemize}

\medskip
\textbf{Loop (start with \(n=1\)).}
\begin{itemize}
  \item Update matrix \(D\) according to \eqref{eqn:definition_matrixD} (with \(\bu_h = \bu_h^{n-1}\)) and find \(\varrho_h^n \in Q_h\) such that
    \begin{align} \label{eq:update:disc:rho}
    (M + \tau D) \varrho_h^{n} = M \varrho_h^{n-1}.
  \end{align}
  \item Update the pressure according to the equation of state, i.e.
  \begin{align} \label{eq:update:disc:p}
    p_h^n := \varphi(\varrho_h^n) = c \varrho_h^n.
  \end{align}
  \item Find \(\vecb{u}_h^n \in \vecb{V}_h\) that
  satisfies the momentum equation
    \begin{align} \label{eq:update:disc:u}
    \hspace{1cm}  a_1(\vecb{u}_h^n,\vecb{v}_h) + a_{2}(\Pi \vecb{u}_h^n,\Pi \vecb{v}_h) & = F(\Pi \vecb{v}_h) + G(\varrho_h^{n}, \Pi \vecb{v}_h) - b(p_h^{n},\vecb{v}_h) && \text{for all } \vecb{v}_h \in \vecb{V}_h.
  \end{align}
  \item Compute residuals of the stationary momentum equation and the continuity equation, i.e.
  \begin{align*}
    \hspace{1cm} \text{res} := \| a_1(\vecb{u}_h^n,\bullet) + a_{2}(\Pi \vecb{u}_h^n,\Pi \bullet) - F(\Pi \bullet) + G(\varrho_h^{n}, \Pi \bullet) - b(p_h^{n},\bullet) \|_{l^2} + \lvert \mathrm{div}_\text{upw}(\varrho_h^n \vecb{u}_n) \rvert
  \end{align*}
  \item Stop if \(\text{res} < \text{tol}\), otherwise increase \(n\) by one and restart loop.
\end{itemize}

\begin{remark}
Note, that one only can prove that there exists some discrete solution (see Subsection~\ref{subsection:fixed:point}), but it is not guaranteed that the algorithm converges. In our numerical benchmarks, we could
enforce convergence by choosing small enough time steps \(\tau\).
\end{remark}

\subsection{Existence of a fixed point}\label{subsection:fixed:point}
Note, that there is no uniqueness result for the continuous compressible Stokes system, but one can show existence of a (discrete) solution for the (discretized) compressible Stokes problem. To do so we mainly follow the argumentation in \cite{gal-09-conv}.
There the existence of a weak solution with \(\varrho \in L^{2}\) and \(p = \varphi(\varrho) := c \varrho\) is proven. The main argument concerns the proof of the a priori stability
  estimate
  \begin{align*}
    \min \lbrace 2\mu + \lambda, \mu \rbrace \| \nabla \vecb{u}_h \|_{L^2} + \| p_h \|_{L^2(\Omega)} + \| \varrho_h \|_{L^{2}(\Omega)} & \lesssim 1
  \end{align*}  
  which is needed in the convergence proof via some Brouwer fixed point argument.
  The crucial point is the vanishing term
  \begin{align}\label{eqn:divpressureintegral_vanish}
    \int_\Omega p \mathrm{div} (\vecb{u})
    = c\int_\Omega \varrho \mathrm{div} (\vecb{u})
    = - c\int_\Omega \nabla(\mathrm{log}\varrho) \cdot (\varrho \vecb{u})
    = c\int_\Omega \mathrm{log}\varrho \, \mathrm{div}(\varrho \vecb{u})
    = 0
  \end{align}
  for \(\varrho \in C^1(\overline{\Omega})\), which can also be generalized to \(\varrho \in L^{2}(\Omega)\), see \cite[Appendix A]{MR2600538} for details.
  
  A similar stability estimate holds for the discrete scheme which requires the following Lemma.
  \begin{lemma}\label{lem:convexity_divergence_lemma}
  For any convex and twice continuously differentiable function \(\phi : [0, \infty) \rightarrow \mathbb{R}^+\), it holds
  \begin{multline*}
     \int_\Omega \phi^\prime(\varrho_h) \mathrm{div}_\text{upw}(\varrho_h \vecb{u}_h) \, dx
     - \int_\Omega (\varrho_h \phi^\prime(\varrho_h) - \phi(\varrho_h)) \mathrm{div}(\vecb{u}_h) \, dx\\
     = \frac{1}{2} \sum_{F_{KL} \in \mathcal{F}(\Omega)} \phi^{\prime\prime}(\varrho_{KL})\frac{\lvert u_{K,F_{KL}} \rvert }{\lvert F_{KL} \rvert}  \| [[ \varrho_h ]] \|^2_{L^2(F_{KL})} \geq 0,
  \end{multline*}  
  where the quantities
  \(\varrho_{KL} \in (\varrho_h|_K,\varrho_h|_L)\)
  denote intermediate values on every face $F_{KL} \in \mathcal{F}(\Omega)$
  according to remainders in corresponding Taylor expansions.
   \end{lemma}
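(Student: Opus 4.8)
The plan is to exploit that both $\varrho_h$ and the composite quantity $\psi(\varrho_h) := \varrho_h \phi'(\varrho_h) - \phi(\varrho_h)$ are elementwise constant, so that each volume integral collapses to a sum over simplices of face fluxes. First I would write, using the definition of $\mathrm{div}_\text{upw}$,
\[
  \int_\Omega \phi'(\varrho_h)\,\mathrm{div}_\text{upw}(\varrho_h \vecb{u}_h)\,dx
  = \sum_{T \in \mathcal{T}} \phi'(\varrho_h|_T) \sum_{F \in \mathcal{F}(T)} \varrho^\text{upw}_F\, u_{T,F},
\]
and, invoking the divergence theorem on each $T$ together with $\int_T \mathrm{div}(\vecb{u}_h)\,dx = \sum_{F \in \mathcal{F}(T)} u_{T,F}$,
\[
  \int_\Omega \psi(\varrho_h)\,\mathrm{div}(\vecb{u}_h)\,dx
  = \sum_{T \in \mathcal{T}} \psi(\varrho_h|_T) \sum_{F \in \mathcal{F}(T)} u_{T,F}.
\]
The difference of the two left-hand terms is then a single sum over all element-face pairs.

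Next I would reorganise this element-face sum into a sum over faces. Since $\vecb{u}_h \in \vecb{H}^1_0(\Omega)$, every boundary flux $u_{T,F}$ with $F \subset \partial\Omega$ vanishes, so only interior faces $F_{KL} \in \mathcal{F}(\Omega)$ contribute; each is visited from both adjacent elements $K$ and $L$, with $u_{K,F_{KL}} = -u_{L,F_{KL}}$. Fixing such a face and assuming without loss of generality $u_{K,F_{KL}} > 0$ (so the upwind value is $\varrho^\text{upw}_{F_{KL}} = \varrho_h|_K$), I would collect the four contributions (two per integral) into the single per-face term
\[
  u_{K,F_{KL}}\Big[\varrho_h|_K\big(\phi'(\varrho_h|_K) - \phi'(\varrho_h|_L)\big) - \big(\psi(\varrho_h|_K) - \psi(\varrho_h|_L)\big)\Big].
\]

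The key algebraic step is to simplify the bracket. Substituting $\psi(\varrho) = \varrho\phi'(\varrho) - \phi(\varrho)$ and cancelling, the bracket reduces exactly to the second-order Taylor remainder
\[
  \phi(\varrho_h|_K) - \phi(\varrho_h|_L) - \phi'(\varrho_h|_L)\big(\varrho_h|_K - \varrho_h|_L\big)
  = \tfrac{1}{2}\phi''(\varrho_{KL})\big(\varrho_h|_K - \varrho_h|_L\big)^2,
\]
where $\varrho_{KL}$ lies between $\varrho_h|_K$ and $\varrho_h|_L$ by Taylor's theorem. Because $\varrho_h$ is face-constant, $(\varrho_h|_K - \varrho_h|_L)^2 = |F_{KL}|^{-1}\,\|[[\varrho_h]]\|^2_{L^2(F_{KL})}$, and since $u_{K,F_{KL}} = |u_{K,F_{KL}}|$ in this case, the per-face term equals $\tfrac{1}{2}\phi''(\varrho_{KL})\,|u_{K,F_{KL}}|\,|F_{KL}|^{-1}\,\|[[\varrho_h]]\|^2_{L^2(F_{KL})}$, matching the claimed summand. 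Summing over $\mathcal{F}(\Omega)$ yields the identity, and the inequality $\ge 0$ follows at once from convexity, i.e.\ $\phi'' \ge 0$.

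The part that needs the most care is the bookkeeping of the upwind convention when combining the $K$- and $L$-contributions, and in particular checking that the opposite flow case $u_{K,F_{KL}} < 0$ produces the same symmetric summand: there the upwind value is $\varrho_h|_L$ and the bracket telescopes instead to $\phi(\varrho_h|_K) - \phi(\varrho_h|_L) - \phi'(\varrho_h|_K)(\varrho_h|_K - \varrho_h|_L) = -\tfrac{1}{2}\phi''(\varrho_{KL})(\varrho_h|_K - \varrho_h|_L)^2$, whose product with the now-negative $u_{K,F_{KL}}$ again gives the non-negative expression above. Apart from this sign tracking the argument is elementary.
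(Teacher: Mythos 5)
Your proposal is correct and follows essentially the same route as the paper's proof: both rewrite the two integrals as element--face sums, regroup over interior faces using $u_{K,F_{KL}}=-u_{L,F_{KL}}$ and the vanishing of boundary fluxes, perform the case split on the sign of the flux to identify the upwind value, and reduce the per-face bracket to the second-order Taylor remainder $\tfrac12\phi''(\varrho_{KL})(\varrho_h|_K-\varrho_h|_L)^2$. Your bookkeeping of the factor $\tfrac12$ is in fact slightly more careful than the paper's intermediate display, which momentarily drops it before restoring it in the final identity.
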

   \begin{proof}
   By convexity of \(\phi\) and Taylor expansion, it holds
   \begin{align}\label{eqn:convexity_property}
     \phi^\prime(x)(x-y) - \phi(x) + \phi(y) = \frac{1}{2} \phi^{\prime\prime}(s) (x-y)^2 \geq 0
     \quad \text{for some } s \in (x,y).
   \end{align}   
   The integrals in the assertion can be rewritten into
   \begin{align*}
     \int_\Omega \phi^\prime(\varrho_h) \mathrm{div}_\text{upw}(\varrho_h \vecb{u}_h) \, dx
     & = \sum_{T \in \mathcal{T}} \sum_{F \in \mathcal{F}(T)} \phi^\prime(\varrho_h|_T) \varrho_F^\text{upw} u_{T,F}\\
     \int_\Omega (\varrho_h \phi^\prime(\varrho_h) - \phi(\varrho_h)) \mathrm{div}(\vecb{u}_h) \, dx
     & = \sum_{T \in \mathcal{T}} \sum_{F \in \mathcal{F}(T)} (\varrho_h|_T \phi^\prime(\varrho_h|_T) -\phi( \varrho_h|_T)) u_{T,F}.
   \end{align*}
   Hence, their difference reads
   \begin{multline*}
     \int_\Omega \phi^\prime(\varrho_h) \mathrm{div}_\text{upw}(\varrho_h \vecb{u}_h) \, dx - \int_\Omega (\varrho_h \phi^\prime(\varrho_h) - \phi(\varrho_h)) \mathrm{div}(\vecb{u}_h) \, dx\\
     \begin{aligned}
     &=\sum_{T \in \mathcal{T}} \sum_{F \in \mathcal{F}(T)} \left(
     \phi^\prime(\varrho_h|_T) \varrho_F^\text{upw} - \varrho_h|_T \phi^\prime(\varrho_h|_T) + \phi(\varrho_h|_T)
     \right) u_{T,F}\\
     &=\sum_{F_{KL} \in \mathcal{F}(\Omega)} 
     \left(
     \phi^\prime(\varrho_h|_K)(\varrho_{F_{KL}}^\text{upw} - \varrho_h|_K) + \phi^\prime(\varrho_h|_L)(\varrho_h|_L - \varrho_{F_{KL}}^\text{upw}) + \phi(\varrho_h|_K) - \phi(\varrho_h|_L)
     \right) u_{K,F_{KL}}\\
     & =: \sum_{F_{KL} \in \mathcal{F}(\Omega)} u_{K,F_{KL}} \theta_{KL}
     \end{aligned}
   \end{multline*}
   where the last sum collects the flux jumps \(\theta_{KL}\) over all
   interior faces \(F_{KL} \in \mathcal{F}(\Omega)\) (on boundary faces it holds \(u_{K,F_{KL}} = 0\)). It remains to show that each summand is non-negative.
   The first case assumes \(u_{K,F_{KL}} > 0\) and hence \(\varrho_{F_{KL}}^\text{upw} = \varrho_h|_K\). Then, one obtains for the jump term
   \begin{align*}
      \theta_{KL} & = \phi^\prime(\varrho_h|_L)(\varrho_h|_L - \varrho_h|_K) + \phi(\varrho_h|_K) - \phi(\varrho_h|_L) 
      = \phi^{\prime\prime}(\varrho_{KL}) (\varrho_h|_K - \varrho_h|_L)^2 \geq 0
   \end{align*}
   due to \eqref{eqn:convexity_property} where \(s\) is renamed
   to \(\varrho_{KL}\).
   In the other case \(u_{K,F_{KL}} < 0\) it holds
   \(\varrho_{F_{KL}}^\text{upw} = \varrho_h|_L\) and hence  
   \begin{align*}
      \theta_{KL} & = \phi^\prime(\varrho_h|_K)(\varrho_h|_L - \varrho_h|_K) + \phi(\varrho_h|_K) - \phi(\varrho_h|_L) 
      = - \phi^{\prime\prime}(\varrho_{KL}) (\varrho_h|_K - \varrho_h|_L)^2 \leq 0
   \end{align*}
   again by \eqref{eqn:convexity_property} (multiplied by \(-1\)).
   Hence
   \begin{multline*}
   \int_\Omega \phi^\prime(\varrho_h) \mathrm{div}_\text{upw}(\varrho_h \vecb{v}_h) \, dx - \int_\Omega (\varrho_h \phi^\prime(\varrho_h) - \phi(\varrho_h)) \mathrm{div}(\vecb{v}_h) \, dx\\
     \begin{aligned}
   & = \sum_{F_{KL} \in \mathcal{F}(\Omega)} \lvert u_{K,F_{KL}} \rvert \phi^{\prime\prime}(\varrho_{KL}) (\varrho_h|_K - \varrho_h|_L)^2\\
   & = \frac{1}{2} \sum_{F_{KL} \in \mathcal{F}(\Omega)} \phi^{\prime\prime}(\varrho_{KL})\frac{\lvert u_{K,F_{KL}} \rvert }{\lvert F_{KL} \rvert}  \| [[ \varrho_h ]] \|^2_{L^2(F_{KL})}
    \geq 0.
   \end{aligned}
   \end{multline*}
   This concludes the proof.
   \end{proof}
  
  \begin{lemma}[Discrete stability estimate]\label{lem:discrete-stability}
  For any solution \((\vecb{u}_h,p_h,\varrho_h)\) of the discrete scheme, it holds
  \begin{align}
    & \label{eq:disc:stab:est:i} & 
    \min \lbrace 2\mu + \lambda, \mu \rbrace \| \nabla \vecb{u}_h \|_{L^2} 
    & \lesssim \| \vecb{f} \|_{L^2} + \| \varrho_h \|_{L^{2}(\Omega)} \| \vecb{g} \|_{L^{\infty}},\\
    & \label{eq:disc:stab:est:ib} & 
    \sum_{F_{KL} \in \mathcal{F}(\Omega)} \varrho_{KL}^{-1}\frac{\lvert u_{K,F_{KL}} \rvert}{\lvert F_{KL} \rvert}   \| [[ \varrho_h ]] \|^2_{L^2(F_{KL})}
    & \lesssim c^{-1} \min \lbrace 2\mu + \lambda, \mu \rbrace^{-1} \left(\| \vecb{f} \|_{L^2} + \| \varrho_h \|_{L^{2}(\Omega)} \| \vecb{g} \|_{L^{\infty}}\right),\\
    & & \text{If } \vecb{g} \equiv \vecb{0} \ \Longrightarrow \ \label{eq:disc:stab:est:ii} \| p_h \|_{L^2(\Omega)} & = c\| \varrho_h \|_{L^{2}(\Omega)} \lesssim \| \vecb{f} \|_{L^2} + c.
  \end{align}
  \end{lemma}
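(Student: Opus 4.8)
The plan is to test the discrete momentum equation \eqref{eq:gradient:robust:scheme} with the discrete velocity itself, $\vecb{v}_h=\vecb{u}_h$, and to control each of the three resulting contributions separately: the viscous part $a_1(\vecb{u}_h,\vecb{u}_h)+a_2(\Pi\vecb{u}_h,\Pi\vecb{u}_h)$ from below by $\|\nabla\vecb{u}_h\|_{L^2}^2$, the pressure term $b(p_h,\vecb{u}_h)$ by identifying it with a manifestly non-negative numerical-dissipation sum, and the right-hand side $F(\Pi\vecb{u}_h)+G(\varrho_h,\Pi\vecb{u}_h)$ by Cauchy--Schwarz together with the stability of the reconstruction $\Pi$.

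For the viscous coercivity I would first use Korn's identity $2\|\epstensor(\vecb{v})\|_{L^2}^2=\|\nabla\vecb{v}\|_{L^2}^2+\|\mathrm{div}\,\vecb{v}\|_{L^2}^2$, valid for $\vecb{v}\in\vecb{H}^1_0(\Omega)$, so that $a_1(\vecb{u}_h,\vecb{u}_h)=\mu\|\nabla\vecb{u}_h\|_{L^2}^2+\mu\|\mathrm{div}\,\vecb{u}_h\|_{L^2}^2$. The delicate point is that the volumetric term uses the modified form: by \eqref{eq:pi:prop:1} one has $\mathrm{div}(\Pi\vecb{u}_h)=\mathrm{div}_h\vecb{u}_h=\pi_0\mathrm{div}\,\vecb{u}_h$, hence $a_2(\Pi\vecb{u}_h,\Pi\vecb{u}_h)=\lambda\|\mathrm{div}_h\vecb{u}_h\|_{L^2}^2$, so that $a_1$ carries the full divergence while $a_2$ carries only its $P_0$-projection. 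I would reconcile this using the $L^2$-stability of $\pi_0$, namely $\|\mathrm{div}_h\vecb{u}_h\|_{L^2}\le\|\mathrm{div}\,\vecb{u}_h\|_{L^2}$, together with the bound $\|\mathrm{div}\,\vecb{u}_h\|_{L^2}\le\|\nabla\vecb{u}_h\|_{L^2}$ (itself an integration-by-parts consequence of the vanishing boundary data). Distinguishing the cases $\lambda\ge0$, $-\mu\le\lambda<0$ and $-2\mu<\lambda<-\mu$ then yields $a_1(\vecb{u}_h,\vecb{u}_h)+a_2(\Pi\vecb{u}_h,\Pi\vecb{u}_h)\ge\min\{2\mu+\lambda,\mu\}\,\|\nabla\vecb{u}_h\|_{L^2}^2$.

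The pressure term is where the continuity equation enters. Since $p_h=c\varrho_h\in P_0(\mathcal{T})$, one has $b(p_h,\vecb{u}_h)=-c\int_\Omega\varrho_h\,\mathrm{div}\,\vecb{u}_h\,dx$, and I would apply Lemma~\ref{lem:convexity_divergence_lemma} with the convex entropy $\phi(\varrho)=\varrho\log\varrho$ (shifted by a constant to render it positive, which leaves the relevant integral unchanged because $\int_\Omega\mathrm{div}\,\vecb{u}_h\,dx=0$). As $\varrho\phi'(\varrho)-\phi(\varrho)=\varrho$ and $\phi''(\varrho)=\varrho^{-1}$, and as the discrete continuity equation makes the upwind contribution $\int_\Omega\phi'(\varrho_h)\,\mathrm{div}_\text{upw}(\varrho_h\vecb{u}_h)\,dx$ vanish, the lemma gives precisely $b(p_h,\vecb{u}_h)=\tfrac{c}{2}\sum_{F_{KL}}\varrho_{KL}^{-1}\frac{|u_{K,F_{KL}}|}{|F_{KL}|}\|[[\varrho_h]]\|_{L^2(F_{KL})}^2\ge0$. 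For the right-hand side I would bound $\|\Pi\vecb{u}_h\|_{L^2}\le\|\vecb{u}_h\|_{L^2}+\|\vecb{u}_h-\Pi\vecb{u}_h\|_{L^2}\lesssim\|\nabla\vecb{u}_h\|_{L^2}$ using \eqref{eq:pi:prop:2} and the Poincaré inequality, so that $F(\Pi\vecb{u}_h)+G(\varrho_h,\Pi\vecb{u}_h)\lesssim(\|\vecb{f}\|_{L^2}+\|\varrho_h\|_{L^2}\|\vecb{g}\|_{L^\infty})\|\nabla\vecb{u}_h\|_{L^2}$. Collecting everything in the tested equation and discarding the non-negative $b$-term yields \eqref{eq:disc:stab:est:i} after dividing by $\|\nabla\vecb{u}_h\|_{L^2}$; keeping the $b$-term and absorbing $\|\nabla\vecb{u}_h\|_{L^2}^2$ by Young's inequality instead yields the dissipation bound \eqref{eq:disc:stab:est:ib}.

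Finally, for the pressure bound \eqref{eq:disc:stab:est:ii} (with $\vecb{g}\equiv\vecb{0}$, which removes the otherwise circular $\|\varrho_h\|_{L^2}$ on the right) I would split $p_h$ into its fixed mean value $\overline{p}_h=cM/|\Omega|$, controlled by $\|\overline{p}_h\|_{L^2}\lesssim c$ through the mass constraint, and its mean-zero part $p_h^0$. The latter I would estimate by the discrete inf-sup stability of the Bernardi--Raugel pair \cite{GirRav-nse}: choosing $\vecb{v}_h$ with $b(p_h^0,\vecb{v}_h)\gtrsim\|p_h^0\|_{L^2}^2$ and $\|\nabla\vecb{v}_h\|_{L^2}\lesssim\|p_h^0\|_{L^2}$, and noting $b(p_h,\vecb{v}_h)=b(p_h^0,\vecb{v}_h)$, the momentum equation gives $\|p_h^0\|_{L^2}\lesssim\|\vecb{f}\|_{L^2}+(\mu+|\lambda|)\|\nabla\vecb{u}_h\|_{L^2}$, which combined with \eqref{eq:disc:stab:est:i} (now reading $\|\nabla\vecb{u}_h\|_{L^2}\lesssim\|\vecb{f}\|_{L^2}$) gives $\|p_h\|_{L^2}=c\|\varrho_h\|_{L^2}\lesssim\|\vecb{f}\|_{L^2}+c$. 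I expect the main obstacle to be the viscous coercivity step: reconciling the full divergence produced by Korn's identity in $a_1$ with the projected divergence $\mathrm{div}_h$ appearing in the $\Pi$-modified form $a_2$, while tracking the sign of $\lambda$ so that the sharp constant $\min\{2\mu+\lambda,\mu\}$ (rather than a pointwise one requiring $\lambda>-2\mu/\dimension$) is recovered.
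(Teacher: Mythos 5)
Your proposal is correct and follows essentially the same route as the paper's proof: testing with $\vecb{u}_h$, obtaining coercivity with the constant $\min\{2\mu+\lambda,\mu\}$ from the algebraic identity relating $\|\epstensor(\cdot)\|$, $\|\nabla\cdot\|$ and $\|\mathrm{div}\,\cdot\|$ on $\vecb{H}^1_0(\Omega)$ combined with $\|\mathrm{div}_h\vecb{u}_h\|\le\|\mathrm{div}\,\vecb{u}_h\|$ and a sign distinction for $\lambda$, applying Lemma~\ref{lem:convexity_divergence_lemma} with $\phi(s)=s\log s$ to turn $b(p_h,\vecb{u}_h)$ into the non-negative jump sum, bounding the right-hand side via the stability of $\Pi$, and proving \eqref{eq:disc:stab:est:ii} by inf-sup plus the mass constraint for the pressure mean. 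Your extra care about shifting $\phi$ to make it non-negative (harmless since $\int_\Omega\mathrm{div}\,\vecb{u}_h\,dx=0$) is a small refinement the paper omits; otherwise the arguments coincide.
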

  The hidden constants in \(\lesssim\)
  depend neither
  on the mesh width \(h\), nor
  the viscosity parameters \(\mu\) and \(\lambda\) nor on \(c\).
  \begin{proof}  
  
  Testing the momentum equation with \(\vecb{u}_h\) yields
  \begin{align}\label{eqn:momentum_test}
    2\mu \| \epsilon(\vecb{u}_h) \|_{L^2}^2
    + \lambda \| \mathrm{div}_h \vecb{u}_h \|_{L^2}^2
    - \int_\Omega p_h \mathrm{div} \vecb{u}_h \, dx
    = \int \vecb{f} \cdot \Pi \vecb{u}_h \, dx + \int \varrho_h \vecb{g} \cdot \Pi \vecb{u}_h \, dx.
  \end{align}
  The approximation properties of \(\Pi\) yield
  \begin{align*}
    \int \vecb{f} \cdot \Pi \vecb{u}_h \, dx
    & \leq (C_F + h C_\Pi) \| \vecb{f} \|_{L^2} \| \nabla \vecb{u}_h \|_{L^2}.
  \end{align*}
  The integral with \(\vecb{g}\) is estimated similarly by
  \begin{align*}
    \int \varrho_h \vecb{g} \cdot \Pi \vecb{u}_h \, dx
    \leq (C_F + h C_\Pi) \| \varrho_h \|_{L^{2}(\Omega)} \| \vecb{g} \|_{L^{\infty}} \| \nabla \vecb{u}_h \|_{L^2}
  \end{align*}  
  and it remains to handle the integral on the left-hand side.
 Lemma~\ref{lem:convexity_divergence_lemma} shows, for \(\phi(s) := s \log(s)\) and with
  \(\mathrm{div}_\text{upw}(\varrho_h \vecb{u}_h)=0\) due to
  \eqref{eq:gradient:robust:scheme}, that
  \begin{align}
  \int_\Omega p_h \mathrm{div}(\vecb{u}_h) \, dx
  & = c \int_\Omega \varrho_h \mathrm{div}(\vecb{u}_h) \, dx\nonumber\\
  & = c \int_\Omega (1+\log(\varrho_h)) \mathrm{div}_\text{upw}(\varrho_h \vecb{u}_h) \, dx
  - c \sum_{F_{KL} \in \mathcal{F}(\Omega)} \frac{\phi^{\prime\prime}(\varrho_{KL})}{2 \lvert F_{KL} \rvert} \lvert u_{K,F_{KL}} \rvert  \| [[ \varrho_h ]] \|^2_{L^2(F_{KL})}\nonumber\\
  & = - c \sum_{F_{KL} \in \mathcal{F}(\Omega)} \frac{\varrho_{KL}^{-1}}{2 \lvert F_{KL} \rvert} \lvert u_{K,F_{KL}} \rvert  \| [[ \varrho_h ]] \|^2_{L^2(F_{KL})} \leq 0.\label{eqn:jumpterm}
  \end{align}

Assume that \(\lambda \geq 0\). Then, it holds
\begin{align*}
  \lambda \| \mathrm{div}_h \vecb{u} \|^2_{L^2}
  \geq 0
\end{align*}  
and hence
\begin{align*}
    \mu \| \nabla \vecb{u}_h \|^2_{L^2} \leq 2\mu \| \epsilon(\vecb{u}_h) \|_{L^2}^2
    \lesssim \left( \| \vecb{f} \|_{L^2} + \| \varrho_h \|_{L^{2}(\Omega)} \| \vecb{g} \|_{L^{\infty}} \right) \| \nabla \vecb{u}_h \|_{L^2}.
  \end{align*}
  Division by \(\| \nabla \vecb{u}_h \|_{L^2}\) concludes the proof in this case.

In the case \(0 > \lambda > -2\mu\), elementary vector calculus identities yield
\begin{align*}
  \min\lbrace\mu, 2\mu + \lambda\rbrace \| \nabla \vecb{u}_h \|^2_{L^2}
  \leq \mu \| \mathrm{rot} \vecb{u} \|^2_{L^2} + (2\mu + \lambda) \| \mathrm{div} \vecb{u} \|^2_{L^2}
  = 2 \mu \| \epsilon(\vecb{u}_h) \|^2_{L^2}
  + \lambda \| \mathrm{div} \vecb{u} \|^2_{L^2}.
\end{align*}
Moreover, it holds
\begin{align*}
  0 \geq \lambda \| \mathrm{div}_h \vecb{u} \|^2_{L^2} \geq \lambda \| \mathrm{div} \vecb{u} \|^2_{L^2}
\end{align*}
and hence
\begin{align*}
    \min\lbrace\mu, 2\mu + \lambda\rbrace \| \nabla \vecb{u}_h \|^2_{L^2} \leq 2\mu \| \epsilon(\vecb{u}_h) \|_{L^2}^2 + \lambda \| \mathrm{div}_h \vecb{u} \|^2_{L^2}
    \lesssim \left( \| \vecb{f} \|_{L^2} + \| \varrho_h \|_{L^{2}(\Omega)} \| \vecb{g} \|_{L^\infty} \right) \| \nabla \vecb{u}_h \|_{L^2}.
  \end{align*}
  Division by \(\| \nabla \vecb{u}_h \|_{L^2}\) concludes the proof of
  \eqref{eq:disc:stab:est:i}. The proof of \eqref{eq:disc:stab:est:ib}
  follows from a combination of \eqref{eqn:momentum_test} and \eqref{eqn:jumpterm} together with the already proven estimate \eqref{eq:disc:stab:est:i}, i.e.
  \begin{align*}
     \sum_{F_{KL} \in \mathcal{F}(\Omega)} \varrho_{KL}^{-1}\frac{\lvert u_{K,F_{KL}} \rvert}{\lvert F_{KL} \rvert}   \| [[ \varrho_h ]] \|^2_{L^2(F_{KL})}
    & \lesssim c^{-1} \left( \| \vecb{f} \|_{L^2} + \| \varrho_h \|_{L^{2}(\Omega)} \| \vecb{g} \|_{L^{\infty}} \right) \| \nabla \vecb{u}_h \|_{L^2}\\
    & \lesssim c^{-1} \min \lbrace 2\mu + \lambda, \mu \rbrace^{-1} \left( \| \vecb{f} \|_{L^2} + \| \varrho_h \|_{L^{2}(\Omega)} \| \vecb{g} \|_{L^{\infty}} \right)^2.
  \end{align*}
  
  For the proof of \eqref{eq:disc:stab:est:ii}, consider a test function \(\vecb{v}_h\) with \(\mathrm{div}_h(\vecb{v}_h) = p_h - \overline{p}_h\), where \( \overline{p}_h := \lvert \Omega\rvert^{-1} \int_\Omega p_h \, dx\), 
  and \(\| \nabla \vecb{v}_h \|_{L^2} \lesssim \| p_h - \overline{p}_h \|_{L^2}\), using discrete inf-sup stability. Inserting this test function in the momentum equation and using also bounds from \eqref{eq:disc:stab:est:i}, it follows the estimate
  \begin{align}
    \| p_h - \overline{p}_h \|_{L^2}^2 & = \lambda \int_\Omega \mathrm{div}_h(\vecb{u}_h) \mathrm{div}_h(\vecb{v}_h) \,dx
    + 2\mu \int_\Omega \epsilon(\vecb{u}_h) : \epsilon(\vecb{v}_h) \,dx
    - \int \vecb{f} \cdot \Pi \vecb{v}_h \, dx\nonumber\\
    & \lesssim \| \vecb{f} \|_{L^2} \| p_h - \overline{p}_h \|_{L^2} \label{eqn:estimate:p-pmean}
  \end{align}
  where the constant \(C\) also depends on \(\| \vecb{f} \|_{L^2}\) (note that we assumed here that \(\vecb{g} \equiv 0\)).
  For the following estimate we also need a bound on \(\| \overline{p}_h \|_{L^2}\) that can be obtained due to
  \begin{align} \label{eqn:estimate:pmean}
    \overline{p}_h = \frac{1}{\lvert \Omega \rvert}\int_\Omega p_h \, dx = \frac{c}{\lvert \Omega \rvert} \int_\Omega \varrho_h \, dx = \frac{cM}{\lvert \Omega \rvert}.
  \end{align}
  A Pythagoras argument and the combination of \eqref{eqn:estimate:p-pmean} and \eqref{eqn:estimate:pmean} results in
  \begin{align*}
    \| p_h \|^2_{L^2} = \| p_h - \overline{p}_h \|^2_{L^2} + \| \overline{p}_h \|^2_{L^2}
    \lesssim \| \vecb{f} \|_{L^2}^2 + \frac{(cM)^2}{\lvert \Omega \rvert}  \lesssim \| \vecb{f} \|_{L^2}^2 + c^2.
  \end{align*}
  This concludes
   the proof of \eqref{eq:disc:stab:est:ii}.
    \end{proof}

\begin{lemma}[Existence of a discrete solution]
On every (fixed) shape-regular mesh in the sense of
Section \ref{sec:discrete_scheme}, the discrete nonlinear equation system
\eqref{eq:gradient:robust:scheme}
has at least one solution.
\end{lemma}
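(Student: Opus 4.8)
The plan is to recast the coupled system \eqref{eq:gradient:robust:scheme} as a fixed-point equation for the discrete density alone and to invoke Brouwer's fixed-point theorem on the compact, convex set of admissible densities
\[
  K := \bigl\{ \varrho_h \in P_0(\mathcal{T}) : \varrho_h \geq 0, \ \textstyle\int_\Omega \varrho_h \, dx = M \bigr\}.
\]
After identifying $P_0(\mathcal{T})$ with $\mathbb{R}^{\#\mathcal{T}}$, the set $K$ is a (scaled) standard simplex, hence nonempty, compact and convex. Given $\varrho_h \in K$, I first set $p_h := c \varrho_h$ and solve the momentum equation of \eqref{eq:gradient:robust:scheme} for $\vecb{u}_h = \vecb{u}_h(\varrho_h) \in \vecb{V}_h$. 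This velocity problem is linear in $\vecb{u}_h$ and uniquely solvable, because the bilinear form $a_1(\cdot,\cdot) + a_2(\Pi \cdot, \Pi \cdot)$ is coercive on $\vecb{V}_h$: the conforming Bernardi--Raugel element satisfies a discrete Korn inequality, and using $\mathrm{div}(\Pi \vecb{v}_h) = \mathrm{div}_h \vecb{v}_h$ from \eqref{eq:pi:prop:1} together with the elementary identity already exploited in the proof of Lemma~\ref{lem:discrete-stability}, one obtains $a_1(\vecb{v}_h,\vecb{v}_h) + a_2(\Pi \vecb{v}_h, \Pi \vecb{v}_h) \gtrsim \min\{\mu, 2\mu+\lambda\}\,\| \nabla \vecb{v}_h \|_{L^2}^2 > 0$ for $\lambda > -2\mu$.

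With $\vecb{u}_h(\varrho_h)$ in hand I assemble the upwind matrix $D = D(\vecb{u}_h(\varrho_h))$ from \eqref{eqn:definition_matrixD} and define, for the fixed (arbitrary) step size $\tau > 0$,
\[
  \Phi(\varrho_h) := (M + \tau D)^{-1} M \varrho_h .
\]
By Lemma~\ref{lem:mass_conservation} it holds $\Phi(\varrho_h) \geq 0$ and $\int_\Omega \Phi(\varrho_h)\,dx = M$, so $\Phi$ maps $K$ into itself. Moreover $\Phi$ is continuous on $K$: the velocity solve depends affinely, hence continuously, on $\varrho_h$; the upwind fluxes $u_{T,F}^{\pm}$ are (globally Lipschitz) continuous functions of $\vecb{u}_h$, so $D$ depends continuously on $\varrho_h$; and since $M + \tau D$ is always a diagonally dominant $M$-matrix, it is invertible with inverse depending continuously on $D$. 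Brouwer's fixed-point theorem then provides a $\varrho_h^\star \in K$ with $\Phi(\varrho_h^\star) = \varrho_h^\star$.

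It remains to read off a solution of the full system. The fixed-point identity $(M + \tau D)\varrho_h^\star = M \varrho_h^\star$ gives $\tau D \varrho_h^\star = 0$, hence $D \varrho_h^\star = 0$, which by \eqref{eqn:definition_matrixD} is exactly $\mathrm{div}_\text{upw}(\varrho_h^\star \vecb{u}_h^\star) = 0$ for $\vecb{u}_h^\star := \vecb{u}_h(\varrho_h^\star)$. Together with $p_h^\star := c \varrho_h^\star$ and the momentum equation that $\vecb{u}_h^\star$ satisfies by construction, the triple $(\vecb{u}_h^\star, p_h^\star, \varrho_h^\star)$ solves all three equations of \eqref{eq:gradient:robust:scheme}. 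Note that the auxiliary parameter $\tau>0$ enters neither $K$ nor the resulting solution, so the outcome is independent of it.

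The main obstacle is the continuity of $\Phi$ across the upwinding: the branching $\varrho_F^{\text{upw}} \in \{\varrho_h|_T, \varrho_h|_L\}$ makes $D$ a priori only piecewise smooth in $\vecb{u}_h$. This is benign, however, because the positive and negative parts $u_{T,F}^{\pm}$ are globally continuous in $\vecb{u}_h$, so $D$ is in fact (locally Lipschitz) continuous on all of $\vecb{V}_h$; combined with the uniform invertibility of the $M$-matrix $M + \tau D$ on the compact set $K$, continuity of $\Phi$ follows. I emphasize that, unlike in the forthcoming convergence analysis, no mesh-independence of the constants is needed here, since on a fixed mesh the compactness of $K$ in the finite-dimensional space $P_0(\mathcal{T})$ already supplies the a priori control required by Brouwer's theorem.
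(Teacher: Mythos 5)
Your proposal is correct, and it takes a genuinely different route from the paper. The paper applies Brouwer to the full composed map $(\vecb{u}_n,p_n,\varrho_n)\mapsto(\vecb{u}_{n+1},p_{n+1},\varrho_{n+1})$ of the iterative algorithm from Section~\ref{sec:algorithm} (density update with the \emph{old} velocity, then pressure, then velocity), and must therefore check that all three iterates stay in a bounded convex set; this forces it to rederive a velocity bound in a modified form, because the intermediate iterates do not satisfy $\mathrm{div}_\text{upw}(\varrho_{n+1}\vecb{u}_{n+1})=0$ and the term $(p_{n+1},\mathrm{div}\,\vecb{u}_{n+1})$ can no longer be discarded via Lemma~\ref{lem:convexity_divergence_lemma}, so it is instead estimated crudely using equivalence of norms on the fixed mesh. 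Your reduction eliminates the velocity and pressure from the fixed-point variable altogether: since $\vecb{u}_h$ is uniquely determined by $\varrho_h$ through the coercive momentum solve, the fixed-point map lives on the scaled simplex $K$, which is compact and convex for free, and self-mapping is immediate from Lemma~\ref{lem:mass_conservation} --- no a priori velocity bound is needed at all. You are also more careful than the paper on one point: the map $\varrho_h\mapsto(M+\tau D)^{-1}M\varrho_h$ is \emph{not} linear once $D$ depends on $\vecb{u}_h(\varrho_h)$ (the paper calls it ``linear and continuous''), and your explicit appeal to the global continuity of the positive/negative parts $u_{T,F}^{\pm}$ is exactly what is needed to make the continuity argument airtight. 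The only thing the paper's formulation buys in exchange is that its existence proof is literally a statement about the algorithm it actually runs; your final remark that ``the resulting solution'' is independent of $\tau$ is slightly overstated (different $\tau$ could select different fixed points in the absence of uniqueness), but the relevant claim --- that any fixed point for any $\tau>0$ solves \eqref{eq:gradient:robust:scheme} --- is exactly what you prove.
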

\begin{proof}
The existence of a discrete weak solution \((\vecb{u}_h,p_h,\varrho_h) \in \vecb{V}_h \times Q_h \times Q_h\) is proved by the
Brouwer fixed-point theorem.
We start the algorithm presented in Subsection \ref{sec:algorithm}
with \(\varrho_{-1} \equiv M/\lvert \Omega \rvert\)
and $\vecb{u}_{-1} = \vecb{0}$, $p_{-1}=\varphi(\varrho_{-1})$.
Obviously, the discrete
start value \((\vecb{u}_{-1},p_{-1},\varrho_{-1}) \in \vecb{V}_h \times Q_h \times Q_h\) lies in a
finite-dimensional product space of convex sets
with finite diameter that is itself convex.
Now a mapping
\begin{align*}
f: \vecb{V}_h \times Q_h \times Q_h &\to \vecb{V}_h \times Q_h \times Q_h,\\
(\vecb{u}_n,p_n,\varrho_n) &\mapsto f(\vecb{u}_n,p_n,\varrho_n) :=
(\vecb{u}_{n+1},p_{n+1},\varrho_{n+1})
\end{align*}
is constructed by composition
of \eqref{eq:update:disc:rho}, \eqref{eq:update:disc:p}
and \eqref{eq:update:disc:u} where an (arbitrary)
stepsize $\tau > 0$ is fixed.
Then, the mapping defined by \eqref{eq:update:disc:rho}
is linear and continuous, since $(M + \tau D)$ is an invertible matrix.
The mapping defined by \eqref{eq:update:disc:p} is again continuous.
Finally, the mapping defined by \eqref{eq:update:disc:u} is
linear and continuous, since the assumptions on the viscosities
$\mu$ and $\lambda$ assure that the discrete bilinear form
$a_1(\vecb{u}_h,\vecb{v}_h) + a_{2}(\Pi \vecb{u}_h, \Pi \vecb{v}_h)$
is coercive. Therefore, the composed
mapping $f$ constructed in the algorithm
in Subsection \ref{sec:algorithm} is continuous.

Due to discrete mass conservation in \eqref{eq:update:disc:rho} (see
Lemma~\ref{lem:mass_conservation}), it follows
$$\|\rho_{n+1}\|_{L^1}=\|\rho_{n}\|_{L^1}=\ldots=\|\rho_{-1}\|_{L^1}=M.$$
Due to this and the equivalence of all norms in finite dimensions,
it holds $\|p_n \|_{L^2} = c\| \rho_n  \|_{L^{2}} \leq C(h)$ for all \(n \geq -1\).
Hence, all $(p_n,\varrho_n)$ for \(n \geq -1\) lie in the same
convex set.
Finally, similar to the proof of \eqref{eq:disc:stab:est:i}, a discrete
bound can be proved for $\| \nabla \vecb{u}_{n+1}\|_{L^2}$.
The only difference is that one cannot assume that $\vecb{u}_{n+1}$ fulfills the discrete mass conservation in
\eqref{eq:gradient:robust:scheme} with $\mathrm{div}_\text{upw}(\varrho_{n+1} \vecb{u}_{n+1}) = 0$.
Therefore, the term $(p_{n+1}, \mathrm{div}(\vecb{u}_{n+1}))$ has to be estimated
in a different way.
However, since the grid is fixed, the argument above yields
$$
  \left | (p_{n+1}, \mathrm{div}(\vecb{u}_{n+1})) \right |
    \leq \| p_{n+1} \|_{L^2} \| \nabla \vecb{u}_{n+1} \|_{L^2}
    \leq C(h) \| \nabla \vecb{u}_{n+1} \|_{L^2},
$$
and one derives a similar estimate like in \eqref{eq:disc:stab:est:i}.
Therefore, $f$ is a continuous function that maps a convex set into itself.
According to the Brouwer fixed point theorem, this mapping has a fixed-point that is a solution of \eqref{eq:gradient:robust:scheme}. 
\end{proof}

\section{Convergence of the scheme}\label{sec:convergence}
This section proves convergence of the discrete solutions
to a weak solution of \eqref{eqn:compressible_weak}.

\begin{theorem}\label{thm:convergence}
We assume that it holds $\bg = \vecb{0}$.
  Consider a sequence of shape-regular triangulations \((\mathcal{T}_k)_{k \in \mathbb{N}})\) with mesh width \(h_k \rightarrow 0\).
  Let \((\vecb{u}_k,p_k,\varrho_k)\) denote the
  corresponding discrete solutions of \eqref{eq:gradient:robust:scheme} on the meshes \(\mathcal{T}_k\). Then,
 up extraction of a subsequence, it holds
 \begin{align*}
   && (i)
   & \text{ the sequence } (\vecb{u}_k)_{k \in \mathbb{N}}
   \text{ converges weakly/strongly in } \vecb{H}^1_0(\Omega) / \vecb{L}^2(\Omega) \text{ to a limit } \vecb{u} \in \vecb{H}^1_0(\Omega),\\
   && (ii)
   & \text{ the sequence } (p_k)_{k \in \mathbb{N}} = (c \varrho_k)_{k \in \mathbb{N}}
   \text{ converges weakly in } L^2(\Omega) \text{ to a limit } p = c \varrho \in L^2(\Omega), \\   
   && (iii)
   & \text{ the limit } (\vecb{u},p,\varrho) \text{ is a solution of  \eqref{eqn:compressible_weak}.}  
 \end{align*}
\end{theorem}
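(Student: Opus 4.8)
The plan is to combine the $h$-independent \emph{a priori} bounds of Lemma~\ref{lem:discrete-stability} (valid here since $\bg=\vecb{0}$) with standard compactness to extract convergent subsequences, and then to pass to the limit separately in the linear momentum equation and in the nonlinear continuity equation. First I would extract subsequences. By \eqref{eq:disc:stab:est:i} the family $(\bu_k)$ is bounded in $\bH^1_0(\Omega)$ uniformly in $k$, so by reflexivity a subsequence converges weakly in $\bH^1_0(\Omega)$ to some $\bu$, and by the compact Rellich embedding $\bH^1_0(\Omega)\hookrightarrow\hookrightarrow\bL^2(\Omega)$ the same subsequence converges strongly in $\bL^2(\Omega)$; since $\bV_h\subset\bH^1_0(\Omega)$ the limit lies in $\bH^1_0(\Omega)$, giving (i). By \eqref{eq:disc:stab:est:ii} the family $(p_k)=(c\varrho_k)$ is bounded in $L^2(\Omega)$, hence a further subsequence converges weakly in $L^2(\Omega)$ to $p=c\varrho$, proving (ii). The constraints pass to the weak limit: $\varrho\ge0$ (weak $L^2$-limit of nonnegative functions), $\int_\Omega\varrho\,dx=M$ (testing against the constant $1$), and the relation $p=c\varrho$ survives because it is linear.

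Next I would pass to the limit in the momentum equation. Fix $\bv\in C_c^\infty(\Omega)^d$ and let $\bv_k:=I_h\bv\in\bV_h$ be its Bernardi--Raugel interpolant, so that $\bv_k\to\bv$ strongly in $\bH^1_0(\Omega)$, $\pi_0\mathrm{div}\,\bv_k\to\mathrm{div}\,\bv$ in $L^2(\Omega)$, and, by \eqref{eq:pi:prop:2}, $\Pi\bv_k\to\bv$ strongly in $\bL^2(\Omega)$. Inserting $\bv_k$ into \eqref{eq:gradient:robust:scheme} (with $\bg=\vecb{0}$, so the $G$-term drops out), each term is a product of a weakly convergent factor carried by $\bu_k$ or $p_k$ against a strongly convergent factor built from $\bv_k$: using $\mathrm{div}(\Pi\bu_k)=\pi_0\mathrm{div}\,\bu_k\rightharpoonup\mathrm{div}\,\bu$ one gets $a_1(\bu_k,\bv_k)\to a_1(\bu,\bv)$, $a_2(\Pi\bu_k,\Pi\bv_k)\to a_2(\bu,\bv)$, $b(p_k,\bv_k)\to b(p,\bv)$ and $F(\Pi\bv_k)\to F(\bv)$. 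By density of $C_c^\infty(\Omega)^d$ this yields the first equation of \eqref{eqn:compressible_weak} for all $\bv\in\bH^1_0(\Omega)$.

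The hard part is the continuity equation, since it couples the only weakly convergent density against the velocity through the nonlinear upwind operator. The goal is $\int_\Omega\varrho\,\bu\cdot\nabla\phi\,dx=0$ for all $\phi\in W^{1,\infty}(\Omega)$ (by density it suffices to treat smooth $\phi$), and I would obtain it by squeezing $\int_\Omega\varrho_k\bu_k\cdot\nabla\phi\,dx$ between two limits. On the one hand, since $\varrho_k\rightharpoonup\varrho$ weakly in $L^2$ and $\bu_k\to\bu$ strongly in $L^2$, the product satisfies $\varrho_k\bu_k\rightharpoonup\varrho\bu$ weakly in $L^1$, hence $\int_\Omega\varrho_k\bu_k\cdot\nabla\phi\to\int_\Omega\varrho\bu\cdot\nabla\phi$; this is precisely where the strong $\bL^2$ compactness of the velocity is indispensable. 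On the other hand, I would test the discrete relation $\mathrm{div}_\text{upw}(\varrho_k\bu_k)=0$ with the piecewise-constant nodal values $(\phi(\bx_T))_T$ and rewrite it by discrete summation by parts as $\sum_{F_{KL}\in\mathcal{F}(\Omega)}\varrho^\text{upw}_{F_{KL}}u_{K,F_{KL}}\big(\phi(\bx_K)-\phi(\bx_L)\big)=0$ (boundary fluxes vanish because $\bu_k\in\bH^1_0$). Comparing this vanishing sum with $\int_\Omega\varrho_k\bu_k\cdot\nabla\phi$ produces a consistency error with two contributions: a smooth-interpolation error of order $\mathcal{O}(h)$ controlled by $\|\nabla\phi\|_{L^\infty}$ together with the bounds on $\|\nabla\bu_k\|_{L^2}$ and $\|\varrho_k\|_{L^2}$, and an upwind numerical-diffusion term of the form $\sum_{F_{KL}}|[[\varrho_k]]_{F_{KL}}|\,|u_{K,F_{KL}}|\,|\phi(\bx_K)-\phi(\bx_L)|$. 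Using $|\phi(\bx_K)-\phi(\bx_L)|\lesssim h\|\nabla\phi\|_{L^\infty}$ and a Cauchy--Schwarz split against the weight $\varrho_{KL}^{-1}$, the latter is bounded by $h\|\nabla\phi\|_{L^\infty}$ times the square root of the left-hand side of \eqref{eq:disc:stab:est:ib} times the controllable factor $\big(\sum_{F_{KL}}\varrho_{KL}|u_{K,F_{KL}}|\big)^{1/2}$; as \eqref{eq:disc:stab:est:ib} is uniformly bounded in $k$, the whole error tends to $0$ as $h_k\to0$, so $\int_\Omega\varrho\bu\cdot\nabla\phi=0$, which is (iii).

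I expect the genuine difficulty to be concentrated in this last step, and to be two-fold: organizing the discrete-to-continuous comparison so that the upwind diffusion is isolated \emph{exactly} in the form controlled by the jump estimate \eqref{eq:disc:stab:est:ib}, and coping with the weight $\varrho_{KL}^{-1}$ arising from $\phi''(s)=1/s$ for the entropy $\phi(s)=s\log s$, which degenerates near vacuum. A uniform control of the factor $\sum_{F_{KL}}\varrho_{KL}|u_{K,F_{KL}}|$ and, if necessary, a truncation argument keeping $\varrho$ away from $0$ would be the technical crux, following the compactness framework of \cite{gal-09-conv,MR2600538}.
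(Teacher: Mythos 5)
Your proposal is correct and follows the paper's strategy in all essentials: the same stability bounds from Lemma~\ref{lem:discrete-stability} drive the compactness extraction, the momentum equation is passed to the limit by pairing weakly convergent solution factors with strongly convergent interpolants of a smooth test function (the paper uses the $\vecb{H}^1_0$-best approximation rather than the Bernardi--Raugel interpolant, which is immaterial), and the continuity equation is handled by exactly the weighted Cauchy--Schwarz mechanism you describe, with the entropy jump estimate \eqref{eq:disc:stab:est:ib} absorbing the factor weighted by $\varrho_{KL}^{-1}$. The only genuine difference is organizational: where you perform a direct discrete summation by parts of $\mathrm{div}_\text{upw}(\varrho_k\vecb{u}_k)=0$ against nodal values of $\phi$, the paper instead forms the elementwise $\mathrm{RT}_0$ interpolant $\vecb{q}_h$ of the upwind mass flux, which is $\vecb{H}(\mathrm{div},\Omega)$-conforming and exactly divergence-free, and then integrates by parts \emph{continuously} against a $P_1$ test function. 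This device makes the summation by parts exact and cleanly splits the consistency error into the two pieces you identify (an $\mathcal{O}(h)$ interpolation term $\|\varrho_h\|_{L^2}\|I_h^{\mathrm{RT}_0}\vecb{u}_h-\vecb{u}_h\|_{L^2}$ and the upwind-diffusion sum $A=\sum_F h_F\lvert[[\varrho_h]]_F\rvert\,\lvert\vecb{u}_F\rvert$), but it buys nothing beyond bookkeeping convenience.

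Two remarks on the point you flag as the technical crux. First, the remaining factor $\bigl(\sum_{F}\varrho_{KL}\lvert u_{K,F}\rvert\bigr)^{1/2}$ is controlled in the paper by the trace--inverse estimate $\lvert\vecb{u}_F\rvert\lesssim h_F^{d/2-1}\|\vecb{u}_h\|_{L^2(T_F)}$ followed by one more Cauchy inequality, yielding $\sum_F h_F^{d+2}\varrho_{KL}^2\lesssim h^2\|\varrho_h\|_{L^2}^2$ and hence $A\lesssim h^{1/2}$; your scaling works out to the same rate, so this step closes without additional ideas. Second, no vacuum truncation is needed: the weight $\varrho_{KL}^{-1}$ appears only in the factor that is a priori bounded by \eqref{eq:disc:stab:est:ib}, while the complementary factor carries $\varrho_{KL}$ and is controlled by $\|\varrho_h\|_{L^2}$, so the degeneracy near $\varrho=0$ cancels between the two halves of the Cauchy--Schwarz split exactly as in the paper.
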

\begin{proof}
  The weak convergence and the existence of the limit \((\vecb{u},p,\varrho) \in \vecb{H}^1_0(\Omega) \times L^2(\Omega) \times L^{2}(\Omega)\) follows from Lemma~\ref{lem:discrete-stability} and
  standard arguments from linear functional analysis.
Hence, it remains to prove (iii). 
  
  \medskip
  \textbf{Step 1. $(\vecb{u},p)$ satisfy the momentum equation, i.e.,\
  the first equation of \eqref{eqn:compressible_weak}.}
  
  Consider an arbitrary test function \(\vecb{v} \in \vecb{C}^{\infty}_0\), which is dense in \(\vecb{H}^1_0(\Omega\)), and
  define $\vecb{v}_k \in \vecb{V}_k$
    on \(\mathcal{T}_k\) as its best approximation in the
    $\vecb{H}^1_0$ norm, i.e., it holds for all
    $\vecb{w}_h \in \vecb{V}_h$:
    $(\nabla \vecb{v}_h, \nabla \vecb{w}_h) = (\nabla \vecb{v}, \nabla \vecb{w}_h)$. From standard arguments follows
    the strong convergence
  \begin{align}\label{eqn:strong_convergence_vk}
    \vecb{v}_h \to \vecb{v} \text{ in } \vecb{H}^1_0(\Omega).
  \end{align}
  This strong convergence and the weak convergence of \((\vecb{u}_k)\)
  to \(\vecb{u}\) in \(\vecb{H}^1_0(\Omega)\) allows to conclude
  \begin{align*}
    \int_\Omega \epsilon(\vecb{u}_k) : \epsilon(\vecb{v}_k) \, dx
    \ \rightarrow \
    \int_\Omega \epsilon(\vecb{u}) : \epsilon(\vecb{v}) \, dx.
  \end{align*} 
  Similarly, \eqref{eqn:strong_convergence_vk} and the weak convergence
  of \((p_k)\) to \(p\) in \(L^2(\Omega)\) yield
  \begin{align*}
    \int_\Omega p_k \mathrm{div}(\vecb{v}_k) \, dx
    \ \rightarrow \
    \int_\Omega p \mathrm{div}(\vecb{v}) \, dx.
  \end{align*} 
  Since also \(\| \mathrm{div}_h (\vecb{v}) - \mathrm{div}(\vecb{v})\|_{L^2(\Omega)}
  \rightarrow 0\), it follows
  \begin{align*}
    \int_\Omega \mathrm{div}_h(\vecb{u}_k) \mathrm{div}_h(\vecb{v}_k) \, dx
    &= \int_\Omega \mathrm{div}(\vecb{u}_k) \mathrm{div}(\vecb{v}) \, dx + \int_\Omega \mathrm{div}(\vecb{u}_k) \left(\mathrm{div}_h(\vecb{v}_k) - \mathrm{div}(\vecb{v})\right)\, dx\\
    &\rightarrow \
    \int_\Omega \mathrm{div}(\vecb{u}) \mathrm{div}(\vecb{v}) \, dx.
  \end{align*}
  It remains to show convergence of the right-hand side integrals, which follows
  again by \eqref{eqn:strong_convergence_vk} and the weak convergence
  of \((\varrho_k)\) to \(\varrho\) in \(L^2(\Omega)\), i.e.
  \begin{align*}
    \int_\Omega \vecb{f} \cdot \vecb{v}_k \, dx
    \ \rightarrow \
    \int_\Omega \vecb{f} \cdot \vecb{v} \, dx.
  \end{align*} 
  The combination of all convergence results concludes the proof of Step 1.

  \medskip
  \textbf{Step 2. $(\vecb{u},\varrho)$ satisfy the continuity equation, i.e.\
  the second equation of \eqref{eqn:compressible_weak}.}

We define on every element $T  \in \mathcal{T}$
\begin{equation}
  \vecb{q}_h|_T := I_h^{\mathrm{RT}_0} (\varrho_{\text{upw}} \vecb{u}_h|_T),
\end{equation}
which is $\vecb{H}(\mathrm{div}, \Omega)$-conforming and divergence-free.
For an arbitrary scalar $P_1$ function $\psi_h$, it holds
\begin{equation*}
  0  = (\psi_h, \mathrm{div} \, \vecb{q}_h) \\
     = - (\vecb{q}_h, \nabla \psi_h) \\
     = -( \vecb{q}_h - \varrho_h \vecb{u}_h, \nabla \psi_h)
    - (\varrho_h \vecb{u}_h, \nabla \psi_h).
\end{equation*}
We estimate now the term
\begin{equation*}
\begin{split}
 \left | ( \vecb{q}_h - \varrho_h \vecb{u}_h, \nabla \psi_h) \right | &
 = \left | \sum_T \nabla \psi_h|_T \cdot 
  \int_T (\vecb{q}_h - \varrho_h \vecb{u}_h) \, dx \right |.
\end{split}
\end{equation*}
For the term under the integral we get
\begin{equation*}
\begin{split}
   \int_T (\vecb{q}_h - \varrho_h \vecb{u}_h) \, dx
     & =  \int_T (\vecb{q}_h - \varrho_h I_h^{\mathrm{RT}_0} \vecb{u}_h) \, dx + \int_T \varrho_h (I_h^{\mathrm{RT}_0} \vecb{u}_h - \vecb{u}_h) \, dx.
\end{split}
\end{equation*}
Thus one obtains by the triangle inequality
\begin{equation}
\begin{split}
  \left | ( \vecb{q}_h - \varrho_h \vecb{u}_h, \nabla \psi_h) \right |
    & \leq  \sum_T \left | \nabla \psi_h|_T \cdot \left ( \int_T (\vecb{q}_h - \varrho_h I_h^{\mathrm{RT}_0} \vecb{u}_h) \, dx + \int_T \varrho_h (I_h^{\mathrm{RT}_0} \vecb{u}_h - \vecb{u}_h) \, dx  \right ) \right | \\
     & \leq C \sum_T \left ( \| \vecb{q}_h - \varrho_h I_h^{\mathrm{RT}_0} \vecb{u}_h \|_{L^1(T)} 
        + \| \varrho_h (I_h^{\mathrm{RT}_0} \vecb{u}_h - \vecb{u}_h) \|_{L^1(T)} \right )  \\
        & \leq
        C \sum_T \| \vecb{q}_h - \varrho_h I_h^{\mathrm{RT}_0} \vecb{u}_h \|_{L^1(T)} + C \| \varrho_h \|_{L^2} \, \| I_h^{\mathrm{RT}_0} \vecb{u}_h
        -  \vecb{u}_h \|_{L^2}.
    \end{split}
\end{equation}
The term $\| \varrho_h \|_{L^2} \, \| I_h^{\mathrm{RT}_0} \vecb{u}_h
        -  \vecb{u}_h \|_{L^2}$ converges to $0$, according to
        the interpolation properties of $I_h^{\mathrm{RT}_0}$
        and the stability estimate for $\| \varrho_h \|_{L^2}$.
It remains to estimate $\sum_T \| \vecb{q}_h - \varrho_h I_h^{\mathrm{RT}_0} \vecb{u}_h \|_{L^1(T)}$.
Interpolation properties of $I_h^{\mathrm{RT}_0}$ yield
\begin{align*}
  \sum_T \| \vecb{q}_h - \varrho_h I_h^{\mathrm{RT}_0} \vecb{u}_h \|_{L^1(T)}
  & \lesssim \sum_T h_T \sum_{F \in \mathcal{F}(T)} \left \lvert (\varrho^\text{upw}_F - \varrho_h|_T) \int_F \vecb{u}_h \cdot \vecb{n}_F \, ds \right\rvert\\
  & \lesssim \sum_{F \in \mathcal{F}(\Omega)} h_F \lvert [[ \varrho_h ]]_F \rvert \, \lvert \vecb{u}_F \rvert := A.
\end{align*}
It holds \(A \rightarrow 0\) which can be proven as follows. A Cauchy inequality shows
\begin{align}\label{eqn:estimateA_case1}
  A \lesssim \left( \sum_{F \in \mathcal{F}(\Omega)}  \lvert \vecb{u}_F \rvert \varrho_{KL}^{-1} [[ \varrho_h ]]^2_F \right)^{1/2} \left(\sum_{F \in \mathcal{F}(\Omega)} h_F^2 \lvert \vecb{u}_F \rvert \varrho_{KL} \right)^{1/2}.
  \end{align}
The left sum is bounded by \eqref{eq:disc:stab:est:ib} and it remains to show that the second sum converges to zero. 
A H\"older inequality, a trace inequality and a inverse inequality on some
neighboring simplex \(T_F\) of \(F\) show
\begin{align}\label{eqn:bound_for_uF}
  \lvert \vecb{u}_F \rvert
  \lesssim \| \vecb{u} \|^{1/2}_{L^2(T_F)} \| \nabla \vecb{u} \|^{1/2}_{L^2(T_F)} \| 1 \|_{L^2(F)}
  \lesssim h_F^{d/2-1} \| \vecb{u} \|_{L^2(T_F)}.
\end{align}
Then, another Cauchy inequality and some overlap arguments yield
\begin{align*}
  \left(\sum_{F \in \mathcal{F}(\Omega)} h_F^2 \lvert u_F \rvert \varrho_{KL} \right)^{1/2}
  & \leq \left(\sum_{F \in \mathcal{F}(\Omega)} \| \vecb{u}\|_{L^2(T_F)}^2 \right)^{1/4} \left( \sum_{F \in \mathcal{F}(\Omega)} h_F^{d+2} \varrho_{KL}^2 \right)^{1/4}\\
  & \lesssim \| \nabla \vecb{u} \|^{1/2}_{L^2(\Omega)} \left( \sum_{F \in \mathcal{F}(\Omega)} h_F^{d+2} \varrho_{KL}^2 \right)^{1/4}
\end{align*}
To show that the last sum converges to zero, we use that \(\varrho_{KL}^2\) is smaller than \(\varrho_h|_{T_F}^2\) for some
neighboring simplex \(T_F\) of \(T\) and hence
\begin{align*}
 \left( \sum_{F \in \mathcal{F}(\Omega)} h_F^{d+2} \varrho_{KL}^2 \right)^{1/4}
 \lesssim \left( \sum_{F \in \mathcal{F}(\Omega)} h_F^2 \lvert T_F \rvert \varrho_h|_{T_F}^2 \right)^{1/4}
 \leq h^{1/2} \| \varrho_h \|_{L^2}^{1/2}.
\end{align*}
According to \eqref{eq:disc:stab:est:ii} $\| \varrho_h \|_{L^2}$ is bounded and hence, one arrives at
  \begin{align*}
    A \lesssim h^{1/2}
  \end{align*}
  which concludes the proof of Step 2.
  
  \medskip
  \textbf{Step 3. $(p,\varrho)$ satisfy the equation of state, i.e.\ \(p = \varphi(\varrho) = c \varrho\).}
  
  Consider any function \(\varphi \in C^\infty_C(\Omega)\) and its piecewise-constant approximation \(\varphi_h := \pi_0 \varphi\) which converges strongly to \(\varphi\). Then its holds
  \begin{align*}
    \int_\Omega p_h \varphi_h \, dx & \rightarrow \int_\Omega p \varphi \, dx\\
    \int_\Omega c \varrho_h \varphi_h \, dx & \rightarrow \int_\Omega c \varrho \varphi \, dx.
  \end{align*}
  Since the integrals on the right-hand side are equal for any \(h\), also their limit integrals have to be equal, i.e.
  \begin{align*}
    \int_\Omega p \varphi \, dx = \int_\Omega c \varrho \varphi \, dx \quad \text{for all } \varphi \in C^\infty_C(\Omega)
  \end{align*}
  Hence, it follows \(p = c \varrho\).
\end{proof}

\section{Numerical Experiments}\label{sec:numerics}
This section reports on some two-dimensional
numerical experiments assessing
accuracy and asymptotic convergence rates of the novel scheme,
which especially 
illustrates the increased robustness with respect to gradients in the momentum balance. Some experiments also show that the scheme might
also converge in barotropic cases where \(p = \varphi(\varrho) := c \varrho^\gamma\) with \(\gamma > 1\). In this case however the proof of Step 3 in the convergence proof (without using additional stability terms in the scheme) is non-trivial and open.

The loop in the algorithm was stopped in all experiments until the tolerance criterion was satisfied with \(\text{tol} := 10^{-11}\). The time step in the evolution of the density
was chosen small enough and usually \(\tau \approx \nu/c\). The term 'ndof'
refers to the number of degrees of freedom and is approximately
\(2 \mathcal{T} + \mathcal{N} + \mathcal{E}\), i.e. the size of the ansatz
spaces for density, pressure and velocity.

Throughout this section, $(\vecb{u}^+_h,\varrho^+_h)$ denotes the solution of the 'modified scheme' \eqref{eq:gradient:robust:scheme} and $(\vecb{u}_h,\varrho_h)$ denotes the solution of the 'classical scheme' \eqref{eq:gradient:robust:scheme} where \(\Pi = 1\).

\subsection{Manufactured solutions to study error convergence and locking}
This example on the unit square \(\Omega := (0,1)^2\) studies the convergence rates of our discretization scheme and examines the exact solution
\begin{align*}
  \vecb{u} := \mathrm{curl}(x^2(x-1)^2y^2(y-1)^2)/\varrho, \qquad p = \varphi(\varrho) := c\varrho^\gamma
\end{align*}
for different choices of \(\gamma\), \(\mu\) and \(\lambda = -2\mu/3\). 
Assuming a linear density \(\varrho := 1 + (y - 1/2)/c\),
the first test case considers the isothermal configuration
\(\gamma = 1\) and afterwards a barotropic configuration with
\(\gamma = 1.4\) is presented.
In all cases \(\int_\Omega \varrho \, dx = 1\) holds independent of \(c\).
The right-hand side functions
are chosen such that \((\vecb{u},p,\rho)\) is a solution of the compressible Stokes system with
\begin{align*}
  \vecb{f} := - 2 \mu \epsilon(\vecb{u}) - \frac{\mu}{3} \nabla(\mathrm{div} \vecb{u}) + \nabla p, \qquad
  \vecb{g} := 0.
\end{align*}

\pgfplotstableset{
    row style/.style 2 args={
        every row #1 column 1/.style={#2},
        every row #1 column 2/.style={#2},
        every row #1 column 3/.style={#2},
        every row #1 column 4/.style={#2},
        every row #1 column 5/.style={#2},
        every row #1 column 6/.style={#2},
        every row #1 column 7/.style={#2},
        every row #1 column 8/.style={#2},
        every row #1 column 9/.style={#2},
        every row #1 column 10/.style={#2},
    }
}


\pgfplotstableread[col sep=ampersand,row sep=\\]{
ndof & L2uBR+ &	 H1uBR+ & L2rhoBR+ & L2uBR &	 H1uBR & L2rhoBR \\
     161 & 1.6406e-03    & 5.3074e-02    & 6.3260e-02 & 2.2163e-03    & 5.0876e-02    & 6.6181e-02\\
     617 & 5.1687e-04    & 2.7185e-02    & 2.9715e-02 & 6.8682e-04    & 2.6798e-02    & 3.0472e-02\\
    2297 & 1.7243e-04    & 1.3593e-02    & 1.4988e-02 & 2.1981e-04    & 1.3299e-02    & 1.5277e-02\\
    9152 & 6.8132e-05    & 7.2351e-03    & 7.6828e-03 & 7.9905e-05    & 6.9205e-03    & 7.7950e-03\\
   36326 & 3.1398e-05    & 3.8502e-03    & 3.8844e-03 & 3.4143e-05    & 3.6769e-03    & 3.9254e-03\\
  143945 & 1.5167e-05    & 2.0803e-03    & 1.9910e-03 & 1.5772e-05    & 1.9997e-03    & 2.0053e-03\\
  573386 & 7.3871e-06    & 1.1250e-03    & 1.0212e-03 & 7.5349e-06    & 1.0883e-03    & 1.0256e-03\\
 2290184 & 3.6480e-06    & 6.0305e-04    & 5.2514e-04 & 3.6842e-06    & 5.8469e-04    & 5.2668e-04\\
}\tableAgOne 

\pgfplotstableread[col sep=ampersand,row sep=\\]{
ndof & L2uBR+ &	 H1uBR+ & L2rhoBR+ & L2uBR &	 H1uBR & L2rhoBR \\
     161 & 1.4263e-03    & 4.2681e-02    & 6.2815e-04 & 1.9350e-03    & 4.2269e-02    & 6.5286e-04\\
     617 & 3.3655e-04    & 2.1185e-02    & 2.9560e-04 & 4.8204e-04    & 2.1516e-02    & 3.0179e-04\\
    2297 & 9.3756e-05    & 1.0842e-02    & 1.4850e-04 & 1.3713e-04    & 1.1034e-02    & 1.5111e-04\\
    9152 & 2.0188e-05    & 5.3259e-03    & 7.5477e-05 & 3.1425e-05    & 5.3609e-03    & 7.6773e-05\\
   36326 & 5.2149e-06    & 2.6664e-03    & 3.7654e-05 & 8.0579e-06    & 2.6846e-03    & 3.8197e-05\\
  143945 & 1.2726e-06    & 1.3288e-03    & 1.8908e-05 & 1.9892e-06    & 1.3413e-03    & 1.9156e-05\\
  573386 & 3.2005e-07    & 6.6387e-04    & 9.4417e-06 & 5.0723e-07    & 6.6994e-04    & 9.5619e-06\\
 2290184 & 8.5498e-08    & 3.3394e-04    & 4.7202e-06 & 1.3472e-07    & 3.3631e-04    & 4.7794e-06\\
}\tableBgOne 

\pgfplotstabletranspose[string type,
	create on use/Lrate+/.style={create col/dyadic refinement rate={L2uBR+}},
	create on use/Lrate/.style={create col/dyadic refinement rate={L2uBR}},
	create on use/Hrate+/.style={create col/dyadic refinement rate={H1uBR+}},
	create on use/Hrate/.style={create col/dyadic refinement rate={H1uBR}},
	create on use/Drate+/.style={create col/dyadic refinement rate={L2rhoBR+}},
	create on use/Drate/.style={create col/dyadic refinement rate={L2rhoBR}},
    columns={ndof,L2uBR+,Lrate+,L2uBR,Lrate,H1uBR+,Hrate+,H1uBR,Hrate,L2rhoBR+,Drate+,L2rhoBR,Drate},
    colnames from=ndof,
    input colnames to=ndof
]\tableAgOnetranspose{\tableAgOne}

\pgfplotstabletranspose[string type,
	create on use/Lrate+/.style={create col/dyadic refinement rate={L2uBR+}},
	create on use/Lrate/.style={create col/dyadic refinement rate={L2uBR}},
	create on use/Hrate+/.style={create col/dyadic refinement rate={H1uBR+}},
	create on use/Hrate/.style={create col/dyadic refinement rate={H1uBR}},
	create on use/Drate+/.style={create col/dyadic refinement rate={L2rhoBR+}},
	create on use/Drate/.style={create col/dyadic refinement rate={L2rhoBR}},
    columns={ndof,L2uBR+,Lrate+,L2uBR,Lrate,H1uBR+,Hrate+,H1uBR,Hrate,L2rhoBR+,Drate+,L2rhoBR,Drate},
    colnames from=ndof,
    input colnames to=ndof
]\tableBgOnetranspose{\tableBgOne}


\pgfplotstableread[col sep=ampersand,row sep=\\]{
ndof & L2uBR+ &	 H1uBR+ & L2rhoBR+ & L2uBR &	 H1uBR & L2rhoBR \\
     161 & 1.6492e-03    & 5.3035e-02    & 6.2509e-02 & 8.6276e-03    & 1.3289e-01    & 6.4614e-02\\
     617 & 5.2150e-04    & 2.7223e-02    & 2.9450e-02 & 2.4338e-03    & 7.7364e-02    & 3.0112e-02\\
    2297 & 1.7880e-04    & 1.4004e-02    & 1.4789e-02 & 6.8645e-04    & 4.0800e-02    & 1.5047e-02\\
    9152 & 7.2297e-05    & 7.8991e-03    & 7.5198e-03 & 1.9937e-04    & 2.1302e-02    & 7.6425e-03\\
   36326 & 3.3919e-05    & 5.1988e-03    & 3.7541e-03 & 6.3455e-05    & 1.1166e-02    & 3.8076e-03\\
  143945 & 1.6517e-05    & 3.9150e-03    & 1.8906e-03 & 2.3100e-05    & 6.3231e-03    & 1.9148e-03\\
  573386 & 8.0075e-06    & 3.0203e-03    & 9.5060e-04 & 9.5158e-06    & 3.8831e-03    & 9.6114e-04\\
 2290184 & 3.8916e-06    & 2.1969e-03    & 4.8150e-04 & 4.2498e-06    & 2.5115e-03    & 4.8588e-04\\
}\tableCgOne 

\pgfplotstableread[col sep=ampersand,row sep=\\]{
ndof & L2uBR+ &	 H1uBR+ & L2rhoBR+ & L2uBR &	 H1uBR & L2rhoBR \\
     161 & 1.4264e-03    & 4.2681e-02    & 6.2503e-04 & 8.3545e-03    & 1.3101e-01    & 6.4507e-04\\
     617 & 3.3656e-04    & 2.1185e-02    & 2.9448e-04 & 2.2750e-03    & 7.6136e-02    & 3.0097e-04\\
    2297 & 9.3757e-05    & 1.0842e-02    & 1.4786e-04 & 6.1015e-04    & 4.0233e-02    & 1.5041e-04\\
    9152 & 2.0188e-05    & 5.3256e-03    & 7.5161e-05 & 1.5620e-04    & 2.0806e-02    & 7.6403e-05\\
   36326 & 5.2150e-06    & 2.6661e-03    & 3.7485e-05 & 3.9872e-05    & 1.0442e-02    & 3.8045e-05\\
  143945 & 1.2726e-06    & 1.3285e-03    & 1.8826e-05 & 1.0069e-05    & 5.3120e-03    & 1.9094e-05\\
  573386 & 3.2009e-07    & 6.6366e-04    & 9.4010e-06 & 2.5413e-06    & 2.6500e-03    & 9.5324e-06\\
 2290184 & 8.5601e-08    & 3.3391e-04    & 4.6988e-06 & 6.4652e-07    & 1.3303e-03    & 4.7639e-06\\
}\tableDgOne 

\pgfplotstabletranspose[string type,
	create on use/Lrate+/.style={create col/dyadic refinement rate={L2uBR+}},
	create on use/Lrate/.style={create col/dyadic refinement rate={L2uBR}},
	create on use/Hrate+/.style={create col/dyadic refinement rate={H1uBR+}},
	create on use/Hrate/.style={create col/dyadic refinement rate={H1uBR}},
	create on use/Drate+/.style={create col/dyadic refinement rate={L2rhoBR+}},
	create on use/Drate/.style={create col/dyadic refinement rate={L2rhoBR}},
    columns={ndof,L2uBR+,Lrate+,L2uBR,Lrate,H1uBR+,Hrate+,H1uBR,Hrate,L2rhoBR+,Drate+,L2rhoBR,Drate},
    colnames from=ndof,
    input colnames to=ndof
]\tableCgOnetranspose{\tableCgOne}

\pgfplotstabletranspose[string type,
	create on use/Lrate+/.style={create col/dyadic refinement rate={L2uBR+}},
	create on use/Lrate/.style={create col/dyadic refinement rate={L2uBR}},
	create on use/Hrate+/.style={create col/dyadic refinement rate={H1uBR+}},
	create on use/Hrate/.style={create col/dyadic refinement rate={H1uBR}},
	create on use/Drate+/.style={create col/dyadic refinement rate={L2rhoBR+}},
	create on use/Drate/.style={create col/dyadic refinement rate={L2rhoBR}},
    columns={ndof,L2uBR+,Lrate+,L2uBR,Lrate,H1uBR+,Hrate+,H1uBR,Hrate,L2rhoBR+,Drate+,L2rhoBR,Drate},
    colnames from=ndof,
    input colnames to=ndof
]\tableDgOnetranspose{\tableDgOne}


\pgfplotstableread[col sep=ampersand,row sep=\\]{
ndof & L2uBR+ &	 H1uBR+ & L2rhoBR+ & L2uBR &	 H1uBR & L2rhoBR \\
     161 & 1.6503e-03    & 5.3032e-02    & 6.2500e-02 & 8.4443e-02    & 1.2485e+00    & 6.4516e-02\\
     617 & 5.2211e-04    & 2.7232e-02    & 2.9446e-02 & 2.2952e-02    & 7.3069e-01    & 3.0104e-02\\
    2297 & 1.7975e-04    & 1.4102e-02    & 1.4785e-02 & 6.0453e-03    & 3.8740e-01    & 1.5041e-02\\
    9152 & 7.3108e-05    & 8.0909e-03    & 7.5158e-03 & 1.5683e-03    & 2.0237e-01    & 7.6391e-03\\
   36326 & 3.4655e-05    & 5.7504e-03    & 3.7484e-03 & 4.0991e-04    & 1.0153e-01    & 3.8041e-03\\
  143945 & 1.7128e-05    & 4.9715e-03    & 1.8827e-03 & 1.0825e-04    & 5.1790e-02    & 1.9093e-03\\
  573386 & 8.4708e-06    & 4.6672e-03    & 9.4032e-04 & 2.9841e-05    & 2.6074e-02    & 9.5323e-04\\
 2290184 & 4.1896e-06    & 4.3387e-03    & 4.7030e-04 & 9.0358e-06    & 1.3544e-02    & 4.7657e-04\\
}\tableEgOne 

\pgfplotstableread[col sep=ampersand,row sep=\\]{
ndof & L2uBR+ &	 H1uBR+ & L2rhoBR+ & L2uBR &	 H1uBR & L2rhoBR \\
     161 & 1.4264e-03    & 4.2681e-02    & 6.2500e-04 & 8.2849e-02    & 1.2507e+00    & 6.4458e-04\\
     617 & 3.3656e-04    & 2.1185e-02    & 2.9446e-04 & 2.2552e-02    & 7.3145e-01    & 3.0099e-04\\
    2297 & 9.3757e-05    & 1.0842e-02    & 1.4785e-04 & 5.9433e-03    & 3.8757e-01    & 1.5040e-04\\
    9152 & 2.0188e-05    & 5.3256e-03    & 7.5158e-05 & 1.5293e-03    & 2.0253e-01    & 7.6391e-05\\
   36326 & 5.2150e-06    & 2.6661e-03    & 3.7484e-05 & 3.9081e-04    & 1.0155e-01    & 3.8043e-05\\
  143945 & 1.2726e-06    & 1.3285e-03    & 1.8826e-05 & 9.8718e-05    & 5.1688e-02    & 1.9093e-05\\
  573386 & 3.2009e-07    & 6.6364e-04    & 9.4006e-06 & 2.4747e-05    & 2.5770e-02    & 9.5324e-06\\
 2290184 & 8.5614e-08    & 3.3392e-04    & 4.6986e-06 & 6.2117e-06    & 1.2941e-02    & 4.7639e-06\\
}\tableFgOne 

\pgfplotstabletranspose[string type,
	create on use/Lrate+/.style={create col/dyadic refinement rate={L2uBR+}},
	create on use/Lrate/.style={create col/dyadic refinement rate={L2uBR}},
	create on use/Hrate+/.style={create col/dyadic refinement rate={H1uBR+}},
	create on use/Hrate/.style={create col/dyadic refinement rate={H1uBR}},
	create on use/Drate+/.style={create col/dyadic refinement rate={L2rhoBR+}},
	create on use/Drate/.style={create col/dyadic refinement rate={L2rhoBR}},
    columns={ndof,L2uBR+,Lrate+,L2uBR,Lrate,H1uBR+,Hrate+,H1uBR,Hrate,L2rhoBR+,Drate+,L2rhoBR,Drate},
    colnames from=ndof,
    input colnames to=ndof
]\tableEgOnetranspose{\tableEgOne}

\pgfplotstabletranspose[string type,
	create on use/Lrate+/.style={create col/dyadic refinement rate={L2uBR+}},
	create on use/Lrate/.style={create col/dyadic refinement rate={L2uBR}},
	create on use/Hrate+/.style={create col/dyadic refinement rate={H1uBR+}},
	create on use/Hrate/.style={create col/dyadic refinement rate={H1uBR}},
	create on use/Drate+/.style={create col/dyadic refinement rate={L2rhoBR+}},
	create on use/Drate/.style={create col/dyadic refinement rate={L2rhoBR}},
    columns={ndof,L2uBR+,Lrate+,L2uBR,Lrate,H1uBR+,Hrate+,H1uBR,Hrate,L2rhoBR+,Drate+,L2rhoBR,Drate},
    colnames from=ndof,
    input colnames to=ndof
]\tableFgOnetranspose{\tableFgOne}


\pgfplotstableread[col sep=ampersand,row sep=\\]{
ndof & L2uBR+ &	 H1uBR+ & L2rhoBR+ & L2uBR &	 H1uBR & L2rhoBR \\
     161 & 1.6504e-03    & 5.3032e-02    & 6.2500e-02 & 8.4598e+00    & 1.2494e+02    & 6.4510e-02\\
     617 & 5.2217e-04    & 2.7233e-02    & 2.9446e-02 & 2.2941e+00    & 7.3032e+01    & 3.0105e-02\\
    2297 & 1.7986e-04    & 1.4114e-02    & 1.4785e-02 & 6.0013e-01    & 3.8723e+01    & 1.5041e-02\\
    9152 & 7.3207e-05    & 8.1159e-03    & 7.5158e-03 & 1.5365e-01    & 2.0252e+01    & 7.6391e-03\\
   36326 & 3.4755e-05    & 5.8302e-03    & 3.7484e-03 & 3.9263e-02    & 1.0153e+01    & 3.8043e-03\\
  143945 & 1.7225e-05    & 5.1484e-03    & 1.8826e-03 & 9.9078e-03    & 5.1674e+00    & 1.9093e-03\\
  573386 & 8.5658e-06    & 5.0214e-03    & 9.4006e-04 & 2.4836e-03    & 2.5762e+00    & 9.5324e-04\\
 2290184 & 4.2762e-06    & 4.9878e-03    & 4.6986e-04 & 6.2346e-04    & 1.2937e+00    & 4.7639e-04\\
}\tableGgOne 

\pgfplotstableread[col sep=ampersand,row sep=\\]{
ndof & L2uBR+ &	 H1uBR+ & L2rhoBR+ & L2uBR &	 H1uBR & L2rhoBR \\
     161 & 1.4264e-03    & 4.2681e-02    & 6.2500e-04 & 8.2969e+00    & 1.2505e+02    & 6.4453e-04\\
     617 & 3.3656e-04    & 2.1185e-02    & 2.9446e-04 & 2.2572e+00    & 7.3083e+01    & 3.0099e-04\\
    2297 & 9.3757e-05    & 1.0842e-02    & 1.4785e-04 & 5.9381e-01    & 3.8725e+01    & 1.5040e-04\\
    9152 & 2.0188e-05    & 5.3256e-03    & 7.5158e-05 & 1.5281e-01    & 2.0252e+01    & 7.6390e-05\\
   36326 & 5.2150e-06    & 2.6661e-03    & 3.7484e-05 & 3.9054e-02    & 1.0153e+01    & 3.8043e-05\\
  143945 & 1.2726e-06    & 1.3285e-03    & 1.8826e-05 & 9.8643e-03    & 5.1675e+00    & 1.9093e-05\\
  573386 & 3.2009e-07    & 6.6364e-04    & 9.4006e-06 & 2.4708e-03    & 2.5762e+00    & 9.5324e-06\\
 2290184 & 8.5605e-08    & 3.3392e-04    & 4.6986e-06 & 6.1928e-04    & 1.2938e+00    & 4.7640e-06\\
}\tableHgOne 

\pgfplotstabletranspose[string type,
	create on use/Lrate+/.style={create col/dyadic refinement rate={L2uBR+}},
	create on use/Lrate/.style={create col/dyadic refinement rate={L2uBR}},
	create on use/Hrate+/.style={create col/dyadic refinement rate={H1uBR+}},
	create on use/Hrate/.style={create col/dyadic refinement rate={H1uBR}},
	create on use/Drate+/.style={create col/dyadic refinement rate={L2rhoBR+}},
	create on use/Drate/.style={create col/dyadic refinement rate={L2rhoBR}},
    columns={ndof,L2uBR+,Lrate+,L2uBR,Lrate,H1uBR+,Hrate+,H1uBR,Hrate,L2rhoBR+,Drate+,L2rhoBR,Drate},
    colnames from=ndof,
    input colnames to=ndof
]\tableGgOnetranspose{\tableGgOne}

\pgfplotstabletranspose[string type,
	create on use/Lrate+/.style={create col/dyadic refinement rate={L2uBR+}},
	create on use/Lrate/.style={create col/dyadic refinement rate={L2uBR}},
	create on use/Hrate+/.style={create col/dyadic refinement rate={H1uBR+}},
	create on use/Hrate/.style={create col/dyadic refinement rate={H1uBR}},
	create on use/Drate+/.style={create col/dyadic refinement rate={L2rhoBR+}},
	create on use/Drate/.style={create col/dyadic refinement rate={L2rhoBR}},
    columns={ndof,L2uBR+,Lrate+,L2uBR,Lrate,H1uBR+,Hrate+,H1uBR,Hrate,L2rhoBR+,Drate+,L2rhoBR,Drate},
    colnames from=ndof,
    input colnames to=ndof
]\tableHgOnetranspose{\tableHgOne}


\begin{figure}
\centering
\begin{tikzpicture}
\begin{axis}[
         ymin=1e-8,
         ymax=1e-1,
         xmin=1e2,
         xmax=1e7,
        xmode=log,
        ymode=log,
        xlabel near ticks,
        ylabel near ticks,
        xlabel=ndof,
        width=0.405\textwidth,
        height=175pt,
       title=,
       legend style={at={(1.1,0.0)},anchor=south west,legend cell align=left}]
\addplot[color=blue,mark=o] table[x index=0, y index=1]{\tableAgOne};    
\addlegendentry{\tiny $\| \vecb{u} - \vecb{u}^+_h \|_{L^2}$}
\addplot[color=red,mark=o] table[x index=0, y index=4]{\tableAgOne};   
\addlegendentry{\tiny $\| \vecb{u} - \vecb{u}_h \|_{L^2}$}

\addplot[color=blue,mark=*] table[x index=0, y index=2]{\tableAgOne};    
\addlegendentry{\tiny $\| \nabla(\vecb{u} - \vecb{u}^+_h) \|_{L^2}$}
\addplot[color=red,mark=*] table[x index=0, y index=5]{\tableAgOne};     
\addlegendentry{\tiny $\| \nabla(\vecb{u} - \vecb{u}_h) \|_{L^2}$}

\addplot[color=blue,mark=x] table[x index=0, y index=3]{\tableAgOne};    
\addlegendentry{\tiny $\| \varrho - \varrho^+_h \|_{L^2}$}
\addplot[color=red,mark=x] table[x index=0, y index=6]{\tableAgOne};     
\addlegendentry{\tiny $\| \varrho - \varrho_h \|_{L^2}$}

\addplot[domain=1e0:1e7, color=gray, dashed]{0.4/pow(x,0.5)};
\addlegendentry{\tiny $h$}
\addplot[domain=1e0:1e7, color=gray,dotted]{1e-1/pow(x,1)};
\addlegendentry{\tiny $h^2$}
\end{axis}
\end{tikzpicture}
\hfill
\begin{tikzpicture}
\begin{axis}[
         ymin=1e-8,
         ymax=1e-1,
         xmin=1e2,
         xmax=1e7,
        xmode=log,
        ymode=log,
        xlabel near ticks,
        ylabel near ticks,
        xlabel=ndof,
        width=0.405\textwidth,
        height=175pt,
       title=,]
\addplot[color=blue,mark=o] table[x index=0, y index=1]{\tableBgOne}; 
\addplot[color=red,mark=o] table[x index=0, y index=4]{\tableBgOne};  
  
\addplot[color=blue,mark=*] table[x index=0, y index=2]{\tableBgOne}; 
\addplot[color=red,mark=*] table[x index=0, y index=5]{\tableBgOne}; 
   
\addplot[color=blue,mark=x] table[x index=0, y index=3]{\tableBgOne};  
\addplot[color=red,mark=x] table[x index=0, y index=6]{\tableBgOne}; 

\addplot[domain=1e0:1e7, color=gray, dashed]{0.4/pow(x,0.5)};
\addplot[domain=1e0:1e7, color=gray,dotted]{1e-1/pow(x,1)};
\end{axis}
\end{tikzpicture}

\caption{\label{fig:resultsA_mu1}
Convergence histories for the modified method and classical method for $\gamma=1$ and $\mu = 1$ and $c=1$ (left), $c=100$ (right).}
\end{figure}

\begin{figure}
\centering
\begin{tikzpicture}
\begin{axis}[
         ymin=1e-8,
         ymax=1e0,
         xmin=1e2,
         xmax=1e7,
        xmode=log,
        ymode=log,
        xlabel near ticks,
        ylabel near ticks,
        xlabel=ndof,
        width=0.405\textwidth,
        height=175pt,
       title=,
       legend style={at={(1.1,0.0)},anchor=south west,legend cell align=left}]
\addplot[color=blue,mark=o] table[x index=0, y index=1]{\tableCgOne};    
\addlegendentry{\tiny $\| \vecb{u} - \vecb{u}^+_h \|_{L^2}$}
\addplot[color=red,mark=o] table[x index=0, y index=4]{\tableCgOne};   
\addlegendentry{\tiny $\| \vecb{u} - \vecb{u}_h \|_{L^2}$}

\addplot[color=blue,mark=*] table[x index=0, y index=2]{\tableCgOne};    
\addlegendentry{\tiny $\| \nabla(\vecb{u} - \vecb{u}^+_h) \|_{L^2}$}
\addplot[color=red,mark=*] table[x index=0, y index=5]{\tableCgOne};     
\addlegendentry{\tiny $\| \nabla(\vecb{u} - \vecb{u}_h) \|_{L^2}$}

\addplot[color=blue,mark=x] table[x index=0, y index=3]{\tableCgOne};    
\addlegendentry{\tiny $\| \varrho - \varrho^+_h \|_{L^2}$}
\addplot[color=red,mark=x] table[x index=0, y index=6]{\tableCgOne};     
\addlegendentry{\tiny $\| \varrho - \varrho_h \|_{L^2}$}

\addplot[domain=1e0:1e7, color=gray, dashed]{0.4/pow(x,0.5)};
\addlegendentry{\tiny $h$}
\addplot[domain=1e0:1e7, color=gray,dotted]{1e-1/pow(x,1)};
\addlegendentry{\tiny $h^2$}
\end{axis}
\end{tikzpicture}
\hfill
\begin{tikzpicture}
\begin{axis}[
         ymin=1e-8,
         ymax=1e0,
         xmin=1e2,
         xmax=1e7,
        xmode=log,
        ymode=log,
        xlabel near ticks,
        ylabel near ticks,
        xlabel=ndof,
        width=0.405\textwidth,
        height=175pt,
       title=,]
\addplot[color=blue,mark=o] table[x index=0, y index=1]{\tableDgOne}; 
\addplot[color=red,mark=o] table[x index=0, y index=4]{\tableDgOne};  
  
\addplot[color=blue,mark=*] table[x index=0, y index=2]{\tableDgOne}; 
\addplot[color=red,mark=*] table[x index=0, y index=5]{\tableDgOne}; 
   
\addplot[color=blue,mark=x] table[x index=0, y index=3]{\tableDgOne};  
\addplot[color=red,mark=x] table[x index=0, y index=6]{\tableDgOne}; 

\addplot[domain=1e0:1e7, color=gray, dashed]{0.4/pow(x,0.5)};
\addplot[domain=1e0:1e7, color=gray,dotted]{1e-1/pow(x,1)};
\end{axis}
\end{tikzpicture}

\caption{\label{fig:resultsA_mu10}
Convergence histories for the modified method and classical method for $\gamma=1$ and $\mu = 10^{-1}$ and $c=1$ (left), $c=100$ (right).}
\end{figure}

\begin{figure}
\centering
\begin{tikzpicture}
\begin{axis}[
         ymin=1e-8,
         ymax=1e1,
         xmin=1e2,
         xmax=1e7,
        xmode=log,
        ymode=log,
        xlabel near ticks,
        ylabel near ticks,
        xlabel=ndof,
        width=0.405\textwidth,
        height=175pt,
       title=,
       legend style={at={(1.1,0.0)},anchor=south west,legend cell align=left}]
\addplot[color=blue,mark=o] table[x index=0, y index=1]{\tableEgOne};    
\addlegendentry{\tiny $\| \vecb{u} - \vecb{u}^+_h \|_{L^2}$}
\addplot[color=red,mark=o] table[x index=0, y index=4]{\tableEgOne};   
\addlegendentry{\tiny $\| \vecb{u} - \vecb{u}_h \|_{L^2}$}

\addplot[color=blue,mark=*] table[x index=0, y index=2]{\tableEgOne};    
\addlegendentry{\tiny $\| \nabla(\vecb{u} - \vecb{u}^+_h) \|_{L^2}$}
\addplot[color=red,mark=*] table[x index=0, y index=5]{\tableEgOne};     
\addlegendentry{\tiny $\| \nabla(\vecb{u} - \vecb{u}_h) \|_{L^2}$}

\addplot[color=blue,mark=x] table[x index=0, y index=3]{\tableEgOne};    
\addlegendentry{\tiny $\| \varrho - \varrho^+_h \|_{L^2}$}
\addplot[color=red,mark=x] table[x index=0, y index=6]{\tableEgOne};     
\addlegendentry{\tiny $\| \varrho - \varrho_h \|_{L^2}$}

\addplot[domain=1e0:1e7, color=gray, dashed]{0.4/pow(x,0.5)};
\addlegendentry{\tiny $h$}
\addplot[domain=1e0:1e7, color=gray,dotted]{1e-1/pow(x,1)};
\addlegendentry{\tiny $h^2$}
\end{axis}
\end{tikzpicture}
\hfill
\begin{tikzpicture}
\begin{axis}[
         ymin=1e-8,
         ymax=1e1,
         xmin=1e2,
         xmax=1e7,
        xmode=log,
        ymode=log,
        xlabel near ticks,
        ylabel near ticks,
        xlabel=ndof,
        width=0.405\textwidth,
        height=175pt,
       title=,]
\addplot[color=blue,mark=o] table[x index=0, y index=1]{\tableFgOne}; 
\addplot[color=red,mark=o] table[x index=0, y index=4]{\tableFgOne};  
  
\addplot[color=blue,mark=*] table[x index=0, y index=2]{\tableFgOne}; 
\addplot[color=red,mark=*] table[x index=0, y index=5]{\tableFgOne}; 
   
\addplot[color=blue,mark=x] table[x index=0, y index=3]{\tableFgOne};  
\addplot[color=red,mark=x] table[x index=0, y index=6]{\tableFgOne}; 

\addplot[domain=1e0:1e7, color=gray, dashed]{0.4/pow(x,0.5)};
\addplot[domain=1e0:1e7, color=gray,dotted]{1e-1/pow(x,1)};
\end{axis}
\end{tikzpicture}

\caption{\label{fig:resultsA_mu100}
Convergence histories for the modified method and classical method for $\gamma=1$ and $\mu = 10^{-2}$ and $c=1$ (left), $c=100$ (right).}
\end{figure}

\begin{figure}
\centering
\begin{tikzpicture}
\begin{axis}[
         ymin=1e-8,
         ymax=1e3,
         xmin=1e2,
         xmax=1e7,
        xmode=log,
        ymode=log,
        xlabel near ticks,
        ylabel near ticks,
        xlabel=ndof,
        width=0.405\textwidth,
        height=175pt,
       title=,
       legend style={at={(1.1,0.0)},anchor=south west,legend cell align=left}]
\addplot[color=blue,mark=o] table[x index=0, y index=1]{\tableGgOne};    
\addlegendentry{\tiny $\| \vecb{u} - \vecb{u}^+_h \|_{L^2}$}
\addplot[color=red,mark=o] table[x index=0, y index=4]{\tableGgOne};   
\addlegendentry{\tiny $\| \vecb{u} - \vecb{u}_h \|_{L^2}$}

\addplot[color=blue,mark=*] table[x index=0, y index=2]{\tableGgOne};    
\addlegendentry{\tiny $\| \nabla(\vecb{u} - \vecb{u}^+_h) \|_{L^2}$}
\addplot[color=red,mark=*] table[x index=0, y index=5]{\tableGgOne};     
\addlegendentry{\tiny $\| \nabla(\vecb{u} - \vecb{u}_h) \|_{L^2}$}

\addplot[color=blue,mark=x] table[x index=0, y index=3]{\tableGgOne};    
\addlegendentry{\tiny $\| \varrho - \varrho^+_h \|_{L^2}$}
\addplot[color=red,mark=x] table[x index=0, y index=6]{\tableGgOne};     
\addlegendentry{\tiny $\| \varrho - \varrho_h \|_{L^2}$}

\addplot[domain=1e0:1e7, color=gray, dashed]{0.4/pow(x,0.5)};
\addlegendentry{\tiny $h$}
\addplot[domain=1e0:1e7, color=gray,dotted]{1e-1/pow(x,1)};
\addlegendentry{\tiny $h^2$}
\end{axis}
\end{tikzpicture}
\hfill
\begin{tikzpicture}
\begin{axis}[
         ymin=1e-8,
         ymax=1e3,
         xmin=1e2,
         xmax=1e7,
        xmode=log,
        ymode=log,
        xlabel near ticks,
        ylabel near ticks,
        xlabel=ndof,
        width=0.405\textwidth,
        height=175pt,
       title=,]
\addplot[color=blue,mark=o] table[x index=0, y index=1]{\tableHgOne}; 
\addplot[color=red,mark=o] table[x index=0, y index=4]{\tableHgOne};  
  
\addplot[color=blue,mark=*] table[x index=0, y index=2]{\tableHgOne}; 
\addplot[color=red,mark=*] table[x index=0, y index=5]{\tableHgOne}; 
   
\addplot[color=blue,mark=x] table[x index=0, y index=3]{\tableHgOne};  
\addplot[color=red,mark=x] table[x index=0, y index=6]{\tableHgOne}; 

\addplot[domain=1e0:1e7, color=gray, dashed]{0.4/pow(x,0.5)};
\addplot[domain=1e0:1e7, color=gray,dotted]{1e-1/pow(x,1)};
\end{axis}
\end{tikzpicture}

\caption{\label{fig:resultsA_mu10000}
Convergence histories for the modified method and classical method for $\gamma=1$ and $\mu = 10^{-4}$ and $c=1$ (left), $c=100$ (right).}
\end{figure}


\begin{sidewaystable}
\centering
$\gamma=1 \qquad \mu=1 \qquad c=1$ \\
\pgfplotstabletypeset[columns={ndof,161,617,2297,9152,36326,143945,573386,2290184},
precision=2,sci,sci 10e,sci zerofill,
  columns/ndof/.style={string type,
  string replace={Drate+}{rate},
  string replace={Hrate+}{rate},
  string replace={Lrate+}{rate},
  string replace={Drate}{rate},
  string replace={Hrate}{rate},
  string replace={Lrate}{rate},
  string replace={L2uBR}{$\| \vecb{u} - \vecb{u}_h\|_{L^2} $},
  string replace={L2uBR+}{$\| \vecb{u} - \vecb{u}^+_h\|_{L^2} $},
  string replace={H1uBR}{$\| \nabla(\vecb{u} - \vecb{u}_h)\|_{L^2} $},
  string replace={H1uBR+}{$\| \nabla(\vecb{u} - \vecb{u}^+_h)\|_{L^2} $},
  string replace={L2rhoBR}{$\| \varrho - \varrho_h\|_{L^2} $},
  string replace={L2rhoBR+}{$\| \varrho - \varrho^+_h\|_{L^2} $},
  },
row style={1}{fixed,precision=2,zerofill},
row style={3}{fixed,precision=2,zerofill},
row style={5}{fixed,precision=2,zerofill},
row style={7}{fixed,precision=2,zerofill},
row style={9}{fixed,precision=2,zerofill},
row style={11}{fixed,precision=2,zerofill},
every head row/.style={before row={\toprule}},
every row no 0/.style={before row=\midrule},
every row no 4/.style={before row=\midrule},
every row no 8/.style={before row=\midrule},
every last row/.style={after row={\bottomrule}},
]\tableAgOnetranspose
\vspace{2ex}

$\gamma=1 \qquad \mu=1 \qquad c=100$ \\
\pgfplotstabletypeset[columns={ndof,161,617,2297,9152,36326,143945,573386,2290184},
precision=2,sci,sci 10e,sci zerofill,
  columns/ndof/.style={string type,
  string replace={Drate+}{rate},
  string replace={Hrate+}{rate},
  string replace={Lrate+}{rate},
  string replace={Drate}{rate},
  string replace={Hrate}{rate},
  string replace={Lrate}{rate},
  string replace={L2uBR}{$\| \vecb{u} - \vecb{u}_h\|_{L^2} $},
  string replace={L2uBR+}{$\| \vecb{u} - \vecb{u}^+_h\|_{L^2} $},
  string replace={H1uBR}{$\| \nabla(\vecb{u} - \vecb{u}_h)\|_{L^2} $},
  string replace={H1uBR+}{$\| \nabla(\vecb{u} - \vecb{u}^+_h)\|_{L^2} $},
  string replace={L2rhoBR}{$\| \varrho - \varrho_h\|_{L^2} $},
  string replace={L2rhoBR+}{$\| \varrho - \varrho^+_h\|_{L^2} $},
  },
row style={1}{fixed,precision=2,zerofill},
row style={3}{fixed,precision=2,zerofill},
row style={5}{fixed,precision=2,zerofill},
row style={7}{fixed,precision=2,zerofill},
row style={9}{fixed,precision=2,zerofill},
row style={11}{fixed,precision=2,zerofill},
every head row/.style={before row={\toprule}},
every row no 0/.style={before row=\midrule},
every row no 4/.style={before row=\midrule},
every row no 8/.style={before row=\midrule},
every last row/.style={after row={\bottomrule}},
]\tableBgOnetranspose
\caption{\label{tab:resultsA_mu1}Errors of the modified gradient-robust scheme $(\vecb{u}_h^+,\rho^+)$ and the classical scheme $(\vecb{u}_h,\rho)$ for \(\gamma=1\) and \(\mu=1\).}
\end{sidewaystable}

\begin{sidewaystable}
\centering
$\gamma=1 \qquad \mu=10^{-2} \qquad c=1$ \\
\pgfplotstabletypeset[columns={ndof,161,617,2297,9152,36326,143945,573386,2290184},
precision=2,sci,sci 10e,sci zerofill,
  columns/ndof/.style={string type,
  string replace={Drate+}{rate},
  string replace={Hrate+}{rate},
  string replace={Lrate+}{rate},
  string replace={Drate}{rate},
  string replace={Hrate}{rate},
  string replace={Lrate}{rate},
  string replace={L2uBR}{$\| \vecb{u} - \vecb{u}_h\|_{L^2} $},
  string replace={L2uBR+}{$\| \vecb{u} - \vecb{u}^+_h\|_{L^2} $},
  string replace={H1uBR}{$\| \nabla(\vecb{u} - \vecb{u}_h)\|_{L^2} $},
  string replace={H1uBR+}{$\| \nabla(\vecb{u} - \vecb{u}^+_h)\|_{L^2} $},
  string replace={L2rhoBR}{$\| \varrho - \varrho_h\|_{L^2} $},
  string replace={L2rhoBR+}{$\| \varrho - \varrho^+_h\|_{L^2} $},
  },
row style={1}{fixed,precision=2,zerofill},
row style={3}{fixed,precision=2,zerofill},
row style={5}{fixed,precision=2,zerofill},
row style={7}{fixed,precision=2,zerofill},
row style={9}{fixed,precision=2,zerofill},
row style={11}{fixed,precision=2,zerofill},
every head row/.style={before row={\toprule}},
every row no 0/.style={before row=\midrule},
every row no 4/.style={before row=\midrule},
every row no 8/.style={before row=\midrule},
every last row/.style={after row={\bottomrule}},
]\tableEgOnetranspose
\vspace{2ex}

$\gamma=1 \qquad \mu=10^{-2} \qquad c=100$ \\
\pgfplotstabletypeset[columns={ndof,161,617,2297,9152,36326,143945,573386,2290184},
precision=2,sci,sci 10e,sci zerofill,
  columns/ndof/.style={string type,
  string replace={Drate+}{rate},
  string replace={Hrate+}{rate},
  string replace={Lrate+}{rate},
  string replace={Drate}{rate},
  string replace={Hrate}{rate},
  string replace={Lrate}{rate},
  string replace={L2uBR}{$\| \vecb{u} - \vecb{u}_h\|_{L^2} $},
  string replace={L2uBR+}{$\| \vecb{u} - \vecb{u}^+_h\|_{L^2} $},
  string replace={H1uBR}{$\| \nabla(\vecb{u} - \vecb{u}_h)\|_{L^2} $},
  string replace={H1uBR+}{$\| \nabla(\vecb{u} - \vecb{u}^+_h)\|_{L^2} $},
  string replace={L2rhoBR}{$\| \varrho - \varrho_h\|_{L^2} $},
  string replace={L2rhoBR+}{$\| \varrho - \varrho^+_h\|_{L^2} $},
  },
row style={1}{fixed,precision=2,zerofill},
row style={3}{fixed,precision=2,zerofill},
row style={5}{fixed,precision=2,zerofill},
row style={7}{fixed,precision=2,zerofill},
row style={9}{fixed,precision=2,zerofill},
row style={11}{fixed,precision=2,zerofill},
every head row/.style={before row={\toprule}},
every row no 0/.style={before row=\midrule},
every row no 4/.style={before row=\midrule},
every row no 8/.style={before row=\midrule},
every last row/.style={after row={\bottomrule}},
]\tableFgOnetranspose
\caption{\label{tab:resultsA_mu100}Errors of the modified gradient-robust scheme $(\vecb{u}_h^+,\rho^+)$ and the classical scheme $(\vecb{u}_h,\rho)$ for \(\gamma=1\) and \(\mu=10^{-2}\).}
\end{sidewaystable}

The experiments want to answer the question, whether the
same locking behavior from the incompressible Stokes problem
can be observed in the compressible setting for the unmodified scheme.

Table~\ref{tab:resultsA_mu1} displays the calculated errors for \(\gamma=1\), \(\mu=1\)
and \(c=1\) or \(c=100\). Figure~\ref{fig:resultsA_mu1} shows the
corresponding convergence histories and convergence rates. In the compressible case (\(c=1\)), the classical and
the modified method both give very similar results. Interestingly, the convergence rate of the \(L^2\) velocity error drops asymptotically to the suboptimal rate \(1\) with respect to the mesh size \(h \approx \text{ndof}^{-1/2}\).
In the nearly incompressible case ($c=100$) however, all convergence rates are optimal as one would expect from the linear incompressible Stokes problem.
A similar trend can be observed for $\mu=10^{-1}$ in Figure~\ref{fig:resultsA_mu10}. Here, also the convergence rate of $\|\nabla(\vecb{u} - \vecb{u}^+_h)\|_{L^2}$ is asymptotically significantly suboptimal (but above $0.33$). The unmodified method
begins to show a similar behavior a bit later, possibly due to the pressure-dependent consistency error that dominates in the beginning but is reduced with optimal order.

To study the locking behavior, Figures~\ref{fig:resultsA_mu100} and \ref{fig:resultsA_mu10000} show the convergence histories of the calculated errors for \(\mu=10^{-2}\) and \(\mu=10^{-4}\), respectively, with \(c \in \lbrace 1, 100 \rbrace\).
The results for \(\mu=10^{-2}\) are also printed in Table~\ref{tab:resultsA_mu100}.
The first important observation is that the classical scheme $(\vecb{u}_h,\varrho_h)$ indeed shows locking and produces errors
that are several magnitudes larger than the errors of the modified scheme $(\vecb{u}^+_h,\varrho^+_h)$. The factor on coarse meshes is approximately \(1/\mu\) as expected by the theory. However, for the case \(c=1\) on finer meshes the velocity error convergence rates of the modified scheme deteriorates earlier than the ones of the classical scheme. Nevertheless, the error of the modified scheme on the finest mesh is still much smaller than the error of the classical scheme and it is expected that the classical scheme
 also shows suboptimal convergence once it arrives at the same error level similar to the case \(\mu=10^{-1}\).
Note, that for the nearly incompressible case \(c=100\), all convergence rates are again optimal and the
gap between $(\vecb{u}_h,\varrho_h)$ and $(\vecb{u}^+_h,\varrho^+_h)$ due to the locking is as large as in the other case and, more importantly, persists
even on the finest mesh. This is the known locking behavior from the incompressible Stokes setting.

\pgfplotstableset{
    row style/.style 2 args={
        every row #1 column 1/.style={#2},
        every row #1 column 2/.style={#2},
        every row #1 column 3/.style={#2},
        every row #1 column 4/.style={#2},
        every row #1 column 5/.style={#2},
        every row #1 column 6/.style={#2},
        every row #1 column 7/.style={#2},
        every row #1 column 8/.style={#2},
        every row #1 column 9/.style={#2},
        every row #1 column 10/.style={#2},
    }
}



\pgfplotstableread[col sep=ampersand,row sep=\\]{
ndof & L2uBR+ &	 H1uBR+ & L2rhoBR+ & L2uBR &	 H1uBR & L2rhoBR \\
     161 & 1.2082e-03    & 5.4065e-02    & 6.3120e-02    & 1.9269e-03    & 5.3870e-02    & 6.5116e-02 \\
     617 & 4.0813e-04    & 2.7731e-02    & 2.9839e-02    & 6.2364e-04    & 2.9080e-02    & 3.0374e-02 \\
    2297 & 1.4683e-04    & 1.3630e-02    & 1.5122e-02    & 2.0207e-04    & 1.4096e-02    & 1.5321e-02 \\
    9152 & 6.4183e-05    & 7.1482e-03    & 7.7596e-03    & 7.8111e-05    & 7.2210e-03    & 7.8279e-03 \\
   36326 & 3.0468e-05    & 3.6606e-03    & 3.9320e-03    & 3.3765e-05    & 3.6962e-03    & 3.9550e-03 \\
  143945 & 1.4883e-05    & 1.8727e-03    & 2.0171e-03    & 1.5643e-05    & 1.9041e-03    & 2.0243e-03 \\
  573386 & 7.3092e-06    & 9.6475e-04    & 1.0320e-03    & 7.4948e-06    & 9.8005e-04    & 1.0339e-03 \\
 2290184 & 3.6278e-06    & 4.9842e-04    & 5.2880e-04    & 3.6733e-06    & 5.0544e-04    & 5.2941e-04 \\
}\tableALaplace 

\pgfplotstableread[col sep=ampersand,row sep=\\]{
ndof & L2uBR+ &	 H1uBR+ & L2rhoBR+ & L2uBR &	 H1uBR & L2rhoBR \\
     161 & 8.6442e-04    & 4.4191e-02    & 6.2690e-04    & 1.5721e-03    & 4.5656e-02    & 6.4409e-04 \\
     617 & 2.1643e-04    & 2.1769e-02    & 2.9531e-04    & 4.2097e-04    & 2.3960e-02    & 3.0026e-04 \\
    2297 & 6.0194e-05    & 1.1079e-02    & 1.4831e-04    & 1.1433e-04    & 1.2051e-02    & 1.5055e-04 \\
    9152 & 1.4384e-05    & 5.4404e-03    & 7.5351e-05    & 2.8489e-05    & 5.8709e-03    & 7.6470e-05 \\
   36326 & 3.6824e-06    & 2.7316e-03    & 3.7577e-05    & 7.3144e-06    & 2.9517e-03    & 3.8068e-05 \\
  143945 & 9.2259e-07    & 1.3620e-03    & 1.8871e-05    & 1.8600e-06    & 1.4805e-03    & 1.9099e-05 \\
  573386 & 2.3475e-07    & 6.8028e-04    & 9.4254e-06    & 4.8238e-07    & 7.3940e-04    & 9.5345e-06 \\
 2290184 & 6.5525e-08    & 3.4190e-04    & 4.7155e-06    & 1.3069e-07    & 3.7105e-04    & 4.7674e-06 \\
}\tableBLaplace 

\pgfplotstableread[col sep=ampersand,row sep=\\]{
ndof & L2uBR+ &	 H1uBR+ & L2rhoBR+ & L2uBR &	 H1uBR & L2rhoBR \\
     161 & 1.6429e-03    & 5.3073e-02    & 6.2969e-02    & 2.3637e-03    & 5.2485e-02    & 6.5804e-02 \\
     617 & 5.1756e-04    & 2.7189e-02    & 2.9634e-02    & 7.3306e-04    & 2.7851e-02    & 3.0369e-02 \\
    2297 & 1.7347e-04    & 1.3641e-02    & 1.4928e-02    & 2.3427e-04    & 1.3895e-02    & 1.5211e-02 \\
    9152 & 6.8741e-05    & 7.3125e-03    & 7.6388e-03    & 8.3936e-05    & 7.2857e-03    & 7.7530e-03 \\
   36326 & 3.1697e-05    & 3.9700e-03    & 3.8534e-03    & 3.5254e-05    & 3.9355e-03    & 3.8967e-03 \\
  143945 & 1.5299e-05    & 2.2214e-03    & 1.9698e-03    & 1.6099e-05    & 2.2069e-03    & 1.9856e-03 \\
  573386 & 7.4367e-06    & 1.2497e-03    & 1.0084e-03    & 7.6331e-06    & 1.2456e-03    & 1.0136e-03 \\
 2290184 & 3.6650e-06    & 6.9508e-04    & 5.1834e-04    & 3.7132e-06    & 6.9268e-04    & 5.2018e-04 \\
}\tableA 

\pgfplotstableread[col sep=ampersand,row sep=\\]{
ndof & L2uBR+ &	 H1uBR+ & L2rhoBR+ & L2uBR &	 H1uBR & L2rhoBR \\
     161 &  1.4263e-03    & 4.2681e-02    & 6.2661e-04    & 2.0767e-03    & 4.3959e-02    & 6.4982e-04 \\
     617 &  3.3656e-04    & 2.1185e-02    & 2.9504e-04    & 5.2617e-04    & 2.2722e-02    & 3.0132e-04 \\
    2297 &  9.3756e-05    & 1.0842e-02    & 1.4818e-04    & 1.4963e-04    & 1.1692e-02    & 1.5077e-04 \\
    9152 &  2.0188e-05    & 5.3258e-03    & 7.5321e-05    & 3.4998e-05    & 5.7093e-03    & 7.6607e-05 \\
   36326 &  5.2150e-06    & 2.6663e-03    & 3.7571e-05    & 8.9574e-06    & 2.8614e-03    & 3.8124e-05 \\
  143945 &  1.2726e-06    & 1.3287e-03    & 1.8868e-05    & 2.2227e-06    & 1.4336e-03    & 1.9125e-05 \\
  573386 &  3.2006e-07    & 6.6380e-04    & 9.4218e-06    & 5.6830e-07    & 7.1617e-04    & 9.5470e-06 \\
 2290184 &  8.5526e-08    & 3.3392e-04    & 4.7099e-06    & 1.5080e-07    & 3.5954e-04    & 4.7716e-06 \\
}\tableB 

\pgfplotstabletranspose[string type,
	create on use/Lrate+/.style={create col/dyadic refinement rate={L2uBR+}},
	create on use/Lrate/.style={create col/dyadic refinement rate={L2uBR}},
	create on use/Hrate+/.style={create col/dyadic refinement rate={H1uBR+}},
	create on use/Hrate/.style={create col/dyadic refinement rate={H1uBR}},
	create on use/Drate+/.style={create col/dyadic refinement rate={L2rhoBR+}},
	create on use/Drate/.style={create col/dyadic refinement rate={L2rhoBR}},
    columns={ndof,L2uBR+,Lrate+,L2uBR,Lrate,H1uBR+,Hrate+,H1uBR,Hrate,L2rhoBR+,Drate+,L2rhoBR,Drate},
    colnames from=ndof,
    input colnames to=ndof
]\tableAtranspose{\tableA}

\pgfplotstabletranspose[string type,
	create on use/Lrate+/.style={create col/dyadic refinement rate={L2uBR+}},
	create on use/Lrate/.style={create col/dyadic refinement rate={L2uBR}},
	create on use/Hrate+/.style={create col/dyadic refinement rate={H1uBR+}},
	create on use/Hrate/.style={create col/dyadic refinement rate={H1uBR}},
	create on use/Drate+/.style={create col/dyadic refinement rate={L2rhoBR+}},
	create on use/Drate/.style={create col/dyadic refinement rate={L2rhoBR}},
    columns={ndof,L2uBR+,Lrate+,L2uBR,Lrate,H1uBR+,Hrate+,H1uBR,Hrate,L2rhoBR+,Drate+,L2rhoBR,Drate},
    colnames from=ndof,
    input colnames to=ndof
]\tableBtranspose{\tableB}


\pgfplotstableread[col sep=ampersand,row sep=\\]{
ndof & L2uBR+ &	 H1uBR+ & L2rhoBR+ & L2uBR &	 H1uBR & L2rhoBR \\
     161 & 1.2128e-03    & 5.4021e-02    & 6.2510e-02    & 1.2635e-02    & 1.9599e-01    & 6.4103e-02 \\
     617 & 4.1582e-04    & 2.7793e-02    & 2.9453e-02    & 3.6084e-03    & 1.1880e-01    & 2.9974e-02 \\
    2297 & 1.5683e-04    & 1.4087e-02    & 1.4795e-02    & 9.6923e-04    & 5.9314e-02    & 1.5012e-02 \\
    9152 & 6.9307e-05    & 7.8064e-03    & 7.5269e-03    & 2.6785e-04    & 3.1139e-02    & 7.6279e-03 \\
   36326 & 3.3052e-05    & 4.7538e-03    & 3.7638e-03    & 7.9242e-05    & 1.5723e-02    & 3.8063e-03 \\
  143945 & 1.6060e-05    & 3.1713e-03    & 1.9013e-03    & 2.6378e-05    & 8.2823e-03    & 1.9192e-03 \\
  573386 & 7.7759e-06    & 2.1674e-03    & 9.5994e-04    & 1.0068e-05    & 4.4059e-03    & 9.6679e-04 \\
 2290184 & 3.7916e-06    & 1.4340e-03    & 4.8793e-04    & 4.3116e-06    & 2.4116e-03    & 4.9042e-04 \\
}\tableCLaplace 

\pgfplotstableread[col sep=ampersand,row sep=\\]{
ndof & L2uBR+ &	 H1uBR+ & L2rhoBR+ & L2uBR &	 H1uBR & L2rhoBR \\
     161 & 8.6442e-04    & 4.4192e-02    & 6.2502e-04    & 1.2314e-02    & 1.9282e-01    & 6.3937e-04 \\
     617 & 2.1643e-04    & 2.1769e-02    & 2.9447e-04    & 3.4397e-03    & 1.1699e-01    & 2.9956e-04 \\
    2297 & 6.0193e-05    & 1.1079e-02    & 1.4786e-04    & 8.9704e-04    & 5.9151e-02    & 1.5005e-04 \\
    9152 & 1.4382e-05    & 5.4403e-03    & 7.5160e-05    & 2.2965e-04    & 3.0773e-02    & 7.6240e-05 \\
   36326 & 3.6820e-06    & 2.7315e-03    & 3.7485e-05    & 5.8780e-05    & 1.5400e-02    & 3.7979e-05 \\
  143945 & 9.2245e-07    & 1.3620e-03    & 1.8826e-05    & 1.4857e-05    & 7.8251e-03    & 1.9064e-05 \\
  573386 & 2.3482e-07    & 6.8032e-04    & 9.4008e-06    & 3.7401e-06    & 3.9033e-03    & 9.5176e-06 \\
 2290184 & 6.5791e-08    & 3.4228e-04    & 4.6988e-06    & 9.4671e-07    & 1.9597e-03    & 4.7565e-06 \\
}\tableDLaplace 

\pgfplotstableread[col sep=ampersand,row sep=\\]{
ndof & L2uBR+ &	 H1uBR+ & L2rhoBR+ & L2uBR &	 H1uBR & L2rhoBR \\
     161 & 1.6497e-03    & 5.3034e-02    & 6.2507e-02  & 1.1878e-02    & 1.8153e-01    & 6.4675e-02 \\
     617 & 5.2159e-04    & 2.7223e-02    & 2.9449e-02  & 3.3246e-03    & 1.0583e-01    & 3.0120e-02 \\
    2297 & 1.7897e-04    & 1.4021e-02    & 1.4787e-02  & 9.1819e-04    & 5.5487e-02    & 1.5046e-02 \\
    9152 & 7.2451e-05    & 7.9356e-03    & 7.5185e-03  & 2.5641e-04    & 2.9105e-02    & 7.6418e-03 \\
   36326 & 3.4049e-05    & 5.2936e-03    & 3.7523e-03  & 7.6839e-05    & 1.4910e-02    & 3.8065e-03 \\
  143945 & 1.6616e-05    & 4.0835e-03    & 1.8882e-03  & 2.6159e-05    & 8.1567e-03    & 1.9130e-03 \\
  573386 & 8.0728e-06    & 3.2522e-03    & 9.4773e-04  & 1.0213e-05    & 4.7709e-03    & 9.5882e-04 \\
 2290184 & 3.9269e-06    & 2.4560e-03    & 4.7871e-04  & 4.4230e-06    & 3.0112e-03    & 4.8346e-04 \\
}\tableC 

\pgfplotstableread[col sep=ampersand,row sep=\\]{
ndof & L2uBR+ &	 H1uBR+ & L2rhoBR+ & L2uBR &	 H1uBR & L2rhoBR \\
     161 & 1.4264e-03    & 4.2681e-02    & 6.2502e-04   & 1.1617e-02    & 1.7927e-01    & 6.4491e-04 \\
     617 & 3.3656e-04    & 2.1185e-02    & 2.9447e-04   & 3.1659e-03    & 1.0462e-01    & 3.0097e-04 \\
    2297 & 9.3757e-05    & 1.0842e-02    & 1.4785e-04   & 8.4427e-04    & 5.5351e-02    & 1.5041e-04 \\
    9152 & 2.0188e-05    & 5.3256e-03    & 7.5160e-05   & 2.1667e-04    & 2.8738e-02    & 7.6399e-05 \\
   36326 & 5.2150e-06    & 2.6661e-03    & 3.7484e-05   & 5.5320e-05    & 1.4419e-02    & 3.8044e-05 \\
  143945 & 1.2726e-06    & 1.3285e-03    & 1.8826e-05   & 1.3974e-05    & 7.3379e-03    & 1.9094e-05 \\
  573386 & 3.2009e-07    & 6.6366e-04    & 9.4008e-06   & 3.5197e-06    & 3.6600e-03    & 9.5323e-06 \\
 2290184 & 8.5605e-08    & 3.3392e-04    & 4.6987e-06   & 8.9167e-07    & 1.8374e-03    & 4.7639e-06 \\
}\tableD 

\pgfplotstabletranspose[string type,
	create on use/Lrate+/.style={create col/dyadic refinement rate={L2uBR+}},
	create on use/Lrate/.style={create col/dyadic refinement rate={L2uBR}},
	create on use/Hrate+/.style={create col/dyadic refinement rate={H1uBR+}},
	create on use/Hrate/.style={create col/dyadic refinement rate={H1uBR}},
	create on use/Drate+/.style={create col/dyadic refinement rate={L2rhoBR+}},
	create on use/Drate/.style={create col/dyadic refinement rate={L2rhoBR}},
    columns={ndof,L2uBR+,Lrate+,L2uBR,Lrate,H1uBR+,Hrate+,H1uBR,Hrate,L2rhoBR+,Drate+,L2rhoBR,Drate},
    colnames from=ndof,
    input colnames to=ndof
]\tableCtranspose{\tableC}

\pgfplotstabletranspose[string type,
	create on use/Lrate+/.style={create col/dyadic refinement rate={L2uBR+}},
	create on use/Lrate/.style={create col/dyadic refinement rate={L2uBR}},
	create on use/Hrate+/.style={create col/dyadic refinement rate={H1uBR+}},
	create on use/Hrate/.style={create col/dyadic refinement rate={H1uBR}},
	create on use/Drate+/.style={create col/dyadic refinement rate={L2rhoBR+}},
	create on use/Drate/.style={create col/dyadic refinement rate={L2rhoBR}},
    columns={ndof,L2uBR+,Lrate+,L2uBR,Lrate,H1uBR+,Hrate+,H1uBR,Hrate,L2rhoBR+,Drate+,L2rhoBR,Drate},
    colnames from=ndof,
    input colnames to=ndof
]\tableDtranspose{\tableD}


\pgfplotstableread[col sep=ampersand,row sep=\\]{
ndof & L2uBR+ &	 H1uBR+ & L2rhoBR+ & L2uBR &	 H1uBR & L2rhoBR \\
     161 & 1.6506e-03    & 5.3032e-02    & 6.2501e-02 & 1.1784e-01    & 1.7533e+00    & 6.4601e-02 \\
     617 & 5.2210e-04    & 2.7231e-02    & 2.9447e-02 & 3.2129e-02    & 1.0254e+00    & 3.0115e-02 \\
    2297 & 1.7975e-04    & 1.4102e-02    & 1.4785e-02 & 8.4472e-03    & 5.3937e-01    & 1.5043e-02 \\
    9152 & 7.3126e-05    & 8.0954e-03    & 7.5158e-03 & 2.1775e-03    & 2.8330e-01    & 7.6393e-03 \\
   36326 & 3.4673e-05    & 5.7646e-03    & 3.7484e-03 & 5.6166e-04    & 1.4128e-01    & 3.8042e-03 \\
  143945 & 1.7145e-05    & 5.0026e-03    & 1.8826e-03 & 1.4670e-04    & 7.2165e-02    & 1.9093e-03 \\
  573386 & 8.4871e-06    & 4.7274e-03    & 9.4023e-04 & 3.9269e-05    & 3.6179e-02    & 9.5321e-04 \\
 2290184 & 4.2037e-06    & 4.4438e-03    & 4.7016e-04 & 1.1266e-05    & 1.8505e-02    & 4.7649e-04 \\
}\tableE 

\pgfplotstableread[col sep=ampersand,row sep=\\]{
ndof & L2uBR+ &	 H1uBR+ & L2rhoBR+ & L2uBR &	 H1uBR & L2rhoBR \\
     161 & 1.4264e-03    & 4.2681e-02    & 6.2500e-04 & 1.1603e-01    & 1.7508e+00    & 6.4457e-04\\
     617 & 3.3656e-04    & 2.1185e-02    & 2.9446e-04 & 3.1581e-02    & 1.0238e+00    & 3.0099e-04\\
    2297 & 9.3757e-05    & 1.0842e-02    & 1.4785e-04 & 8.3184e-03    & 5.4242e-01    & 1.5040e-04\\
    9152 & 2.0188e-05    & 5.3256e-03    & 7.5158e-05 & 2.1405e-03    & 2.8353e-01    & 7.6391e-05\\
   36326 & 5.2150e-06    & 2.6661e-03    & 3.7484e-05 & 5.4698e-04    & 1.4215e-01    & 3.8043e-05\\
  143945 & 1.2726e-06    & 1.3285e-03    & 1.8826e-05 & 1.3817e-04    & 7.2354e-02    & 1.9093e-05\\
  573386 & 3.2009e-07    & 6.6364e-04    & 9.4006e-06 & 3.4628e-05    & 3.6073e-02    & 9.5324e-06\\
 2290184 & 8.5615e-08    & 3.3392e-04    & 4.6986e-06 & 8.6884e-06    & 1.8115e-02    & 4.7639e-06\\
}\tableF 

\pgfplotstabletranspose[string type,
	create on use/Lrate+/.style={create col/dyadic refinement rate={L2uBR+}},
	create on use/Lrate/.style={create col/dyadic refinement rate={L2uBR}},
	create on use/Hrate+/.style={create col/dyadic refinement rate={H1uBR+}},
	create on use/Hrate/.style={create col/dyadic refinement rate={H1uBR}},
	create on use/Drate+/.style={create col/dyadic refinement rate={L2rhoBR+}},
	create on use/Drate/.style={create col/dyadic refinement rate={L2rhoBR}},
    columns={ndof,L2uBR+,Lrate+,L2uBR,Lrate,H1uBR+,Hrate+,H1uBR,Hrate,L2rhoBR+,Drate+,L2rhoBR,Drate},
    colnames from=ndof,
    input colnames to=ndof
]\tableEtranspose{\tableE}

\pgfplotstabletranspose[string type,
	create on use/Lrate+/.style={create col/dyadic refinement rate={L2uBR+}},
	create on use/Lrate/.style={create col/dyadic refinement rate={L2uBR}},
	create on use/Hrate+/.style={create col/dyadic refinement rate={H1uBR+}},
	create on use/Hrate/.style={create col/dyadic refinement rate={H1uBR}},
	create on use/Drate+/.style={create col/dyadic refinement rate={L2rhoBR+}},
	create on use/Drate/.style={create col/dyadic refinement rate={L2rhoBR}},
    columns={ndof,L2uBR+,Lrate+,L2uBR,Lrate,H1uBR+,Hrate+,H1uBR,Hrate,L2rhoBR+,Drate+,L2rhoBR,Drate},
    colnames from=ndof,
    input colnames to=ndof
]\tableFtranspose{\tableF}


\pgfplotstableread[col sep=ampersand,row sep=\\]{
ndof & L2uBR+ &	 H1uBR+ & L2rhoBR+ & L2uBR &	 H1uBR & L2rhoBR \\
     161 & 1.6507e-03    & 5.3032e-02    & 6.2501e-02    & 1.1800e+01    & 1.7536e+02    & 6.4595e-02 \\
     617 & 5.2216e-04    & 2.7232e-02    & 2.9447e-02    & 3.2125e+00    & 1.0248e+02    & 3.0116e-02 \\
    2297 & 1.7984e-04    & 1.4112e-02    & 1.4785e-02    & 8.4094e-01    & 5.3917e+01    & 1.5042e-02 \\
    9152 & 7.3207e-05    & 8.1158e-03    & 7.5158e-03    & 2.1488e-01    & 2.8341e+01    & 7.6393e-03 \\
   36326 & 3.4755e-05    & 5.8302e-03    & 3.7484e-03    & 5.4551e-02    & 1.4127e+01    & 3.8043e-03 \\
  143945 & 1.7226e-05    & 5.1487e-03    & 1.8826e-03    & 1.3813e-02    & 7.2075e+00    & 1.9093e-03 \\
  573386 & 8.5660e-06    & 5.0221e-03    & 9.4006e-04    & 3.4634e-03    & 3.5945e+00    & 9.5324e-04 \\
 2290184 & 4.2764e-06    & 4.9892e-03    & 4.6986e-04    & 8.6860e-04    & 1.8040e+00    & 4.7639e-04 \\
}\tableG 

\pgfplotstableread[col sep=ampersand,row sep=\\]{
ndof & L2uBR+ &	 H1uBR+ & L2rhoBR+ & L2uBR &	 H1uBR & L2rhoBR \\
     161 & 1.4264e-03    & 4.2681e-02    & 6.2500e-04    & 1.1616e+01    & 1.7508e+02    & 6.4453e-04 \\
     617 & 3.3656e-04    & 2.1185e-02    & 2.9446e-04    & 3.1603e+00    & 1.0232e+02    & 3.0099e-04 \\
    2297 & 9.3757e-05    & 1.0842e-02    & 1.4785e-04    & 8.3137e-01    & 5.4215e+01    & 1.5040e-04 \\
    9152 & 2.0188e-05    & 5.3256e-03    & 7.5158e-05    & 2.1393e-01    & 2.8354e+01    & 7.6390e-05 \\
   36326 & 5.2150e-06    & 2.6661e-03    & 3.7484e-05    & 5.4673e-02    & 1.4214e+01    & 3.8043e-05 \\
  143945 & 1.2726e-06    & 1.3285e-03    & 1.8826e-05    & 1.3810e-02    & 7.2345e+00    & 1.9093e-05 \\
  573386 & 3.2013e-07    & 6.6364e-04    & 9.4006e-06    & 3.4591e-03    & 3.6067e+00    & 9.5324e-06 \\
 2290184 & 8.5674e-08    & 3.3392e-04    & 4.6986e-06    & 8.6698e-04    & 1.8113e+00    & 4.7640e-06 \\
}\tableH 

\pgfplotstabletranspose[string type,
	create on use/Lrate+/.style={create col/dyadic refinement rate={L2uBR+}},
	create on use/Lrate/.style={create col/dyadic refinement rate={L2uBR}},
	create on use/Hrate+/.style={create col/dyadic refinement rate={H1uBR+}},
	create on use/Hrate/.style={create col/dyadic refinement rate={H1uBR}},
	create on use/Drate+/.style={create col/dyadic refinement rate={L2rhoBR+}},
	create on use/Drate/.style={create col/dyadic refinement rate={L2rhoBR}},
    columns={ndof,L2uBR+,Lrate+,L2uBR,Lrate,H1uBR+,Hrate+,H1uBR,Hrate,L2rhoBR+,Drate+,L2rhoBR,Drate},
    colnames from=ndof,
    input colnames to=ndof
]\tableGtranspose{\tableG}

\pgfplotstabletranspose[string type,
	create on use/Lrate+/.style={create col/dyadic refinement rate={L2uBR+}},
	create on use/Lrate/.style={create col/dyadic refinement rate={L2uBR}},
	create on use/Hrate+/.style={create col/dyadic refinement rate={H1uBR+}},
	create on use/Hrate/.style={create col/dyadic refinement rate={H1uBR}},
	create on use/Drate+/.style={create col/dyadic refinement rate={L2rhoBR+}},
	create on use/Drate/.style={create col/dyadic refinement rate={L2rhoBR}},
    columns={ndof,L2uBR+,Lrate+,L2uBR,Lrate,H1uBR+,Hrate+,H1uBR,Hrate,L2rhoBR+,Drate+,L2rhoBR,Drate},
    colnames from=ndof,
    input colnames to=ndof
]\tableHtranspose{\tableH}


\begin{figure}
\centering
\begin{tikzpicture}
\begin{axis}[
         ymin=1e-8,
         ymax=1e-1,
         xmin=1e2,
         xmax=1e7,
        xmode=log,
        ymode=log,
        xlabel near ticks,
        ylabel near ticks,
        xlabel=ndof,
        width=0.405\textwidth,
        height=175pt,
       title=,
       legend style={at={(1.1,0.0)},anchor=south west,legend cell align=left}]
\addplot[color=blue,mark=o] table[x index=0, y index=1]{\tableA};    
\addlegendentry{\tiny $\| \vecb{u} - \vecb{u}^+_h \|_{L^2}$}
\addplot[color=red,mark=o] table[x index=0, y index=4]{\tableA};   
\addlegendentry{\tiny $\| \vecb{u} - \vecb{u}_h \|_{L^2}$}

\addplot[color=blue,mark=*] table[x index=0, y index=2]{\tableA};    
\addlegendentry{\tiny $\| \nabla(\vecb{u} - \vecb{u}^+_h) \|_{L^2}$}
\addplot[color=red,mark=*] table[x index=0, y index=5]{\tableA};     
\addlegendentry{\tiny $\| \nabla(\vecb{u} - \vecb{u}_h) \|_{L^2}$}

\addplot[color=blue,mark=x] table[x index=0, y index=3]{\tableA};    
\addlegendentry{\tiny $\| \varrho - \varrho^+_h \|_{L^2}$}
\addplot[color=red,mark=x] table[x index=0, y index=6]{\tableA};     
\addlegendentry{\tiny $\| \varrho - \varrho_h \|_{L^2}$}

\addplot[domain=1e0:1e7, color=gray, dashed]{0.4/pow(x,0.5)};
\addlegendentry{\tiny $h$}
\addplot[domain=1e0:1e7, color=gray,dotted]{1e-1/pow(x,1)};
\addlegendentry{\tiny $h^2$}
\end{axis}
\end{tikzpicture}
\hfill
\begin{tikzpicture}
\begin{axis}[
         ymin=1e-8,
         ymax=1e-1,
         xmin=1e2,
         xmax=1e7,
        xmode=log,
        ymode=log,
        xlabel near ticks,
        ylabel near ticks,
        xlabel=ndof,
        width=0.405\textwidth,
        height=175pt,
       title=,]
\addplot[color=blue,mark=o] table[x index=0, y index=1]{\tableB}; 
\addplot[color=red,mark=o] table[x index=0, y index=4]{\tableB};  
  
\addplot[color=blue,mark=*] table[x index=0, y index=2]{\tableB}; 
\addplot[color=red,mark=*] table[x index=0, y index=5]{\tableB}; 
   
\addplot[color=blue,mark=x] table[x index=0, y index=3]{\tableB};  
\addplot[color=red,mark=x] table[x index=0, y index=6]{\tableB}; 

\addplot[domain=1e0:1e7, color=gray, dashed]{0.4/pow(x,0.5)};
\addplot[domain=1e0:1e7, color=gray,dotted]{1e-1/pow(x,1)};
\end{axis}
\end{tikzpicture}

\caption{\label{fig:resultsB_mu1}
Convergence histories for the modified method and classical method for $\gamma=1.4$ and $\mu = 1$ and $c=1$ (left), $c=100$ (right).}
\end{figure}

\begin{figure}
\centering
\begin{tikzpicture}
\begin{axis}[
         ymin=1e-8,
         ymax=1e3,
         xmin=1e2,
         xmax=1e7,
        xmode=log,
        ymode=log,
        xlabel near ticks,
        ylabel near ticks,
        xlabel=ndof,
        width=0.405\textwidth,
        height=175pt,
       title=,
       legend style={at={(1.1,0.0)},anchor=south west,legend cell align=left}]
\addplot[color=blue,mark=o] table[x index=0, y index=1]{\tableG};    
\addlegendentry{\tiny $\| \vecb{u} - \vecb{u}^+_h \|_{L^2}$}
\addplot[color=red,mark=o] table[x index=0, y index=4]{\tableG};   
\addlegendentry{\tiny $\| \vecb{u} - \vecb{u}_h \|_{L^2}$}

\addplot[color=blue,mark=*] table[x index=0, y index=2]{\tableG};    
\addlegendentry{\tiny $\| \nabla(\vecb{u} - \vecb{u}^+_h) \|_{L^2}$}
\addplot[color=red,mark=*] table[x index=0, y index=5]{\tableG};     
\addlegendentry{\tiny $\| \nabla(\vecb{u} - \vecb{u}_h) \|_{L^2}$}

\addplot[color=blue,mark=x] table[x index=0, y index=3]{\tableG};    
\addlegendentry{\tiny $\| \varrho - \varrho^+_h \|_{L^2}$}
\addplot[color=red,mark=x] table[x index=0, y index=6]{\tableG};     
\addlegendentry{\tiny $\| \varrho - \varrho_h \|_{L^2}$}

\addplot[domain=1e0:1e7, color=gray, dashed]{0.4/pow(x,0.5)};
\addlegendentry{\tiny $h$}
\addplot[domain=1e0:1e7, color=gray,dotted]{1e-1/pow(x,1)};
\addlegendentry{\tiny $h^2$}
\end{axis}
\end{tikzpicture}
\hfill
\begin{tikzpicture}
\begin{axis}[
         ymin=1e-8,
         ymax=1e3,
         xmin=1e2,
         xmax=1e7,
        xmode=log,
        ymode=log,
        xlabel near ticks,
        ylabel near ticks,
        xlabel=ndof,
        width=0.405\textwidth,
        height=175pt,
       title=,]
\addplot[color=blue,mark=o] table[x index=0, y index=1]{\tableH}; 
\addplot[color=red,mark=o] table[x index=0, y index=4]{\tableH};  
  
\addplot[color=blue,mark=*] table[x index=0, y index=2]{\tableH}; 
\addplot[color=red,mark=*] table[x index=0, y index=5]{\tableH}; 
   
\addplot[color=blue,mark=x] table[x index=0, y index=3]{\tableH};  
\addplot[color=red,mark=x] table[x index=0, y index=6]{\tableH}; 

\addplot[domain=1e0:1e7, color=gray, dashed]{0.4/pow(x,0.5)};
\addplot[domain=1e0:1e7, color=gray,dotted]{1e-1/pow(x,1)};
\end{axis}
\end{tikzpicture}

\caption{\label{fig:resultsB_mu10000}
Convergence histories for the modified method and classical method for $\gamma=1.4$ and $\mu = 10^{-4}$ and $c=1$ (left), $c=100$ (right).}
\end{figure}
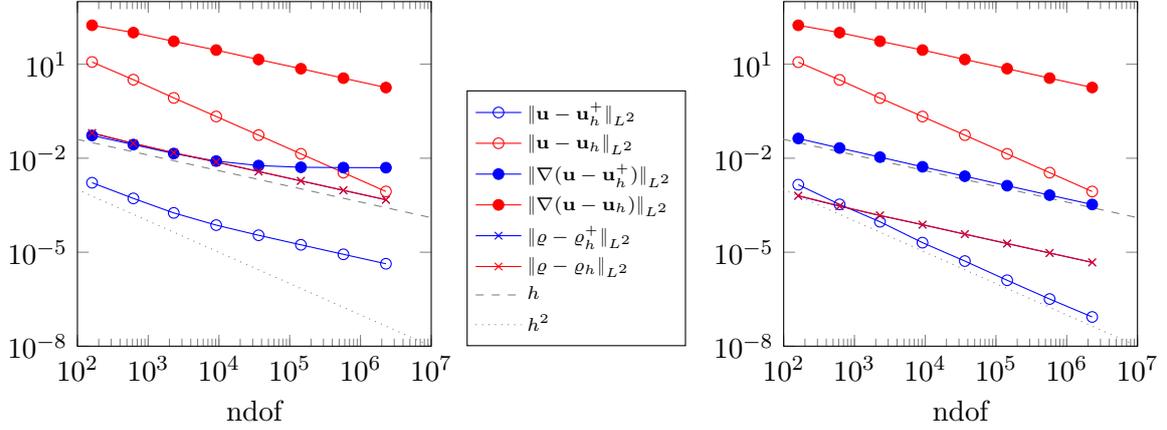

Figures~\ref{fig:resultsB_mu1} and \ref{fig:resultsB_mu10000} show some results for
\(\gamma = 1.4\) and \(\mu=1\) and \(\mu=10^{-4}\), respectively. The convergence histories
of the error are very similar to the isothermal case \(\gamma =1\), quantitatively and
qualitatively concerning the locking behavior and the suboptimal convergence rates for small \(mu\) and small \(c\).

\begin{remark}
The experiments convey that the suboptimal convergence rates on finer meshes have to do with the discretization
of the continuity equation and the compressibility of the fluid. If \(\mathrm{div}(\vecb{u}) \neq 0\), the upwind discretization introduces an error that does not allow any guaranteed convergence rates for the error
of the velocity gradient. However, due to an Aubin--Nitsche argument, the linear convergence of the \(L^2\) error
of the velocity is still granted and was observed in all experiments.
\end{remark}

%
%

\subsection{Incompressibility limit}\label{sec:example_incompressible_limit}
This example on the unit square \(\Omega := (0,1)^2\) examines the exact solution
\begin{align*}
  \vecb{u} := 0, \qquad p = \varphi(\varrho) := c \varrho^\gamma,
  \qquad \varrho(x,y) := 1.0 + (y-1/2)/c
\end{align*}
for \(\mu=1\) and \(\lambda = -2/3\).
These functions satisfy the compressible Stokes system with
the right-hand side functions
\begin{align*}
 \vecb{f} = 0 \quad \text{and} \quad \vecb{g} = \gamma \varrho^{\gamma-2}\begin{pmatrix}0\\1\end{pmatrix}.
 \end{align*}
Note that the constant \(c\) behaves like the
squared inverse of the Mach number and for \(c \to \infty\)
the compressible System converges to the incompressible Stokes system, i.e.,
the density converges to the constant function \(1.0\).

\pgfplotstableread[col sep=ampersand,row sep=\\]{
ndof & L2uBR+ &	 H1uBR+ & L2rhoBR+ & L2uBR &	 H1uBR & L2rhoBR \\
     161 & 2.2053e-05 &      3.7917e-04 &    6.2504e-02 & 8.9555e-04 &   1.3380e-02 &    6.3856e-02 \\
     617 & 1.0756e-05 &      2.0488e-04 &    2.9448e-02 & 2.4783e-04 &   8.1682e-03 &    2.9954e-02 \\
    2297 & 1.7069e-06 &      5.9351e-05 &    1.4785e-02 & 6.4319e-05 &   4.1439e-03 &    1.5004e-02 \\
    9152 & 3.1842e-07 &      1.5058e-05 &    7.5158e-03 & 1.6349e-05 &   2.1742e-03 &    7.6230e-03 \\
   36326 & 4.5439e-08 &      4.0321e-06 &    3.7484e-03 & 4.1878e-06 &   1.0860e-03 &    3.7978e-03 \\
  143945 & 6.1867e-09 &      1.0165e-06 &    1.8826e-03 & 1.0555e-06 &   5.5162e-04 &    1.9064e-03 \\
  573386 & 8.0138e-10 &      2.5576e-07 &    9.4006e-04 & 2.6432e-07 &   2.7507e-04 &    9.5180e-04 \\
}\tableAA

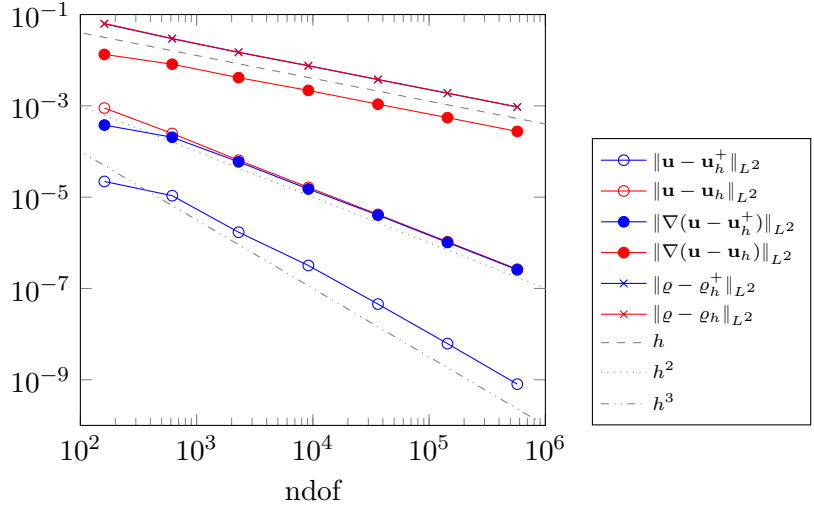
\begin{figure}
\centering
\begin{tikzpicture}
\begin{axis}[
         ymin=1e-10,
         ymax=1e-1,
         xmin=1e2,
         xmax=1e6,
        xmode=log,
        ymode=log,
        xlabel near ticks,
        ylabel near ticks,
        xlabel=ndof,
        width=0.5\textwidth,
        height=200pt,
       title=,
       legend style={at={(1.1,0.0)},anchor=south west,legend cell align=left}]
\addplot[color=blue,mark=o] table[x index=0, y index=1]{\tableAA};    
\addlegendentry{\tiny $\| \vecb{u} - \vecb{u}^+_h \|_{L^2}$}
\addplot[color=red,mark=o] table[x index=0, y index=4]{\tableAA};   
\addlegendentry{\tiny $\| \vecb{u} - \vecb{u}_h \|_{L^2}$}

\addplot[color=blue,mark=*] table[x index=0, y index=2]{\tableAA};    
\addlegendentry{\tiny $\| \nabla(\vecb{u} - \vecb{u}^+_h) \|_{L^2}$}
\addplot[color=red,mark=*] table[x index=0, y index=5]{\tableAA};     
\addlegendentry{\tiny $\| \nabla(\vecb{u} - \vecb{u}_h) \|_{L^2}$}

\addplot[color=blue,mark=x] table[x index=0, y index=3]{\tableAA};    
\addlegendentry{\tiny $\| \varrho - \varrho^+_h \|_{L^2}$}
\addplot[color=red,mark=x] table[x index=0, y index=6]{\tableAA};     
\addlegendentry{\tiny $\| \varrho - \varrho_h \|_{L^2}$}

\addplot[domain=1e0:1e7, color=gray, dashed]{0.4/pow(x,0.5)};
\addlegendentry{\tiny $h$}
\addplot[domain=1e0:1e7, color=gray,dotted]{1e-1/pow(x,1)};
\addlegendentry{\tiny $h^2$}
\addplot[domain=1e0:1e7, color=gray,dash dot dot]{1e-1/pow(x,1.5)};
\addlegendentry{\tiny $h^3$}
\end{axis}
\end{tikzpicture}
\caption{\label{fig:example2_c1_alpha1}Convergence histories for the modified gradient-robust scheme $(\vecb{u}_h^+,\rho^+)$ and the classical scheme $(\vecb{u}_h,\rho)$ for \(c=1\) and \(\gamma=1\) on unstructured meshes in Section~\ref{sec:example_incompressible_limit}.}
\end{figure}

\begin{table}
{\footnotesize
\pgfplotstabletypeset[columns={ndof,L2uBR+,H1uBR+,L2rhoBR+,L2uBR,H1uBR,L2rhoBR},
every head row/.style={before row={\toprule}, after row={\midrule}},
every last row/.style={after row={\bottomrule}},
columns/ndof/.style={int detect, column type={c}, column name=\textsc{ndof},set thousands separator={}},
columns/L2uBR+/.style={precision=4,sci,sci 10e,sci zerofill, column name=$\| \vecb{u} - \vecb{u}^+_h\|_{L^2} $},
columns/H1uBR+/.style={precision=4,sci,sci 10e,sci zerofill, column name=$\| \nabla(\vecb{u} - \vecb{u}^+_h)\|_{L^2} $},
columns/L2rhoBR+/.style={precision=4,sci,sci 10e,sci zerofill, column name=$\| \rho - \rho^+_h\|_{L^2} $},
columns/L2uBR/.style={precision=4,sci,sci 10e,sci zerofill, column name=$\| \vecb{u} - \vecb{u}_h\|_{L^2} $},
columns/H1uBR/.style={precision=4,sci,sci 10e,sci zerofill, column name=$\| \nabla(\vecb{u} - \vecb{u}_h)\|_{L^2} $},
columns/L2rhoBR/.style={precision=4,sci,sci 10e,sci zerofill, column name=$\| \rho - \rho_h\|_{L^2} $}
]\tableAA
}
\caption{\label{tab:example2_c1_alpha1}Errors of the modified gradient-robust scheme $(\vecb{u}_h^+,\rho^+)$ and the classical scheme $(\vecb{u}_h,\rho)$ for \(c=1\) and \(\gamma=1\) on unstructured meshes in Section~\ref{sec:example_incompressible_limit}.}
\end{table}

Table~\ref{tab:example2_c1_alpha1} compares the error of the solutions of the classical Bernardi--Raugel scheme (\(\Pi=1\)) with
the gradient-robust scheme (where \(\Pi\) is chosen as described above) for \(c=1\) and \(\gamma=1\). One can clearly see, that
the velocity errors of the novel scheme are improved by about two orders of magnitudes and also show some superconvergence behavior as depicted in Figure~\ref{fig:example2_c1_alpha1}, i.e., the \(L^2\) velocity gradient error converges quadratically.

\begin{figure}
\hfill
\includegraphics[width = 0.33\textwidth, trim=20mm 20mm 20mm 20mm, clip]{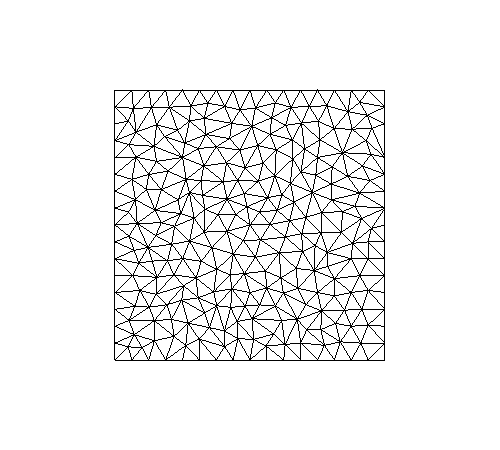}
\hfill
\includegraphics[width = 0.33\textwidth, trim=20mm 20mm 20mm 20mm, clip]{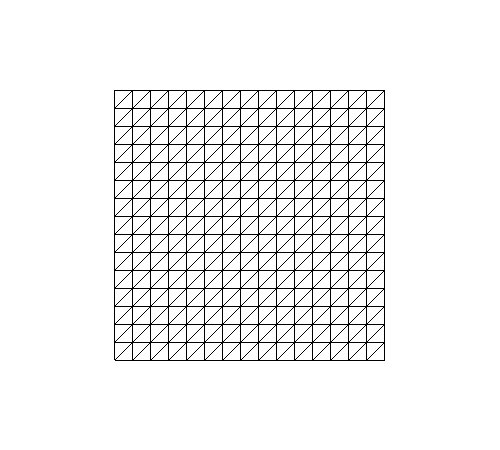}
\hfill
\hfill
\caption{\label{fig:meshes}Unstructured grid (G1) with \(489\) triangles (left) and structured mesh grid (G2) with \(450\) triangles (right) used in the examples from Section~\ref{sec:example_incompressible_limit} with varying $c$.}
\end{figure}

\pgfplotstableread[col sep=ampersand,row sep=\\]{
c & L2uBRunstruct+ &	L2uBRstruct+ & L2uBRunstruct & L2uBRstruct \\
     1 & 1.0219e-04 & 2.9412e-13 & 8.6144e-03 & 1.3473e-02 \\
    10 & 1.0236e-05 & 9.3288e-14 & 8.2877e-03 & 1.3001e-02 \\
   100 & 1.0236e-06 & 7.8271e-13 & 8.2862e-03 & 1.2996e-02 \\
  1000 & 1.0236e-07 & 8.3695e-12 & 8.2864e-03 & 1.2996e-02 \\
 10000 & 1.0236e-08 & 6.8088e-11 & 8.2864e-03 & 1.2996e-02 \\
}\tableBgA 

\pgfplotstableread[col sep=ampersand,row sep=\\]{
c & L2uBRunstruct+ &	L2uBRstruct+ & L2uBRunstruct & L2uBRstruct \\
     1 & 7.6708e-05 & 1.8710e-06 & 5.7730e-03 & 9.0664e-03 \\
    10 & 7.1800e-06 & 1.3956e-08 & 5.7996e-03 & 9.0971e-03 \\
   100 & 7.1664e-07 & 1.3977e-10 & 5.8004e-03 & 9.0974e-03 \\
  1000 & 7.1653e-08 & 1.4607e-11 & 5.8005e-03 & 9.0974e-03 \\
 10000 & 7.1665e-09 & 1.0489e-10 & 5.8005e-03 & 9.0974e-03 \\
}\tableBgB 

\pgfplotstableread[col sep=ampersand,row sep=\\]{
c & L2uBRunstruct+ &	L2uBRstruct+ & L2uBRunstruct & L2uBRstruct \\
     1 & 5.9299e-05 & 2.5549e-06 & 4.1439e-03 & 6.4992e-03 \\
    10 & 5.1372e-06 & 1.6635e-08 & 4.1432e-03 & 6.4981e-03 \\
   100 & 5.1194e-07 & 1.6628e-10 & 4.1432e-03 & 6.4982e-03 \\
  1000 & 5.1181e-08 & 1.5016e-11 & 4.1432e-03 & 6.4982e-03 \\
 10000 & 5.1275e-09 & 1.6017e-10 & 4.1432e-03 & 6.4982e-03 \\
}\tableBgC 

\begin{table}
{\footnotesize
\pgfplotstabletypeset[columns={c,L2uBRunstruct+,L2uBRunstruct,L2uBRstruct+,L2uBRstruct},
every head row/.style={before row={\toprule}, after row={\midrule}},
every last row/.style={after row={\bottomrule}},
columns/c/.style={int detect, column type={c}, column name=$c$,set thousands separator={}},
columns/L2uBRunstruct+/.style={precision=4,sci,sci 10e,sci zerofill, column name=$\| \nabla(\vecb{u} - \vecb{u}^+_h)\|_{L^2} \text{ (G1)} $},
columns/L2uBRstruct+/.style={precision=4,sci,sci 10e,sci zerofill, column name=$\| \nabla(\vecb{u} - \vecb{u}^+_h)\|_{L^2} \text{ (G2)} $},
columns/L2uBRunstruct/.style={precision=4,sci,sci 10e,sci zerofill, column name=$\| \nabla(\vecb{u} - \vecb{u}_h)\|_{L^2} \text{ (G1)} $},
columns/L2uBRstruct/.style={precision=4,sci,sci 10e,sci zerofill, column name=$\| \nabla(\vecb{u} - \vecb{u}_h)\|_{L^2} \text{ (G2)} $}
]\tableBgA
}
\caption{\label{tab:example2_alpha2_meshcomparison}Errors \(\| \nabla(\vecb{u} - \vecb{u}_h) \|_{L^2}\) of the classical and gradient-robust scheme computed on the two fixed grids from Figure~\ref{fig:meshes}
for \(\gamma=2\) and different choices of \(c\)
in the example from Section~\ref{sec:example_incompressible_limit}.}
\end{table}

For the following discussion we fix two meshes, one is the unstructured mesh with \(489\) triangles used before and the other one is a structured mesh with \(450\) triangles, see Figure~\ref{fig:meshes}.
Table~\ref{tab:example2_alpha2_meshcomparison} compares the velocity error on these two meshes for different choices
of \(c\). There are two interesting observations. First, the velocity errors of the gradient-robust scheme converge to zero
for \(c \rightarrow \infty\), while the errors of the classical scheme stagnates. Second, the velocity of the gradient-robust scheme is exact on structured meshes for every \(c\), while the classical scheme is not.

\begin{table}
{\footnotesize
\pgfplotstabletypeset[columns={c,L2uBRunstruct+,L2uBRunstruct,L2uBRstruct+,L2uBRstruct},
every head row/.style={before row={\toprule}, after row={\midrule}},
every last row/.style={after row={\bottomrule}},
columns/c/.style={int detect, column type={c}, column name=$c$,set thousands separator={}},
columns/L2uBRunstruct+/.style={precision=4,sci,sci 10e,sci zerofill, column name=$\| \nabla(\vecb{u} - \vecb{u}^+_h)\|_{L^2} \text{ (G1)} $},
columns/L2uBRstruct+/.style={precision=4,sci,sci 10e,sci zerofill, column name=$\| \nabla(\vecb{u} - \vecb{u}^+_h)\|_{L^2} \text{ (G2)} $},
columns/L2uBRunstruct/.style={precision=4,sci,sci 10e,sci zerofill, column name=$\| \nabla(\vecb{u} - \vecb{u}_h)\|_{L^2} \text{ (G1)} $},
columns/L2uBRstruct/.style={precision=4,sci,sci 10e,sci zerofill, column name=$\| \nabla(\vecb{u} - \vecb{u}_h)\|_{L^2} \text{ (G2)} $}
]\tableBgB
}

\caption{\label{tab:example2_alpha14_meshcomparison}Errors \(\| \nabla(\vecb{u} - \vecb{u}_h) \|_{L^2}\) of the classical and gradient-robust scheme computed on the two fixed grids from Figure~\ref{fig:meshes}
for \(\gamma=1.4\) and different choices of \(c\)
in the example from Section~\ref{sec:example_incompressible_limit}.}
\end{table}

\begin{table}
{\footnotesize
\pgfplotstabletypeset[columns={c,L2uBRunstruct+,L2uBRunstruct,L2uBRstruct+,L2uBRstruct},
every head row/.style={before row={\toprule}, after row={\midrule}},
every last row/.style={after row={\bottomrule}},
columns/c/.style={int detect, column type={c}, column name=$c$,set thousands separator={}},
columns/L2uBRunstruct+/.style={precision=4,sci,sci 10e,sci zerofill, column name=$\| \nabla(\vecb{u} - \vecb{u}^+_h)\|_{L^2} \text{ (G1)} $},
columns/L2uBRstruct+/.style={precision=4,sci,sci 10e,sci zerofill, column name=$\| \nabla(\vecb{u} - \vecb{u}^+_h)\|_{L^2} \text{ (G2)} $},
columns/L2uBRunstruct/.style={precision=4,sci,sci 10e,sci zerofill, column name=$\| \nabla(\vecb{u} - \vecb{u}_h)\|_{L^2} \text{ (G1)} $},
columns/L2uBRstruct/.style={precision=4,sci,sci 10e,sci zerofill, column name=$\| \nabla(\vecb{u} - \vecb{u}_h)\|_{L^2} \text{ (G2)} $}
]\tableBgC
}
\caption{\label{tab:example2_alpha1_meshcomparison}Errors \(\| \nabla(\vecb{u} - \vecb{u}_h) \|_{L^2}\) of the classical and gradient-robust scheme computed on the two fixed grids from Figure~\ref{fig:meshes}
for \(\gamma=1\) and different choices of \(c\)
in the example from Section~\ref{sec:example_incompressible_limit}.}
\end{table}

Table~\ref{tab:example2_alpha14_meshcomparison} 
and \ref{tab:example2_alpha1_meshcomparison} repeat this experiment for
\(\gamma=1.4\) and \(\gamma = 1\), respectively. Here the results are similar as for the case with \(\gamma=2\)
in the sense that the gradient-robust scheme is more accurate than the classical scheme. However, the gradient-robust variant is not exact on structured meshes in these cases which most likely is due to the non-constant vector \(\vecb{g}\).

\subsection{Well-balanced property}\label{sec:example_wellbalanced}
We repeat the experiment from the previous section, but this time we assume the right-hand sides
\begin{align}\label{eqn:rhs_wellbalanced}
 \vecb{f} = \gamma \varrho^{\gamma-1} \begin{pmatrix}0\\1\end{pmatrix} \quad \text{and} \quad \vecb{g} = 0.
 \end{align}
 Table~\ref{tab:example_wb_gamma1} displays the errors for \(\gamma=1\) and \(c=1\) for the classical and the
 gradient-robust scheme. Surprisingly, the novel gradient-robust scheme computes the exact velocity even
 on unstructured meshes. Also note, that the gradient-robust scheme converges after the first iteration, since
 the initial value based on the (rescaled) discrete pressure from incompressible Stokes problem is already the
 correct discrete density. Table~\ref{tab:example_wb_gamma14} leads to
 the same conclusions for for \(\gamma = 1.4\).
 
\pgfplotstableread[col sep=ampersand,row sep=\\]{
ndof & L2uBR+ &	 H1uBR+ & L2rhoBR+ & L2uBR &	 H1uBR & L2rhoBR \\
  161   & 7.5658e-14 & 	 1.5323e-12 & 	 6.2501e-02 & 1.2578e-03 & 	 1.8979e-02 & 	 6.4038e-02 \\
  617   & 1.3088e-16 & 	 5.2005e-15 & 	 2.9447e-02 & 3.4992e-04 & 	 1.1486e-02 & 	 2.9970e-02 \\
  2297  & 8.0186e-17 & 	 5.5293e-15 & 	 1.4785e-02 & 9.0438e-05 & 	 5.7741e-03 & 	 1.5006e-02 \\
 9152   & 8.2615e-17 & 	 1.1330e-14 & 	 7.5158e-03 & 2.2972e-05 & 	 3.0452e-03 & 	 7.6233e-03 \\
 36326  & 8.4962e-17 & 	 2.2574e-14 & 	 3.7484e-03 & 5.8270e-06 & 	 1.5099e-03 & 	 3.7979e-03 \\
}\tableCC

\pgfplotstableread[col sep=ampersand,row sep=\\]{
ndof & L2uBR+ &	 H1uBR+ & L2rhoBR+ & L2uBR &	 H1uBR & L2rhoBR \\
  161   & 6.9935e-17 &   1.2646e-15 &    6.2500e-02 & 8.9980e-04 &   1.3467e-02 &    6.3956e-02 \\
  617   & 6.6351e-17 &   2.4263e-15 &    2.9446e-02 & 2.4970e-04 &   8.1662e-03 &    2.9961e-02 \\
  2297  & 7.3217e-17 &   5.0692e-15 &    1.4785e-02 & 6.4488e-05 &   4.1437e-03 &    1.5005e-02 \\
 9152   & 7.3142e-17 &   1.0417e-14 &    7.5158e-03 & 1.6409e-05 &   2.1739e-03 &    7.6232e-03 \\
 36326  & 7.6058e-17 &   2.0748e-14 &    3.7484e-03 & 4.1974e-06 &   1.0860e-03 &    3.7978e-03 \\
}\tableCD

\begin{table}
{\footnotesize
\pgfplotstabletypeset[columns={ndof,L2uBR+,H1uBR+,L2rhoBR+,L2uBR,H1uBR,L2rhoBR},
every head row/.style={before row={\toprule}, after row={\midrule}},
every last row/.style={after row={\bottomrule}},
columns/ndof/.style={int detect, column type={c}, column name=\textsc{ndof},set thousands separator={}},
columns/L2uBR+/.style={precision=4,sci,sci 10e,sci zerofill, column name=$\| \vecb{u} - \vecb{u}^+_h\|_{L^2} $},
columns/H1uBR+/.style={precision=4,sci,sci 10e,sci zerofill, column name=$\| \nabla(\vecb{u} - \vecb{u}^+_h)\|_{L^2} $},
columns/L2rhoBR+/.style={precision=4,sci,sci 10e,sci zerofill, column name=$\| \rho - \rho^+_h\|_{L^2} $},
columns/L2uBR/.style={precision=4,sci,sci 10e,sci zerofill, column name=$\| \vecb{u} - \vecb{u}_h\|_{L^2} $},
columns/H1uBR/.style={precision=4,sci,sci 10e,sci zerofill, column name=$\| \nabla(\vecb{u} - \vecb{u}_h)\|_{L^2} $},
columns/L2rhoBR/.style={precision=4,sci,sci 10e,sci zerofill, column name=$\| \rho - \rho_h\|_{L^2} $}
]\tableCC
}
\caption{\label{tab:example_wb_gamma14}Errors of the modified gradient-robust scheme $(\vecb{u}_h^+,\rho^+)$ and the classical scheme $(\vecb{u}_h,\rho)$ for \(c=1\) and \(\gamma=1.4\) on unstructured grids with right-hand sides \eqref{eqn:rhs_wellbalanced}.}
\end{table}

\begin{table}
{\footnotesize
\pgfplotstabletypeset[columns={ndof,L2uBR+,H1uBR+,L2rhoBR+,L2uBR,H1uBR,L2rhoBR},
every head row/.style={before row={\toprule}, after row={\midrule}},
every last row/.style={after row={\bottomrule}},
columns/ndof/.style={int detect, column type={c}, column name=\textsc{ndof},set thousands separator={}},
columns/L2uBR+/.style={precision=4,sci,sci 10e,sci zerofill, column name=$\| \vecb{u} - \vecb{u}^+_h\|_{L^2} $},
columns/H1uBR+/.style={precision=4,sci,sci 10e,sci zerofill, column name=$\| \nabla(\vecb{u} - \vecb{u}^+_h)\|_{L^2} $},
columns/L2rhoBR+/.style={precision=4,sci,sci 10e,sci zerofill, column name=$\| \rho - \rho^+_h\|_{L^2} $},
columns/L2uBR/.style={precision=4,sci,sci 10e,sci zerofill, column name=$\| \vecb{u} - \vecb{u}_h\|_{L^2} $},
columns/H1uBR/.style={precision=4,sci,sci 10e,sci zerofill, column name=$\| \nabla(\vecb{u} - \vecb{u}_h)\|_{L^2} $},
columns/L2rhoBR/.style={precision=4,sci,sci 10e,sci zerofill, column name=$\| \rho - \rho_h\|_{L^2} $}
]\tableCD
}
\caption{\label{tab:example_wb_gamma1}Errors of the modified gradient-robust scheme $(\vecb{u}_h^+,\rho^+)$ and the classical scheme $(\vecb{u}_h,\rho)$ for \(c=1\) and \(\gamma=1\) on unstructured grids with right-hand sides \eqref{eqn:rhs_wellbalanced}.}
\end{table}

\bibliographystyle{amsplain}
\bibliography{references}

\end{document}